\documentclass[10pt]{amsart}

\usepackage{fullpage, palatino}
\usepackage{amsthm, amsfonts, amsmath}

\usepackage{pifont, tikz, subfigure}

\usetikzlibrary{plotmarks,shapes}
\long\def\greybox#1{%
    \newbox\contentbox%
    \newbox\bkgdbox%
    \setbox\contentbox\hbox to \hsize{%
        \vtop{
            \kern\columnsep
            \hbox to \hsize{%
                \kern\columnsep%
                \advance\hsize by -2\columnsep%
                \setlength{\textwidth}{\hsize}%
                \vbox{
                    \parindent=0bp
                    #1
                }%
                \kern\columnsep%
            }%
            \kern\columnsep%
        }%
    }%
    \setbox\bkgdbox\vbox{
        \pdfliteral{0.9 0.9 0.9 rg}
        \hrule width  \wd\contentbox %
               height \ht\contentbox %
               depth  \dp\contentbox
        \pdfliteral{0 0 0 rg}
    }%
    \wd\bkgdbox=0bp%
    \vbox{\hbox to \hsize{\box\bkgdbox\box\contentbox}}%
}


\newtheorem{theorem}{Theorem}
\newtheorem{lemma}[theorem]{Lemma}

\newtheorem{cor}[theorem]{Corollary}
\newtheorem{prop}[theorem]{Proposition}




\newcommand{\Sum}{\displaystyle\sum}

\newcommand{\y}[1]{\overline{Y}_{#1}}
\newcommand{\yq}[2]{\overline{Y}_{#1}(#2)}

\makeatletter
\def\testb#1{\testb@i#1,,\@nil}%
\def\testb@i#1,#2,#3\@nil{%
  \draw[->, thick] (O) --++(#1);
  \ifx\relax#2\relax\else\testb@i#2,#3\@nil\fi}
\makeatother   
\newcommand{\makediag}[1]{
    \coordinate (O) at (0,0); \coordinate (N) at (0,0.8);
    \coordinate (NE) at (0.8,0.8); \coordinate (E) at (0.8,0);
    \coordinate (SE) at (0.8,-0.8); \coordinate (S) at (0,-0.8);
    \coordinate (SW) at (-0.8,-0.8);\coordinate (W) at (-0.8,0);
    \coordinate (NW) at (-0.8,0.8); \coordinate (B1) at (1.2,1.2);
    \coordinate (B2) at (-1.2,-1.2);
    \testb{#1}
} 
\newcommand{\diagr}[1]{
  \begin{tikzpicture}[scale=0.3]\makediag{#1}\end{tikzpicture}
}

\newcommand{\mA}{\ensuremath{\mathcal{A}}}
\newcommand{\mB}{\ensuremath{\mathcal{B}}}
\newcommand{\mC}{\ensuremath{\mathcal{C}}}
\newcommand{\mD}{\ensuremath{\mathcal{D}}}
\newcommand{\mE}{\ensuremath{\mathcal{E}}}
\newcommand{\mS}{\ensuremath{\mathcal{S}}}
\newcommand{\pointplot}[1]{
    {\tiny
    \begin{tikzpicture}[scale=1.5]
		\draw [step=0.5,thin,gray!40] (-1.4,-1.4) grid (1.4,1.4);
		
		\draw [-] (-1.5,0) -- (1.5,0); 
		\draw [-] (0,-1.5) -- (0,1.5); 

		\draw [dashed,gray] (0,0) circle (1);

		\foreach \y in #1
		\node at \y [black,fill,circle,inner sep=0.8pt]  {};

		\foreach \x in {-1,-0.5,0.5,1}
  		\draw (\x,1pt) -- (\x,-3pt)
  		node[anchor=west] {\x};
		\foreach \y in {-1,-0.5,0.5,1}
  		\draw (1pt,\y) -- (-3pt,\y) ;

		\end{tikzpicture}
}
}

\newcommand{\pointplotT}[1]{
    \begin{tikzpicture}[scale=1.5]
    \draw [step=0.5,thin,gray!40] (-1.4,-0.6) grid (1.4,0.6);
    
    \draw [-] (-2,0) -- (2,0); 
    \draw [-] (0,-0.1) -- (0,0.1); 

    \draw [dashed,gray] (0,0) circle (0.5);

    \foreach \y in #1
    \node at \y [black,fill,circle,inner sep=0.8pt]  {};

    \foreach \x in {-0.5}
      \draw (\x,1pt) -- (\x,-3pt);
    \foreach \x in {0.5}
      \draw (\x,1pt) -- (\x,-3pt)
      node[anchor=north east] {\x};
    \foreach \y in {-0.5}
      \draw (1pt,\y) -- (-3pt,\y)
      node[anchor=north east] {\y};
    \foreach \y in {0.5}
      \draw (1pt,\y) -- (-3pt,\y)
      node[anchor=south east] {\y};
      
    \end{tikzpicture}
}

\newcommand{\aYpos}{(0.000000,-1.045090), (0.000000,1.045090), (-0.995454,-0.157278), (-0.995454,0.157278), (-0.965826,-0.313079), (-0.965826,0.313079), (-0.911199,-0.463257), (-0.911199,0.463257), (-0.832492,-0.603603), (-0.832492,0.603603), (-0.731465,-0.730075), (-0.731465,0.730075), (-0.610646,-0.838988), (-0.610646,0.838988), (-0.473210,-0.927165), (-0.473210,0.927165), (-0.322854,-0.992040), (-0.322854,0.992040), (-0.163667,-1.031731), (-0.163667,1.031731), (0.163667,-1.031731), (0.163667,1.031731), (0.322854,-0.992040), (0.322854,0.992040), (0.473210,-0.927165), (0.473210,0.927165), (0.610646,-0.838988), (0.610646,0.838988), (0.731465,-0.730075), (0.731465,0.730075), (0.832492,-0.603603), (0.832492,0.603603), (0.911199,-0.463257), (0.911199,0.463257), (0.965826,-0.313079), (0.965826,0.313079), (0.995454,-0.157278), (0.995454,0.157278)}

\newcommand{\bYpos}{(-0.990718,-0.135936), (-0.990718,0.135936), (-0.960249,-0.279143), (-0.960249,0.279143), (-0.922011,-0.387164), (-0.922011,0.387164), (-0.881366,-0.551551), (-0.881366,0.551551), (-0.812139,-0.665027), (-0.812139,0.665027), (-0.675248,-0.822584), (-0.675248,0.822584), (-0.580551,-0.892900), (-0.580551,0.892900), (-0.381520,-1.003236), (-0.381520,1.003236), (-0.284095,-1.032146), (-0.284095,1.032146), (-0.042675,-1.073245), (-0.042675,1.073245), (0.041777,-1.068741), (0.041777,1.068741), (0.298932,-1.025779), (0.298932,1.025779), (0.361040,-1.000152), (0.361040,1.000152), (0.602540,-0.868771), (0.602540,0.868771), (0.639741,-0.835693), (0.639741,0.835693), (0.833804,-0.623725), (0.833804,0.623725), (0.849787,-0.595279), (0.849787,0.595279), (0.969451,-0.322178), (0.969451,0.322178), (0.972401,-0.306467), (0.972401,0.306467)}

\newcommand{\cYpos}{(0.000246,-1.042826), (0.000246,1.042826), (0.962892,-0.312373), (0.962892,0.312373), (-1.001192,-0.157410), (-1.001192,0.157410), (-0.967813,-0.313551), (-0.967813,0.313551), (-0.926982,-0.469078), (-0.926982,0.469078), (-0.834491,-0.605005), (-0.834491,0.605005), (-0.752877,-0.747432), (-0.752877,0.747432), (-0.611342,-0.840205), (-0.611342,0.840205), (-0.494736,-0.960513), (-0.494736,0.960513), (-0.322499,-0.991780), (-0.322499,0.991780), (-0.178885,-1.085792), (-0.178885,1.085792), (0.162014,-1.114978), (0.162014,1.114978), (0.321763,-0.988265), (0.321763,0.988265), (0.481018,-1.059617), (0.481018,1.059617), (0.607701,-0.835038), (0.607701,0.835038), (0.713324,-0.897753), (0.713324,0.897753), (0.828508,-0.600997), (0.828508,0.600997), (0.888618,-0.626741), (0.888618,0.626741), (0.991201,-0.316281), (0.991201,0.316281)}

\newcommand{\aTpts}{(-1.792646,-0.475809), (-1.792646,0.475809), (-1.564355,-0.270893), (-1.564355,0.270893), (-1.502164,-0.388816), (-1.502164,0.388816), (-1.363577,-0.229962), (-1.363577,0.229962), (-1.214176,-0.299700), (-1.214176,0.299700), (-1.164466,-0.188088), (-1.164466,0.188088), (-1.131077,-0.135341), (-1.131077,0.135341), (-0.988324,-0.089109), (-0.988324,0.089109), (-0.981886,-0.110456), (-0.981886,0.110456), (-0.968285,-0.144650), (-0.968285,0.144650), (-0.931603,-0.206332), (-0.931603,0.206332), (-0.870205,-0.072438), (-0.870205,0.072438), (-0.835856,-0.084399), (-0.835856,0.084399), (-0.795809,-0.050983), (-0.795809,0.050983), (-0.778091,-0.098368), (-0.778091,0.098368), (-0.776887,-0.321797), (-0.776887,0.321797), (-0.755352,-0.054881), (-0.755352,0.054881), (-0.743183,-0.037683), (-0.743183,0.037683), (-0.702097,-0.038263), (-0.702097,0.038263), (-0.695977,-0.056404), (-0.695977,0.056404), (-0.665457,-0.102785), (-0.665457,0.102785), (-0.664694,-0.028007), (-0.664694,0.028007), (-0.646613,-0.035989), (-0.646613,0.035989), (-0.637237,-0.021251), (-0.637237,0.021251), (-0.616386,-0.016580), (-0.616386,0.016580), (-0.614351,-0.024623), (-0.614351,0.024623), (-0.603843,-0.046657), (-0.603843,0.046657), (-0.591967,-0.017700), (-0.591967,0.017700), (-0.575728,-0.013203), (-0.575728,0.013203), (-0.571026,-0.025449), (-0.571026,0.025449), (-0.563533,-0.010139), (-0.563533,0.010139), (-0.554121,-0.007970), (-0.554121,0.007970), (-0.551674,-0.015509), (-0.551674,0.015509), (-0.546692,-0.006386), (-0.546692,0.006386), (-0.539312,-0.010185), (-0.539312,0.010185), (-0.530927,-0.007061), (-0.530927,0.007061), (-0.524975,-0.005102), (-0.524975,0.005102), (-0.520595,-0.003809), (-0.520595,0.003809), (-0.517276,-0.002920), (-0.517276,0.002920), (-0.514702,-0.002289), (-0.514702,0.002289), (-0.512664,-0.001828), (-0.512664,0.001828), (-0.511023,-0.001483), (-0.511023,0.001483), (0.511023,-0.001483), (0.511023,0.001483), (0.512664,-0.001828), (0.512664,0.001828), (0.514702,-0.002289), (0.514702,0.002289), (0.517276,-0.002920), (0.517276,0.002920), (0.520595,-0.003809), (0.520595,0.003809), (0.524975,-0.005102), (0.524975,0.005102), (0.530927,-0.007061), (0.530927,0.007061), (0.539312,-0.010185), (0.539312,0.010185), (0.546692,-0.006386), (0.546692,0.006386), (0.551674,-0.015509), (0.551674,0.015509), (0.554121,-0.007970), (0.554121,0.007970), (0.563533,-0.010139), (0.563533,0.010139), (0.571026,-0.025449), (0.571026,0.025449), (0.575728,-0.013203), (0.575728,0.013203), (0.591967,-0.017700), (0.591967,0.017700), (0.603843,-0.046657), (0.603843,0.046657), (0.614351,-0.024623), (0.614351,0.024623), (0.616386,-0.016580), (0.616386,0.016580), (0.637237,-0.021251), (0.637237,0.021251), (0.646613,-0.035989), (0.646613,0.035989), (0.664694,-0.028007), (0.664694,0.028007), (0.665457,-0.102785), (0.665457,0.102785), (0.695977,-0.056404), (0.695977,0.056404), (0.702097,-0.038263), (0.702097,0.038263), (0.743183,-0.037683), (0.743183,0.037683), (0.755352,-0.054881), (0.755352,0.054881), (0.776887,-0.321797), (0.776887,0.321797), (0.778091,-0.098368), (0.778091,0.098368), (0.795809,-0.050983), (0.795809,0.050983), (0.835856,-0.084399), (0.835856,0.084399), (0.870205,-0.072438), (0.870205,0.072438), (0.931603,-0.206332), (0.931603,0.206332), (0.968285,-0.144650), (0.968285,0.144650), (0.981886,-0.110456), (0.981886,0.110456), (0.988324,-0.089109), (0.988324,0.089109), (1.131077,-0.135341), (1.131077,0.135341), (1.164466,-0.188088), (1.164466,0.188088), (1.214176,-0.299700), (1.214176,0.299700), (1.363577,-0.229962), (1.363577,0.229962), (1.502164,-0.388816), (1.502164,0.388816), (1.564355,-0.270893), (1.564355,0.270893), (1.792646,-0.475809), (1.792646,0.475809)}

\newcommand{\bTpts}{(-1.102612,-0.356765), (-1.102612,0.356765), (-1.101997,-0.251891), (-1.101997,0.251891), (-1.092032,-0.192632), (-1.092032,0.192632), (-0.868038,-0.111907), (-0.868038,0.111907), (-0.834149,-0.128396), (-0.834149,0.128396), (-0.779048,-0.147243), (-0.779048,0.147243), (-0.715408,-0.063618), (-0.715408,0.063618), (-0.677958,-0.157780), (-0.677958,0.157780), (-0.665298,-0.061524), (-0.665298,0.061524), (-0.652191,-0.036453), (-0.652191,0.036453), (-0.610282,-0.030897), (-0.610282,0.030897), (-0.597093,-0.047948), (-0.597093,0.047948), (-0.579312,-0.016495), (-0.579312,0.016495), (-0.560451,-0.016338), (-0.560451,0.016338), (-0.546631,-0.000000), (-0.546631,0.000000), (-0.543689,-0.000000), (-0.543689,0.000000), (-0.538316,-0.000000), (-0.538316,0.000000), (-0.530747,-0.000000), (-0.530747,0.000000), (-0.529629,-0.000000), (-0.529629,0.000000), (-0.520318,-0.000000), (-0.520318,0.000000), (-0.514646,-0.000000), (-0.514646,0.000000), (-0.511022,-0.000000), (-0.511022,0.000000), (-0.508584,-0.000000), (-0.508584,0.000000), (0.543078,-0.007718), (0.543078,0.007718), (0.549565,-0.009565), (0.549565,0.009565), (0.557667,-0.012067), (0.557667,0.012067), (0.567982,-0.015553), (0.567982,0.015553), (0.581415,-0.020577), (0.581415,0.020577), (0.599395,-0.028127), (0.599395,0.028127), (0.624285,-0.040106), (0.624285,0.040106), (0.660205,-0.060534), (0.660205,0.060534), (0.714694,-0.099070), (0.714694,0.099070), (0.718646,-0.046049), (0.718646,0.046049), (0.761804,-0.061377), (0.761804,0.061377), (0.801280,-0.183399), (0.801280,0.183399), (0.820618,-0.085254), (0.820618,0.085254), (0.904298,-0.125324), (0.904298,0.125324), (0.927958,-0.413327), (0.927958,0.413327), (1.029963,-0.200026), (1.029963,0.200026), (1.229972,-0.362815), (1.229972,0.362815), (1.360404,-0.293904), (1.360404,0.293904)}

\newcommand{\cTpts}{(-1.507645,-0.388545), (-1.507645,0.388545), (-1.401604,-0.477921), (-1.401604,0.477921), (-1.365585,-0.230394), (-1.365585,0.230394), (-1.069349,-0.241910), (-1.069349,0.241910), (-1.058006,-0.318612), (-1.058006,0.318612), (-1.002789,-0.443909), (-1.002789,0.443909), (-0.988497,-0.091085), (-0.988497,0.091085), (-0.982140,-0.112993), (-0.982140,0.112993), (-0.968676,-0.148218), (-0.968676,0.148218), (-0.932126,-0.212381), (-0.932126,0.212381), (-0.844007,-0.127735), (-0.844007,0.127735), (-0.799307,-0.143598), (-0.799307,0.143598), (-0.795678,-0.053210), (-0.795678,0.053210), (-0.773719,-0.337964), (-0.773719,0.337964), (-0.755078,-0.057640), (-0.755078,0.057640), (-0.733231,-0.066423), (-0.733231,0.066423), (-0.725873,-0.155789), (-0.725873,0.155789), (-0.695338,-0.059895), (-0.695338,0.059895), (-0.690432,-0.066100), (-0.690432,0.066100), (-0.664444,-0.029963), (-0.664444,0.029963), (-0.633544,-0.060184), (-0.633544,0.060184), (-0.632170,-0.036756), (-0.632170,0.036756), (-0.616201,-0.017985), (-0.616201,0.017985), (-0.613934,-0.026712), (-0.613934,0.026712), (-0.602154,-0.050628), (-0.602154,0.050628), (-0.598326,-0.135000), (-0.598326,0.135000), (-0.587012,-0.029999), (-0.587012,0.029999), (-0.575473,-0.014534), (-0.575473,0.014534), (-0.561192,-0.017282), (-0.561192,0.017282), (-0.559248,-0.039870), (-0.559248,0.039870), (-0.553962,-0.008862), (-0.553962,0.008862), (-0.551047,-0.017203), (-0.551047,0.017203), (-0.545428,-0.010923), (-0.545428,0.010923), (-0.535714,-0.016685), (-0.535714,0.016685), (-0.530654,-0.007932), (-0.530654,0.007932), (-0.523513,-0.008523), (-0.523513,0.008523), (-0.520460,-0.004312), (-0.520460,0.004312), (-0.516572,-0.004930), (-0.516572,0.004930), (-0.514629,-0.002604), (-0.514629,0.002604), (-0.512283,-0.003105), (-0.512283,0.003105), (-0.510980,-0.001692), (-0.510980,0.001692), (-0.250000,-0.433013), (-0.250000,0.433013), (0.528634,-0.459518), (0.528634,0.459518), (0.542366,-0.016347), (0.542366,0.016347), (0.546892,-0.005201), (0.546892,0.005201), (0.548177,-0.020332), (0.548177,0.020332), (0.554383,-0.006513), (0.554383,0.006513), (0.555131,-0.025707), (0.555131,0.025707), (0.563456,-0.033104), (0.563456,0.033104), (0.563882,-0.008322), (0.563882,0.008322), (0.573348,-0.043488), (0.573348,0.043488), (0.576206,-0.010893), (0.576206,0.010893), (0.584852,-0.058337), (0.584852,0.058337), (0.592642,-0.014695), (0.592642,0.014695), (0.597588,-0.079840), (0.597588,0.079840), (0.603566,-0.316753), (0.603566,0.316753), (0.610292,-0.111078), (0.610292,0.111078), (0.615340,-0.020605), (0.615340,0.020605), (0.620185,-0.156215), (0.620185,0.156215), (0.621924,-0.221284), (0.621924,0.221284), (0.648141,-0.030418), (0.648141,0.030418), (0.688519,-0.104503), (0.688519,0.104503), (0.698526,-0.048291), (0.698526,0.048291), (0.711329,-0.133775), (0.711329,0.133775), (0.736145,-0.172238), (0.736145,0.172238), (0.744064,-0.032399), (0.744064,0.032399), (0.763153,-0.222879), (0.763153,0.222879), (0.782907,-0.085688), (0.782907,0.085688), (0.793166,-0.291202), (0.793166,0.291202), (0.797055,-0.044219), (0.797055,0.044219), (0.826939,-0.388983), (0.826939,0.388983), (0.872082,-0.063473), (0.872082,0.063473), (0.943078,-0.184280), (0.943078,0.184280), (0.975192,-0.382773), (0.975192,0.382773), (0.984990,-0.097967), (0.984990,0.097967), (1.170447,-0.169316), (1.170447,0.169316), (1.517350,-0.356979), (1.517350,0.356979), (1.571559,-0.247700), (1.571559,0.247700)}


\newcommand{\EM}[1]{}

\begin{document}
\title{Singularity analysis via the iterated kernel method}
\author{Stephen Melczer \and Marni Mishna}
\address{Department of Mathematics\\ Simon Fraser University\\
Burnaby, Canada, V5A 1S6}
\thanks{This work was supported by NSERC via a Discovery Grant, a
    USRA fellowship, and a Michael Smith Foreign Study Supplement. SM
    was also partially supported by funding from the Office for Science and
    Technology at the Embassy of France in Canada.
}

\begin{abstract}
  In the quarter plane, five lattice path models with unit steps have
  resisted the otherwise general approach featured in recent
  works by Fayolle, Kurkova and Raschel. Here we consider these five
  models, called the singular models, and prove that the univariate
  generating functions marking the number of walks of a given length
  are not D-finite.  Furthermore, we provide exact and asymptotic
  enumerative formulas for the number of such walks, and describe an
  efficient algorithm for exact enumeration.
\end{abstract}

\keywords{Lattice path enumeration, D-finite, generating function, singularities}
\maketitle

\centerline{\em Dedicated to the remarkable Philippe Flajolet.}

\section{Introduction}
Lattice path models are classical objects, appearing very naturally in
a variety of probabilistic and combinatorial contexts. Recent work has
shown how they can help us better understand generating functions in a more general
setting by addressing the question of predicting when the generating
function of a combinatorial class will satisfy a `nice' differential
equation.  Lattice path models restricted to the quarter plane have
proved to be very useful in this regard -- they offer a family of
generating functions which are straightforward to manipulate, yet
which possess some surprising structure.  This, in turn, has led to
some useful innovations in enumeration, including applications of
boundary value methods~\cite{FaIaMa99, FaRa10, KuRa11}, powerful and
widely applicable variants of the kernel method~\cite{Bous05, BoMi10,
  MiRe09}, original computer algebra approaches~\cite{BoKa09,
  KaKoZe09}, and some freshened restatements of classic number theory
results~\cite{BoRaSa13}.

As has been remarked upon previously~\cite{Chri90,FlGeSa04}, the
property of being D-finite, that is, of satisfying a linear
differential equation with polynomial coefficients, is exceptional and
not expected of an arbitrary function. Indeed, in the case of
combinatorial generating functions, the property of being D-finite
appears to correlate with rich structure in the corresponding class --
structure which we have yet to fully uncover.  What can we learn from
lattice path models?

A key observation of Bousquet-M\'elou and Mishna \cite{BoMi10} was that lattice path
models with small steps restricted to the quarter plane appeared to be
naturally partitioned according to the nature of their generating
functions: specifically, they observed that D-finiteness of a model's
generating function appeared to be correlated to the finiteness of a
group of plane transformations derived from the set of
allowable steps. Furthermore, in two dimensions this property is
further correlated with more combinatorial qualities of the step set:
for example, symmetry across an axis or rotational symmetry, but
\emph{not} $x\leftrightarrow y$ symmetry.  In some cases the
explanation is well understood, such as Theorem~1
in~\cite{BoPe03}. When the drift (that is, the vector sum of the
allowable directions) is zero, Fayolle and Raschel~\cite{FaRa11}
describe an arithmetic condition which begs a more combinatorial
interpretation. Of the 79 non-isomorphic models, 23 are well studied
with D-finite generating functions and 51 are highly suspected to be
non D-finite: Kurkova and Raschel~\cite{KuRa11} proved that the
trivariate generating functions marking endpoint are not D-finite by solving 
related boundary value problems, and
Bostan, Raschel and Salvy~\cite{BoRaSa13} proved the excursion (walks
returning to the origin) generating functions are not D-finite via an
argument on the asymptotics of the coefficients. The remaining five
models are called \emph{singular\/} and resist both these strategies
for different reasons. (For example, the excursion generating function
is trivally 1 in these cases). Two of these models were previously
considered~\cite{MiRe09}, and both (univariate) generating functions
were proven to be non D-finite. We apply this strategy to the final three
models: it is an application of the
\emph{iterated kernel method} inspired by Bousquet-M\'elou and
Petkov\v sek~\cite{BoPe03}, and Janse van Rensburg, Prellberg and
Rechnitzer~\cite{JaPrRe08}.

Specifically, the present work proves that the three remaining cases are
not D-finite by analyzing an explicit generating
function -- asymptotic analysis and rapid exact enumeration are
applications of this expression.  In each case, we show that the number
of singularities is infinite and far enough away from the dominating
pole that they do not affect the first order asymptotics.  The
(essentially technical) challenge, as was the case in~\cite{MiRe09},
is the justification that these singularities are true poles and are
not somehow canceled by a quirk of the expression. This is significant
because a D-finite function has a finite number of singularities, and
so such a demonstration is a proof of non D-finiteness of the
generating function. We diverge from~\cite{MiRe09} slightly and use a
parameterization with its origins in the method of~\cite{FaIaMa99}, as
this yields a simpler process.  In the course of our proofs we revisit
some older theorems on polynomials that a reader faced with a similar
problem may find useful. For a more general combinatorial discussion
about why these models have D-finite generating functions, we direct
the reader to~\cite{MiRe09}.

In summary, for each of the five singular models we take a unified
approach to prove formulas for asymptotic enumeration and determine
an explicit expression for the generating function, information which 
cannot be determined using other known methods.  In addition, we
prove that the (univariate) counting generating function is not
D-finite for the five models.

\subsection{The family of singular models}
A lattice path model is defined by a set of vectors -- the allowable
directions in which one can move along the sublattice $\mathbb{N}^2$~of~$\mathbb{Z}^2$. We are initially interested in models which permit only
``small'' steps; that is, the steps are contained in $\{0, +1,
-1\}^2$.  We use the notation ${\sf NW}\equiv(-1,1),{\sf N}\equiv(0,1),
{\sf NE}\equiv(1,1)$, etc. The family of singular models consists of the
following five models:

\begin{center}\small
\greybox{\begin{tabular}{lllllllll}
$\mA=$ \diagr{NW,NE,SE}$\/=\{ \sf NW, NE, SE \}$&
$\mB=$ \diagr{NW,N,E,SE}$\/=\{ \sf NW, N, E, SE \}$&
$\mC=$ \diagr{NW,N,NE,E,SE}$\/=\{ \sf NW, N, NE, E, SE \}$\\[2mm]
$\mD=$ \diagr{NW,N,SE} $\/=\{ \sf NW, N, SE \}$& 
$\mE=$ \diagr{NW,N,NE,SE}$\/=\{ \sf NW, N, NE, SE \}$&
\end{tabular}}
\end{center}

Models~$\mA$ and~$\mD$ are the two models considered by Mishna and
Rechnitzer, and their strategy, known as the iterated kernel method,  extends to all of these models. Note that
the present work corrects an analytical error found
in~\cite{MiRe09}, which does not substantially change the stated
results.

For each model $\mS\in\{\mA,\mB,\mC,\mD,\mE\}$ we address the following:
\begin{itemize} \setlength{\itemsep}{0mm} \sl
\item[\ding{172}] What is the number~$S_n$ of walks of length~$n$ beginning at the origin and staying in $\mathbb{N}^2$?
\item[\ding{173}] How does~$S_n$ grow asymptotically when~$n$ is large?
\item[\ding{174}] Is the generating function $S(t)=\sum_n S_n t^n$ D-finite? 
\end{itemize}
The approach is to give an explicit expression for the generating
function via the iterated kernel method, which entails describing
a functional equation for a multivariate generating function,
isolating its kernel. We generate a telescoping sum using a prescribed
sequence of pairs which annihilate the kernel. The derived expression
is useful to deduce asymptotic information and to demonstrate the
source of an infinite collection of singularities. 

The next section describes how to obtain generating function
expressions. This is followed by the asymptotic analysis and non
D-finiteness proofs for the symmetric models, and we conclude with a
summary of the analysis of the asymmetric models.

\begin{table}\center
\begin{tabular}{c|ll}
Model & First 10 Terms in the counting sequence &(OEIS Tag) \\ \hline
\mA & $1, 1, 3, 7, 21, 55, 165, 457, 1371, 3909, 11727$ &(A151267) \\
\mB &  $1, 2, 6, 20, 70, 254, 942, 3550, 13532, 52030, 201386$ &(A151284)\\
\mC & $1, 3, 13, 59, 279, 1341, 6527, 31995, 157659, 779601, 3864985$ &(A151321) \\
\mD & $1, 1, 2, 4, 10, 23, 61, 153, 418, 1100, 3064$ &(A151256)  \\
\mE & $1, 2, 7, 24, 91, 339, 1316, 5064, 19876, 77655, 306653$
&(A151294) \\[2mm]
\end{tabular}\\
\label{tab:terms}
\caption{The initial terms in the counting sequences for the number of
walks of a given length with steps from the given model, restricted to
the quarter plane. The OEIS Tag refers to the corresponding entry
in the Online Encyclopedia of Integer Sequences~\cite{OEIS}}
\end{table}

\section{An explicit expression for the generating function}
\subsection{The functional equation and its kernel}
Our central mathematical object is the multivariate
generating function $S_{x,y}(t)=\sum_{i,j,n}s_{ij}(n) x^iy^j t^n$, where
$s_{ij}(n)$ counts the number of walks of length $n$ with steps from $\mS$ which begin at the origin, end at the point $(i,j)$, and stay in $\mathbb{N}^2$. (Throughout, \mS~is our generic step set.) Our goal is to determine properties of $S(t)\equiv S_{1,1}(t)$, the generating function for the number of walks in the quarter plane.

To each of the five step sets, we associate a polynomial called the
\emph{kernel}; for the step set $\mS$, define
\[ K_S(x,y)=xy -txy \sum_{(i,j)\in \mS} x^iy^j.\] As we restrict
ourselves to small steps, the inventory of the steps has the form
\begin{equation}
 \sum_{(i,j)\in \mS} x^i y^j = xP_1(y) + P_0(y) + \frac{1}{x}
P_{-1}(y) = yQ_1(x) + Q_0(x) + \frac{1}{y} Q_{-1}(x). 
\end{equation}
Thus, $K_S(x,y)$ can be regarded as a quadratic in $y$ (respectively $x$)
whose coefficients contain $t$, $x$ and the $Q_i(x)$ (resp. $t$, $y$, and
$P_i(y)$):
\begin{equation}\label{eq:kernel-as-quadratic}
K_S(x,y)= -xtQ_1(x)\,y^2 + \left(x-txQ_0(x)\right)\,y - xtQ_{-1}(x).
\end{equation}
When the model is clear, we omit the subscript $\mS$. One common
property of the singular models is that they contain the steps ${\sf
  NW}$ and ${\sf SE}$, and at least one other step, which prevents degeneracy in the quadratic. 

Each model admits a functional equation for $S_{x,y}(t)$. We apply the common
decomposition that a walk is either the empty walk, or a shorter walk
followed by a single step. Taking into account the restrictions on walk location, as well as the fact that substituting $x=0$ (respectively $y=0$) into the function $S_{x,y}(t)$ gives the generating function of walks ending on the $y$-axis (respectively $x$-axis), we obtain, as many others have before us,  the functional equation 
\begin{equation}\label{eqn:fund}
K(x,y)S_{x,y}(t) = xy+K(x,0)S_{x,0}(t)+K(0,y)S_{0,y}(t). 
\end{equation}

We are interested in the solutions to the kernel equation of the form:
\begin{equation}
K(x, Y_+(x;t))=K(x, Y_-(x;t))=K(X_+(y;t), y)=K(X_-(y;t), y)=0,
\end{equation} 
and these algebraic functions are easily determined since the kernel is a quadratic:
\begin{align}
Y_{\pm}(x;t) &= \frac{ (1 - t Q_0(x)) \mp \sqrt{ \left(Q_0(x)^2-4Q_1(x)Q_{-1}(x)\right) t^2 -2Q_0(x)t + 1} }{2tQ_1(x)}    \\
X_{\pm}(y;t) &= \frac{ (1 - t P_0(y)) \mp \sqrt{ \left(P_0(y)^2-4P_1(y)P_{-1}(y)\right) t^2 -2P_0(y)t + 1} }{2tP_1(y)}.
\end{align}
There are other function pairs which annihilate the kernel, as we shall see. Remark that the boundary value method begins as we have, with the functional equation~\eqref{eqn:fund}, but ultimately uses a different parametrization for the roots of the kernel, and from there a very different means to get access to the generating function.

The generating function has a natural expression in terms of iterated compositions of the $Y$ and $X$, hence the name \emph{iterated kernel method}.

\subsection{Summary: The Symmetric Models} 

We define the sequence of functions $Y_i$ by the recurrence
$Y_{n+1}(x)=Y_n(Y_+(x;t))$ with base case $Y_0(x)=x$. Remark that
$Y_1\equiv Y_+$.  In Section~\ref{sec:symmodels} we show that if $\mS$
is a model symmetric about the line $x=y$, i.e. $\mS\in\{ \mA, \mB,
\mC\}$, then one has the explicit form 

\greybox{
\begin{equation}\label{eq:sym}
 S(t) = \frac{1}{1-|\mS|t}\left(1-2\sum_{n=0}^\infty (-1)^{n} Y_n(1)Y_{n+1}(1)\right).  
\end{equation}
}

Theorem~\ref{thm:dom-sing} gives the first order
asymptotics of the symmetric models, extracted from this
expression. Table~\ref{tab:polys} provides polynomial equations that are
satisfied by the remaining poles.

Section~\ref{sec:not-dfinite-sym} outlines the proof that these models
are not D-finite. For each model we provide an infinite family of
poles, which is sufficient because D-finite series have at most a
finite number of poles.  We prove that an infinite collection of $Y_n(1)$ contribute
singularities; the main difficulty lies in ensuring that the
singularities in the terms are genuinely singularities of the sum.

\subsection{Summary: The Asymmetric Models}
The remaining two models are treated similarly, but require a bit more work. Here we require two function sequences:
 \begin{equation*}
 \chi_n(x) = X_+ (Y_+ (\chi_{n-1}(x))), \quad \chi_0(x)=x \qquad\text{and}\qquad
 \Upsilon_n(y) = Y_+ (X_+(\Upsilon_{n-1}(x))),  \quad \Upsilon_0(y)=y.
\end{equation*}

We show in Section~\ref{sec:asymmodels} that the generating function for the asymmetric walks is

\greybox{
{\small \begin{equation}\label{eq:assym}
S(t) = \frac{1}{1-|\mS|t}\left(1
-t\Sum_{n\geq 0}\chi_n(1)\cdot\left(Y_+(\chi_n(1))-Y_+(\chi_{n-1}(1))\right)
-t\Sum_{n\geq 0}X_+(\Upsilon_n(1))\cdot\left( \Upsilon_n(1)-\Upsilon_{n+1}(1)\right)\right).
\end{equation}
}
}

Of course, this expression is also valid for the symmetric models. In
this case, $X_+=Y_+$ and expression~\eqref{eq:assym} reduces to equation~\eqref{eq:sym}

The asymptotics are considered in Section~\ref{sec:aasym}, and D-finiteness results are considered in Section~\ref{sec:anonD}.

\subsection{What makes this family special?}
Consider the lowest order terms of the roots of the
kernel as a power series in $t$. They are 
\[Y_+ = P_{-1}(x)t + O(t^2) \qquad \mbox{ and } \qquad Y_{-} =
\frac{1}{tP_1(x)}-\frac{P_0(x)}{P_1(x)} + O(t),\] where $P_r(x) =
\sum_{(i,r)\in\mS}x^i$.  Of the 56 (conjectured) non D-finite models
only 5 models, precisely the singular family we are studying,
have a lowest order term with a positive power in $x$ and $t$, implying that the infinite sum obtained by the
iterated kernel method converges.  This prevents the method from being
applied to a broader range of models in this context. 

\subsection{Fast enumeration} The focus of the present work is to
prove that the generating functions of singular models are not
D-finite. Nonetheless, we should not lose sight of the fact that our
generating function expressions are also useful for enumeration.

In fact, we can use the series expression to generate the
first $N$ terms of $S_{1,0}(t)$ and $S_{1,1}(t)$ for each model with
$\tilde O\left(N^3\right)$ bit-complexity (where the notation $\tilde
O(\cdot)$ suppresses logarithmic factors), which is an order of
magnitude faster than the $\tilde O\left(N^4\right)$ bit-complexity of
the naive generation algorithm.  The key fact which gives this speedup
is that we can form a linear recurrence for $1/Y_n$ (see
Table~\ref{tab:sym-rec}). To be more specific, we define $Z_n :=
1\left/ \left(Y_n/t^n\right) \right.$ and:
\begin{enumerate}
  \item Generate $Z_0$ and $Z_1$ to precision $2N$ 
  \item Use the recurrences in Table~\ref{tab:sym-rec} to form a linear recurrence for $Z_n$
  \item Exploit this recurrence to generate $Z_2,\dots, Z_{\lfloor \frac{N}{2} \rfloor}$ to precision $2N$ using only shifts and additions
  \item Recover $Y_n = t^n/Z_n \mod t^N$ for $n=1,\dots,\lfloor \frac{N}{2} \rfloor$.
\end{enumerate}
The series expressions then allows us to generate the first $N$ terms
of the generating function.  The cost of generating these terms is
dominated by the inversion of the $Z_n$, which have summands whose
bit-size grows linearly.  

Although the generating functions are not D-finite, and hence the
coefficients do not satisfy a fixed length linear recurrence with polynomial
coefficients, we are able to generate the terms in a more efficient
manner. In order to generate a large number of
terms, one may wish to use modular methods in order to prevent a
memory overflow.

\section{Symmetric models: $\mA, \mB, \mC$}
\label{sec:symmodels}

\subsection{An explicit generating function expression}
We focus first on the three models \mA, \mB~and \mC, as these
models are symmetric about the line $x=y$.  As such, these models benefit from
the relation $S_{x,0}=S_{0,x}$, and Equation~\eqref{eqn:fund} can be
rewritten as
\begin{equation}\label{eqn:fund-sym}
K(x,y)S_{x,y}(t) = xy+K(0,y)S_{0,y}(t)+K(0,x)S_{0,x}(t).  
\end{equation}

Our iterates satisfy $Y_{n+1}(x)=Y_+(Y_n(x)),\, Y_0(x)=x$, and hence we see
 \[K(Y_n, Y_{n+1})=K(Y_n(x), Y_+(Y_n(x)))=0\]
for all $n$ by substituting $x=Y_n(x)$ into the kernel relation $K(x, Y_+(x))=0$.  Thus, when we make this substitution into Equation~\eqref{eqn:fund-sym} we find for each $n$:
\[ 0 = Y_n(x)Y_{n+1}(x) + K(0,Y_{n+1}(x))S_{0,Y_{n+1}(x)}(t) + K(0,Y_n(x))S_{0,Y_n(x)}(t).\]
We can determine an expression for  $K(0,x)S_{0,x}(t)$ by taking an alternating sum of these equations since all of the $ K(0,Y_n(x))S_{0,Y_n(x)}(t)$ terms are canceled for $n>0$ in a telescoping sum:
\begin{align*}
0&=\sum_{n=0}^\infty (-1)^n \Bigl(Y_n(x)Y_{n+1}(x)
  +K(0,Y_{n+1}(x))S_{0,Y_{n+1}(x)}(t) + K(0,Y_n)S_{0,Y_n(x)}(t)\Bigr)\\
&=K(0, x) S_{0,x}(t)+ \sum_{n=0}^\infty (-1)^n Y_n(x)Y_{n+1}(x) .
\end{align*}
We rearrange this and evaluate at $x=1$ to express the counting generating function for walks returning to the axis:
\begin{equation}\label{eq:S01-sym}
S_{0,1}(t) = \frac{1}{t}\sum_{n=0}^\infty (-1)^{n}  Y_n(1)Y_{n+1}(1), 
\end{equation}
as $K(0, 1) = -t$ for each case considered here. This converges as a power series because in each of these cases $Y_n(x)=O(t^n)$. 

Furthermore, substituting $x=1$ and $y=1$ into Equation~\eqref{eqn:fund-sym} gives the full counting generating function
\begin{equation}\label{eq:S11-sym}
  S(t) = \frac{1-2tS_{0,1}(t)}{1-t|\mS|} = \frac{1}{1-t |S|} \left(1 - 2 \sum_n (-1)^{n}  Y_n(1)Y_{n+1}(1) \right).
\end{equation}
We address the robustness of this expression as a complex function in Theorem~\ref{thm:converge}, after 
we determine an explicit expression for $Y_n(1)$ as a rational function of $Y_1(1)$. 

\subsection{Asymptotic Enumeration}
Next we show
that the sum $\sum_n (-1)^{n} Y_n(1)Y_{n+1}(1)$ is convergent at
$t=\frac{1}{|\mS|}$, and that its radius of convergence is bounded
below by $t=\frac{1}{p_0+2\sqrt{p_1p_{-1}}}$ where $p_i =
P_i(1)=|\{(i,r): -1\leq r \leq 1, (i,r)\in\mS\}|$.  In our three symmetric models,
the singularity of $S(t)$ at $\frac{1}{|\mS|}$ is thus dominant.
\begin{theorem} \label{thm:dom-sing} 
For each model $\mS$ in $\{\mA, \mB, \mC\}$, the number of walks
  $S_n=[t^n]S(t)$ grows asymptotically like
\begin{equation}\label{eq:sym_asmpt}
S_n\sim \kappa_S \left(\frac{1}{|\mS|}\right)^n +O\left( \left(
    p_0+2\sqrt{p_1p_{-1}} \right)^n \right),
\end{equation}
  where each $\kappa_S$ is a constant which can be calculated to arbitrary precision using Equation~\eqref{eq:S01-sym}.
\end{theorem}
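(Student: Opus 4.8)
The plan is to read the asymptotics straight off the closed form \eqref{eq:S11-sym}: write $S(t)=g(t)/(1-t|\mS|)$ with $g(t)=1-2\sum_{n}(-1)^nY_n(1)Y_{n+1}(1)$, and treat the two factors separately. The rational factor contributes a simple pole at $t=1/|\mS|$, while $g$ will turn out to be analytic on a strictly larger disk; hence $1/|\mS|$ is the unique dominant singularity and the standard transfer theorems yield the estimate.

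First I would pin down the disk of analyticity of $g$. Each $Y_n(1)$ is obtained by iterating $Y_+(\,\cdot\,;t)$, whose only finite singularity in $t$ is the branch point where the discriminant $(Q_0(1)^2-4Q_1(1)Q_{-1}(1))t^2-2Q_0(1)t+1$ vanishes; for the three symmetric models $Q_r(1)=P_r(1)=p_r$, and the smallest positive such root is $R:=1/(p_0+2\sqrt{p_1p_{-1}})$. Together with the valuation bound $Y_n(1)=O(t^n)$, which forces geometric decay of the summands and hence locally uniform convergence, this shows $g$ is holomorphic on $|t|<R$ — exactly the content of the forthcoming Theorem~\ref{thm:converge}. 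The separation of scales is then an AM--GM computation: $|\mS|=p_0+p_1+p_{-1}>p_0+2\sqrt{p_1p_{-1}}=1/R$, strictly because $p_1\neq p_{-1}$ in each of \mA, \mB, \mC, so that $1/|\mS|<R$ and the pole of the rational factor lies strictly inside the disk on which $g$ is analytic.

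With $1/|\mS|$ identified as the dominant singularity — a simple pole provided $\kappa_S:=g(1/|\mS|)\neq 0$ — a residue computation gives $S_n\sim\kappa_S|\mS|^n$, where by \eqref{eq:S01-sym} one has $g(t)=1-2tS_{0,1}(t)$ and hence $\kappa_S=1-\tfrac{2}{|\mS|}S_{0,1}(1/|\mS|)$, a quantity computable to arbitrary precision since that series converges at $t=1/|\mS|$. The nonvanishing of $\kappa_S$ I would settle by noting that each step set has strictly positive drift in both coordinates (the vector sums are $(1,1)$, $(1,1)$ and $(2,2)$), which forces the exponential growth rate of $S_n$ to be exactly $|\mS|$; were $\kappa_S$ zero the factor $1-t|\mS|$ would cancel and $S$ would extend analytically to $|t|<R$, contradicting $\limsup_n S_n^{1/n}=|\mS|>1/R$.

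For the error term I would remove the pole and bound $\tilde S(t):=S(t)-\kappa_S/(1-t|\mS|)$, which is holomorphic on $|t|<R$; its dominant singularities lie on $|t|=R$ and are inherited from $g$. Since the leading summand $Y_0(1)Y_1(1)=Y_+(1;t)$ already carries a square-root branch point at $t=R$, these singularities are of square-root type, and the transfer theorem gives $[t^n]\tilde S=O(R^{-n})=O\bigl((p_0+2\sqrt{p_1p_{-1}})^n\bigr)$, as claimed. The main obstacle is precisely this last step: one must verify that summing the infinitely many iterates neither manufactures a singularity of $g$ strictly inside $|t|=R$ nor produces a worse-than-algebraic singularity on the circle, so that the tail $\sum_{n\geq 1}$ is genuinely governed by the same radius $R$ as the first term. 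This analytic bookkeeping is the technical heart of the argument and mirrors the convergence estimates underlying Theorem~\ref{thm:converge}.
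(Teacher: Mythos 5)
You have a genuine gap, and it sits exactly where you admit it does. Your route to the analyticity of $g(t)=1-2tS_{0,1}(t)$ on $|t|<R$, $R=1/(p_0+2\sqrt{p_1p_{-1}})$, does not go through as written. The inference from the valuation bound $Y_n(1)=O(t^n)$ to ``geometric decay of the summands and hence locally uniform convergence'' is a non sequitur: $O(t^n)$ records the order of vanishing of a formal power series at $t=0$ and gives no bound whatsoever on $|Y_n(1;t)|$ at a fixed $t\neq 0$. Likewise, while $Y_1(1;t)$ indeed has only the branch points $t=1/(p_0\pm 2\sqrt{p_1p_{-1}})$, the iterates $Y_n(1)=Y_+(Y_{n-1}(1))$ acquire new singularities at each stage: branch points where the discriminant vanishes at the (moving) argument $Y_{n-1}(1;t)$, and poles where $Q_1(Y_{n-1}(1;t))=0$. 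The paper's non-D-finiteness sections are devoted precisely to showing these singularities are infinitely many, accumulating near $|t|=1/2$; ruling them out of the disk $|t|<R$ is the hard content, not a remark. (Proposition~\ref{thm:converge} does not supply it either: it is stated in the $q$-plane, for $|q|\neq 1$, and explicitly excepts the poles of the $Y_n$.) Your final transfer-theorem step then compounds the problem: you need to know that $g$ has no singularity inside $|t|=R$ and only algebraic ones on it, which you concede is unverified; and even granting analyticity on the open disk, that alone yields only $O\bigl((R^{-1}+\epsilon)^n\bigr)$, not the stated $O\bigl((p_0+2\sqrt{p_1p_{-1}})^n\bigr)$ --- some coefficient-level control is indispensable.

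The paper closes all of this with one combinatorial observation that your proposal is missing: quarter-plane walks returning to the $x$-axis are a subclass of half-plane walks returning to the $x$-axis, so $[t^n]S_{0,1}(t)\leq [t^n]H(t)$, where $H$ is the Banderier--Flajolet half-plane generating function with dominant singularity at $t=R$. This coefficientwise domination gives at once the radius bound for $S_{0,1}$ (hence for $g$) \emph{and}, because it bounds coefficients rather than merely a domain of analyticity, the clean $O(R^{-n})$ error term after the pole at $1/|\mS|$ is subtracted --- no information about the singularity structure of $g$ on $|t|=R$ is needed, so the ``analytic bookkeeping'' you defer never has to be done. For the nonvanishing $\kappa_S\neq 0$, your drift heuristic (positive drift forces $\limsup S_n^{1/n}=|\mS|$) is plausible but asserted without proof; the paper instead substitutes $t=1/|\mS|$ into the explicit formula for $1/Y_n$, shows $Y_n(1)Y_{n+1}(1)$ is monotonically decreasing, and uses the alternating-series remainder bound so that ten terms rigorously separate $1-2\sum_n(-1)^nY_n(1)Y_{n+1}(1)$ from zero --- which is also what makes ``computable to arbitrary precision'' effective, something bare convergence at $t=1/|\mS|$ does not provide.
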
 

\begin{proof}
  We proceed by basic singularity analysis~\cite{FlSe09}, aided by
  estimates from the related models restricted to the upper-half
  plane.  As $S(t)=\left(\frac{1}{1-|\mS|t}\right)(1-2t
  S_{0,1}(t))$, we show that $S(t)$ admits a simple pole at
  $|\mS|^{-1}$, and that this is the dominant singularity. The
  asymptotic expression in Equation~\eqref{eq:sym_asmpt} is then a
  consequence of evaluating the residue at this value, and bounding
  the dominant singularity of $S_{0,1}(t)$.

To accomplish this, we first consider the class of walks with steps
from $\mS$ which remain in the upper half plane, and return to the
$x$-axis. This is a well studied class, and the methods of Banderier
and Flajolet~\cite{BaFl02} yield the following expression for the generating function $H(t)$:
\[ H(t) =  \frac{ (1-tp_0) - \sqrt{\left(1-tp_0\right)^2 -4t^2p_{-1}p_{1}} } {2tp_1p_{-1}}. \]
In particular, the dominant singularity of $H(t)$ is
$t=\frac{1}{p_0+2\sqrt{p_1p_{-1}}}$. Now, the set of quarter plane
walks with steps from $\mS$ which return to the $x$-axis is a subset of
this set, and so
\[
[t^n]S_{0,1}(t) \leq [t^n] H(t).
\]
Consequently, $S_{0,1}(t)$ is convergent for $0\leq t \leq
{p_0+2\sqrt{p_1p_{-1}}}<|\mS|$, where the latter inequality is a result of the
fact that $p_{-1} = 1$, $p_0\in\{0,1\}$ and $p_1\in \{1,2,3\}$ in
these cases. Thus, the singularity at $|\mS|^{-1}$ is indeed dominant.

We also need to verify that $S_{0,1}(|\mS|^{-1})\neq 0$ to justify
that $|\mS|^{-1}$ is not a removable singularity.  In
Section~\ref{sec:explicit}, we determine an explicit expression for
$1/Y_n$.  Substituting $t=1/|S|$ into this expression proves that
$Y_n(1)Y_{n+1}(1)$ is monotonically decreasing, so that the error on
the $N^{\text{th}}$ partial sum of the alternating series is bounded
by $Y_{N+1}(1)Y_{N+2}(1)$.  Numerically evaluating the $10^\text{th}$
partial sum is sufficient to bound $1 - 2 \sum_n
(-1)^{n}Y_n(1)Y_{n+1}(1)$ away from 0 in each case. The results are
summarized in Table~\ref{tab:estimates-sym}.

We have shown that the dominant singularity of $S(t)$ is indeed the simple pole at
$|\mS|^{-1}$. The residue is $S_{0,1}(|\mS|^{-1})$ which, when
evaluated with suitable precision, gives the stated constant in
Table~\ref{tab:estimates-sym}. The sub-dominant factor comes from the
inverse of the dominant singularity of $S_{0,1}(t)$, which is bounded by $\frac{1}{p_0+2\sqrt{p_1p_{-1}}}$.

\end{proof}

\begin{table}
\greybox{
\begin{tabular}{lll}
{\bf Model} \hspace{0.2in} & {\bf Asymptotic estimate for number of walks
of length $n$} \\[2mm]
$\mA$ & $A_n\sim \kappa_A 3^n + O\left(\left( 2\sqrt{2} \right)^n\right)$ & $\kappa_A = 0.17317888\dots$\\
$\mB$ & $B_n\sim \kappa_B 4^n + O\left(\left(1+ 2\sqrt{2} \right)^n\right)$ & $\kappa_B = 0.15194581\dots$ \\
$\mC$ & $C_n\sim \kappa_C 5^n + O\left(\left(1+ 2\sqrt{3} \right)^n\right)$ &  $\kappa_C = 0.38220125\dots$
\end{tabular}}
\smallskip
\caption{Asymptotic estimates for number of walk of length
  $n$.  On a modern computer the $\kappa_S$ can be calculated to a thousand decimal places in seconds.}
\label{tab:estimates-sym}

\end{table}

\subsection{The generating functions $A(t)$, $B(t)$ and $C(t)$ are not D-finite }
\label{sec:not-dfinite-sym}
The set of D-finite functions are closed under algebraic
substitution. Thus, since our goal is to prove that the generating
functions $A(t)$, $B(t)$ and $C(t)$ are not D-finite, it is sufficient
to consider these functions evaluated at $t= q/(1+q^2)$. These turn
out to be easier to analyze as the transformation concentrates
the singularities around the unit circle. \emph{As such, we shall
  re-interpret the notation we have introduced thus far to be
  functions of $q$ directly.}

For each model, the $Y_n(1)$ terms contribute singularities. A quick
glance at an example is very suggestive; see Figure~\ref{fig:sings}
for the singularities of $Y_{20}(1)$ in the $q$-plane for the three
different models.
\begin{figure}
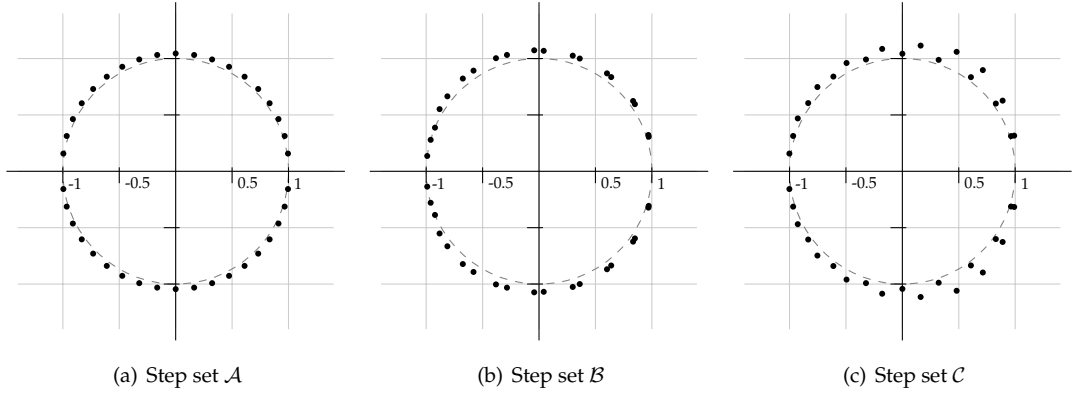
\center
\mbox{ \subfigure[Step set \mA]{\pointplot{\aYpos}} 
       \subfigure[Step set \mB]{\pointplot{\bYpos}} 
       \subfigure[Step set \mC]{\pointplot{\cYpos}} }
\caption{Plots of the singularities of
  $Y_{20}(1)|_{t=\frac{q}{1+q^2}}$ for the three symmetric models.}
\label{fig:sings}
\end{figure}
The main difficulty is proving that the singularities are
genuinely present in the generating function.  To prove this we follow
these steps:

\begin{description} \setlength{\itemsep}{0mm} \sl
\item[Step 1] Determine an explicit expression for $Y_n(1)$;
\item[Step 2] Determine a polynomial $\sigma_n(q)$ whose set of roots contains the poles of $Y_n(1)$;
\item[Step 3] Determine a region where there are roots of $\sigma_n(q)$ that are truly poles of $Y_n(1)$; 
\item[Step 4] Show that there is no point $\rho$ in that region that is a root of both $\sigma_n(q)$ and $\sigma_k(q)$ for different $n$ and $k$;
\item[Step 5] Demonstrate that~$S_{1,0}(q/(1+q^2))$ has an infinite number of  singularities and, consequently, is not D-finite. It follows that $S(t)$ is not D-finite, by closure under algebraic substitution and the expression in Equation~\eqref{eq:S11-sym}. 
\end{description}

\subsubsection{Step 1: An explicit expression for $Y_n$}
\label{sec:explicit}

In this section we find an explicit, non-iterated expression for the functions~$Y_n$. We follow the method of~\cite{MiRe09} very closely, with the exception that we make the variable substitution earlier in the process. As such, we repeat, we view all functions as functions of $q$ in this section. 

We begin by performing the variable substitution $t=q/(1+q^2)$ directly into Equation~\eqref{eqn:fund}, and re-solve the kernel to ensure control over the choice of the branch in the solution. The kernels are:
 \begin{align*}
\text{Model } \mA\qquad   K(x,y) &= - q(x^2+1)\,y^2 + x(1+q^2)\,y - qx^2 \\
\text{Model } \mB\qquad    K(x,y) &= - q(x+1)\,y^2 + x(-qx+1+q^2)\,y - qx^2 \\  
\text{Model } \mC\qquad   K(x,y) &= - q(1+x+x^2)\,y^2+x(-qx+1+q^2)\,y-qx^2.
  \end{align*}
Recall we denote this generically as $K(x,y)=a_2y^2 +a_1y  +a_0$, adapting the $a_i$ to each particular model. 
Each is solved as before to get our initial solutions to $K(x, Y(x))=0$. Great care is taken here to ensure that the branch as written remains analytic at 0:
\begin{align*}
\text{Model } \mA\qquad {Y}_{\pm 1}(x;q) &= \frac{x}{2q(1+x^2)} \cdot \left( 1+q^2\mp\sqrt{1 - 2(2x^2+1)q^2 + q^4} \right)\\
\text{Model } \mB\qquad{Y}_{\pm 1}(x;q) &=  \frac{x}{2q(1+x)} \cdot \left( 1-qx+q^2 \mp\sqrt{q^4-2q^3x+(x^2-4x-2)q^2-2qx+1} \right)\\
\text{Model } \mC\qquad{Y}_{\pm 1}(x;q) &= \frac{x}{2q(1+x+x^2)} \cdot \left( 1-qx+q^2\mp\sqrt{ q^4-2q^3x-(3x^2+4x+2)q^2-2qx+1} \right).
  \end{align*}
We define the sequence of iterates $\{Y_n(x)\}_{(n)}$ as before: $Y_{n+1}(x)=Y_+(Y_n(x);q),\quad Y_1(x)=Y_+(x;q)$. 

For each of these models, examining the coefficients of $y$ in the kernel implies 
\begin{equation} \label{eq:rec-base} \frac{1}{Y_-(x;q)} +
  \frac{1}{Y_+(x;q)} = \frac{Y_-(x;q) + Y_+(x;q)}{Y_{-}(x;q) \cdot
    Y_+(x;q)} =
  \frac{-a_1/a_2}{a_0/a_2}=-\frac{a_1}{a_2}. \end{equation} 
  
Furthermore, the iterates compose nicely due to the following lemma.
\begin{lemma} For each of the symmetric models $\mA,\mB,\mC$ we have
   \[ Y_-\left(Y_+(x)\right) = Y_+\left(Y_-(x)\right) = x.\]
\end{lemma}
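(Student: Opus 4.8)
The plan is to exploit the $x\leftrightarrow y$ symmetry of the three models together with a branch‑order argument that selects the correct root of the kernel. The first step is to observe that for each of $\mA,\mB,\mC$ the kernel is symmetric, $K(x,y)=K(y,x)$. This is immediate from the definition $K(x,y)=xy\bigl(1-t\sum_{(i,j)\in\mS}x^iy^j\bigr)$: a model symmetric about the line $x=y$ contains $(j,i)$ whenever it contains $(i,j)$, so its step inventory is invariant under exchanging $x$ and $y$, and hence so is $K$. One may also simply read this off the three explicit kernels displayed above.

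Given this symmetry, the defining relation $K(x,Y_+(x))=0$ yields $K(Y_+(x),x)=0$. Reading $K(Y_+(x),\,\cdot\,)$ as a quadratic in its second argument, its two roots are by construction $Y_+(Y_+(x))$ and $Y_-(Y_+(x))$, so the identity just obtained forces $x\in\{Y_+(Y_+(x)),\,Y_-(Y_+(x))\}$; it remains only to decide which. Here I would invoke the order‑in‑$t$ behaviour recorded earlier, namely that $Y_+(x)=O(t)$ is the branch vanishing at $t=0$ while $Y_-(x)=O(1/t)$ blows up there. Then $Y_+(Y_+(x))=O(t^2)$ also vanishes at $t=0$, whereas $x$ does not, so the two cannot agree as series in $t$ with $x$ a free parameter; this excludes the small root and gives $x=Y_-(Y_+(x))$. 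The companion identity is symmetric: from $K(x,Y_-(x))=0$ one gets $K(Y_-(x),x)=0$, hence $x\in\{Y_+(Y_-(x)),\,Y_-(Y_-(x))\}$, and since $Y_-(Y_-(x))=O(1/t^2)$ is unbounded at $t=0$ while $x$ is finite, the large root is excluded and $x=Y_+(Y_-(x))$. Because the two roots of a given kernel are distinct as series (one $O(t)$, the other $O(1/t)$), these selections are unambiguous.

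The only genuinely delicate point, and the one I would write out carefully, is the consistency of the branch under composition: one must be sure that substituting the series $u=Y_+(x)=O(t)$ into $Y_-$ is legitimate and that the nominal pole $1/(tP_1(u))$ of $Y_-(u)$ is cancelled by the $O(t)$ factor carried by $u$, so that $Y_-(Y_+(x))$ is indeed a well‑defined power series equal to $x+O(t)$ — consistent with, and ultimately forced to equal, $x$ by the exact argument above. For model $\mA$ this cancellation is transparent from $Y_+(x)\sim xq$ and $Y_-(u)\sim u/\bigl(q(1+u^2)\bigr)$, whose composition is $x+O(q)$; the other two models behave identically. Once this is checked, the whole lemma reduces to the short symmetry‑and‑Vieta bookkeeping above.
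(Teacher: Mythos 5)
Your first identity is handled soundly: kernel symmetry $K(x,y)=K(y,x)$ plus Vieta shows $x\in\{Y_+(Y_+(x)),\,Y_-(Y_+(x))\}$, and since $Y_+(Y_+(x))=O(t^2)$ the selection $Y_-(Y_+(x))=x$ follows, including your careful check that the nominal pole $1/(tQ_1(u))$ of $Y_-$ is cancelled when $u=Y_+(x)=O(t)$. This symmetry-and-Vieta route is also genuinely different from the paper's, which instead expands the branch-independent quartic $p(z)=\prod_{(j,k)\in\{\pm1\}^2}(z-Y_j(Y_k(z)))$, extracts the factor $(x-z)^2$, and rules out $Y_+(Y_+(x))$ and $Y_-(Y_-(x))$ by Taylor expansion.

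However, the exclusion step for the second identity contains a genuine error. You assert $Y_-(Y_-(x))=O(1/t^2)$, hence ``unbounded at $t=0$.'' The naive rule $O(1/t)\circ O(1/t)=O(1/t^2)$ fails here because $Y_-(y)\sim 1/(tQ_1(y))$ \emph{shrinks} as its argument grows: at $y=Y_-(x)=O(1/t)$ the denominator $tQ_1(y)$ is $O(1)$, not $O(t)$. Concretely, for model $\mA$ one has $Q_1(y)=y+1/y$ and $Y_-(x)\sim x/\bigl(t(1+x^2)\bigr)$, and the discriminant $1-4t^2(1+y^2)$ then tends to $\bigl((1-x^2)/(1+x^2)\bigr)^2$ rather than to $1$; a short computation gives $Y_-(Y_-(x))\to 1/x$ as $t\to 0$. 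So the composition is \emph{bounded} at $t=0$ and your stated reason for excluding it is false for $\mA$. (This is confirmed by the paper's own quadratic $r(z)$ for $\mA$, whose two roots are $Y_+(Y_+(x))\approx xt^2$ and $Y_-(Y_-(x))$, with root sum $x(1-2t^2)/(t^2+x^2)\to 1/x$.) For $\mB$ and $\mC$ the composition is $O(1/t)$, not $O(1/t^2)$, so there the conclusion happens to survive but the claimed order is still wrong. Relatedly, your assertion that the branch selections are unambiguous because ``the two roots are one $O(t)$, the other $O(1/t)$'' breaks down exactly at the argument $Y_-(x)$: for $\mA$ both roots of $K(Y_-(x),\cdot)$ are $O(1)$ (namely $x+o(1)$ and $1/x+o(1)$), so distinguishing $Y_+(Y_-(x))$ from $Y_-(Y_-(x))$ requires tracking the square-root branch under substitution of a series carrying a $t^{-1}$ term. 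The lemma's conclusion can be rescued --- $1/x\neq x$ generically, and $O(1/t)\neq x$ --- but only via the per-model expansion of $Y_-(Y_-(x))$ that you skipped, which is in essence what the paper's explicit computation of $r(z)$ and its Taylor check supply.
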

\begin{proof}
For any given model, expanding the polynomial
\[p(z) = \prod_{(j,k)\in\{\pm1\}^2} (z - Y_j\left(Y_k(z)\right)),\]
implies $p(z) = (x-z)^2r(z)$, where $r(z) \in
\mathbb{R}(x,t)[z]$ is given by
{\small 
\begin{align*}
 \quad& r(z) \\
\mA \quad& \left((t^2+x^2)z^2-x(1-2t^2)z +t^2x^2\right)\left(t^2+x^2\right)^{-1} \\
\mB \quad& \left((t^2x^2+2t^2x+tx+tx^2+t^2)z^2+(-x+2t^2x^2+tx^2+2t^2x)z+t^2x^2\right)\left(t(x+1)(tx+x+t)\right)^{-1} \\
\mC \quad& {\left((2t^2x+t^2+tx^2+tx+x^2+t^2x^2)z^2+(2t^2x^2+tx^2+2t^2x-x)z+t^2x^2\right)\left(2t^2x+t^2+tx^2+tx+x^2+t^2x^2\right)^{-1}}\\
\end{align*}  
}
Thus, two of $p(z)$'s roots are equal to $x$ and examination of the initial terms of a Taylor series in~$t$ shows that 
$Y_+(Y_+(x))$ and $Y_-(Y_-(x))$ are not. 
\end{proof}

It turns out to be easier to work with the reciprocal of $Y_n$, so we define
$\y{n}= \frac{1}{Y_n(1)}$
and view this as a function of $q$. Equation~\eqref{eq:rec-base} then
converts into a recurrence after the substitution~ $x =
Y_{n-1}(x)$. Specifically, this gives a linear recurrence for the reciprocal
function, $\frac{1}{Y_n(x)}$; we are interested in this evaluated at
$x=1$, and the resulting recurrences and their solutions in terms of
$\y{1}$ are summarized in Table \ref{tab:sym-rec}.

\begin{table}
\begin{tabular}{cll}
Model& {\bf Recurrence} &$\yq{n}{q}$\\[2mm]
\mA & $\y{n} = (q+\frac{1}{q})\y{n-1}-\y{n-2}$ &$\frac{(q^2-q^{2n}) + q(q^{2n}-1)\overline{Y}_{1}}{q^n(q^2-1)}$
\\[2mm]
\mB, \mC & $\y{n} = (q+\frac{1}{q})\y{n-1}-\y{n-2}-1\quad$ &$\frac{q(q-1)(q^{2n}-1)\overline{Y}^{B,C}_{1} + (q-q^n)(2q^{n+1}-q^n+q^2-2q)}{q^n(q+1)(q-1)^2}$\\[2mm]
\end{tabular} 
\caption{The recurrences and solutions for models \mA, \mB, and \mC.}
\label{tab:sym-rec}
\end{table}

Following the same procedure as above, we obtain a generic expression
for $S_{1,0}(t)$, the generating function for the number of walks
which return to the axis for model $\mS$, which can be applied to all
three symmetric walks:
\begin{equation}\label{eq:S11-sym-q} S_{1,0}\left(\frac{q}{1+q^2}\right) =
  (q+1/q)\Sum_{n=0}^\infty (-1)^n Y_{n}(1)
  {Y}_{n+1}(1).\end{equation} Our careful choice of
branches now implies that this is a formal power series. (Remark, this
was not the case in~\cite{MiRe09}.)  Our expression is robust -- the
sum converges everywhere, except possibly on the unit circle and at
the poles of the $Y_n$.
\begin{prop}
\label{thm:converge}
For each model in $\{\mA, \mB, \mC\}$ the sum $(q+1/q)\Sum_{n=0}^\infty (-1)^n Y_{n}(1) {Y}_{n+1}(1)$ is convergent for all~$q\in \mathbb{C}$ with $|q| \neq 1$, except possibly at the set of points defined by the singularities of the  $Y_{n}(1)$ for all $n$.
\end{prop}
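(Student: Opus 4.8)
The plan is to convert the closed forms for $\y{n}=1/Y_n(1)$ recorded in Table~\ref{tab:sym-rec} into sharp growth estimates for $Y_n(1)$, and then dominate the alternating series by a convergent geometric series. First I would rewrite each solution in Table~\ref{tab:sym-rec} in the ``two-term'' shape
\[ \y{n} = \alpha(q)\,q^{n} + \beta(q)\,q^{-n} + \gamma(q), \]
which is just the general solution of the linear recurrences in the table, whose characteristic roots are $q$ and $1/q$: one has $\gamma\equiv 0$ for model $\mA$ and $\gamma(q)=q/(q-1)^2$ for $\mB$ and $\mC$. The coefficients $\alpha,\beta$ are explicit rational functions of $q$ and of $\y{1}$, the latter being the algebraic function of $q$ fixed by the branch chosen to be analytic at $q=0$; for model $\mA$, for example, $\alpha(q)=(q\y{1}-1)/(q^2-1)$ and $\beta(q)=q(q-\y{1})/(q^2-1)$.

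The core estimate is that for $|q|\neq 1$ exactly one of the two geometric terms dominates. If $|q|>1$ then $q^{n}\to\infty$ while $q^{-n}\to 0$, so provided $\alpha(q)\neq 0$ we have $|\y{n}|\geq \tfrac12|\alpha(q)|\,|q|^{n}$ for all sufficiently large $n$, whence $|Y_n(1)|=1/|\y{n}|=O(|q|^{-n})$; the additive constant $\gamma$ is negligible against the dominant geometric term and does not affect this. Symmetrically, for $0<|q|<1$ the term $q^{-n}$ dominates and, provided $\beta(q)\neq 0$, one gets $|Y_n(1)|=O(|q|^{n})$. In both regimes $|Y_n(1)|=O(\mu^{n})$ with $\mu=\min(|q|,|q|^{-1})<1$, so that
\[ \bigl|Y_n(1)\,Y_{n+1}(1)\bigr| = O\!\left(\mu^{2n+1}\right). \]
Since the prefactor $q+1/q$ is a fixed finite scalar for $q\neq 0$, the series $(q+1/q)\Sum_n(-1)^nY_n(1)Y_{n+1}(1)$ is dominated term-by-term by a convergent geometric series and so converges absolutely. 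The excluded set is exactly where some $\y{n}(q)=0$, i.e.\ the poles of the $Y_n(1)$; away from those points every term is finite, so the finitely many initial terms cause no trouble. The point $q=0$ (with $|q|=0\neq1$) falls outside the two regimes, but there the series is simply the Taylor expansion of $S_{1,0}(q/(1+q^2))$, which has positive radius of convergence because $S_n=O(|\mS|^n)$.

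The remaining --- and genuinely technical --- step is the non-vanishing of the dominant coefficient in each region: $\alpha(q)\neq 0$ for $|q|>1$ and $\beta(q)\neq 0$ for $0<|q|<1$. I would check this by substituting the explicit radical expression for $\y{1}$, clearing denominators, then isolating and squaring the square root to obtain a polynomial whose roots contain all solutions of $\alpha(q)=0$ (resp.\ $\beta(q)=0$). For model $\mA$ this is immediate: $\alpha(q)=0$ forces $Y_1(1)=q$, and squaring $\sqrt{1-6q^2+q^4}=1-3q^2$ collapses to $8q^4=0$, so the only solution is $q=0$, outside $|q|>1$; likewise $\beta(q)=0$ forces $\sqrt{1-6q^2+q^4}=q^2-3$, giving $1=9$, so $\beta$ vanishes only at $q=0$.

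I expect this last verification to be the main obstacle, precisely because it demands careful bookkeeping of the square-root branch --- the very point where~\cite{MiRe09} erred --- and because the coefficient polynomials for $\mB$ and $\mC$ (which carry the extra constant $\gamma$) are unwieldy enough that I would lean on a computer-algebra check to confirm that their zero sets avoid $\{|q|>1\}$ and $\{0<|q|<1\}$ respectively. Finally, since the bound $|Y_n(1)|=O(\mu^n)$ holds locally uniformly on compact subsets of $\{|q|\neq 1\}$ that avoid the poles, the same argument upgrades pointwise convergence to local uniform convergence, so the limiting function is analytic there --- a fact used implicitly in the subsequent singularity analysis.
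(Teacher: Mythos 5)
Your proposal is correct and follows essentially the same route as the paper: both rest on the closed forms in Table~\ref{tab:sym-rec}, which show $\y{n}$ grows like $q^{\pm n}$ off the unit circle so that $Y_n(1)Y_{n+1}(1)=O\left(\mu^{2n}\right)$ with $\mu=\min\left(|q|,|q|^{-1}\right)<1$ --- the paper packages this as a ratio test, you as domination by a geometric series. Your explicit verification that the dominant coefficient $\alpha(q)$ (resp.\ $\beta(q)$) does not vanish in the relevant region is a hypothesis the paper's ratio test tacitly absorbs into its ``mechanical exercise,'' so your extra care there (carried out fully for $\mA$, deferred to a computer-algebra check for $\mB$ and $\mC$) is a refinement of the same argument rather than a different one.
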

\begin{proof}
In all cases the ratio test is applied to the explicit formulas for $\overline{Y}_n(1;q)$. It is a mechanical exercise to verify that for each case
when~$|q|<1$:
      \[ \lim_{n\rightarrow\infty}\left| \frac{Y_{n+1}Y_{n+2}}{Y_nY_{n+1}} \right|  =\left| \frac{\overline{Y}_n}{\overline{Y}_{n+2}}\right|= |q|^2 < 1,\]
and when~$|q|>1$: 
      \[ \lim_{n\rightarrow\infty }\left| \frac{Y_{n+1}Y_{n+2}}{Y_nY_{n+1}} \right| = \left| \frac{\overline{Y}_n}{\overline{Y}_{n+2}}\right|  = \frac{1}{|q|^2} < 1.\]
\end{proof}

\subsubsection{Step 2: The singularities of $Y_n(1)$}
\label{sec:sings}

In order to argue about the singularities, we find a family of
polynomials $\sigma_n(q)$ that the roots of $\y{n}$ satisfy: the polynomials in Table~\ref{tab:polys} are obtained by manipulating
the explicit expressions given above. Unfortunately, extraneous roots
are introduced during the algebraic manipulation when an equation is
squared to remove the square root present. In fact, the extraneous
roots are exactly those which correspond to a negative sign in front
of the square root.  If one defines $\y{-n} = \overline{Y}_{-1} \circ
\y{-(n-1)}$ for $n>1$, then using the same argument
as above one can check that $\y{-n}$ satisfies the same recurrence
relation as $\y{n}$, up to a reversal of the sign in front of the
square root. Thus, we see that the set of roots of $\sigma_n(q)$ is
simply the union of the sets of roots of $\y{n}$ and $\y{-n}$.

\begin{table}\center
\begin{tabular}{ll}
Model & {$\sigma_n(q)$} \\[2mm]
\mA & {\small$\alpha_n(q)=q^{4n}+q^{2n+2}-4q^{2n}+q^{2n-2} + 1$} \\
\mB &{\small$\beta_n(q)=\left(q^{2n-1}+(q^3-2q^2-2q+1)q^{n-2}+1\right) \left(q^{2n+1}+(q^3-2q^2-2q+1)q^{n-1}+1\right)$ }\\
\mC &{\small $\gamma_n(q)=q^2(1+q^2-q)(1+q^{4n})+q(q^2-3q+1)(q+1)^2(q^n+q^{3n})$}\\
       &\hfill{\small $\qquad+q^{2n}(1-q^2-4q+14q^3-4q^5-q^4+q^6)$}\\
\end{tabular}
\caption{The singularities of $Y_n$ in the $q$-plane satisfy the
  polynomial equation~$\sigma_n(q)=0$.}
\label{tab:polys}
\smallskip
\end{table}

Furthermore, we can show that these roots are dense around the unit circle using the results of Beraha, Kahane, and Weiss -- specifically, a  weakened statement of the main theorem of~\cite{BeKaWe78}.

\begin{prop}[\cite{BeKaWe78}] Given non-zero polynomials $\mu_1,\dots,\mu_k,\lambda_1,\dots,\lambda_k$, define
\[P_n(q) = \sum_{j=1}^k \mu_j(q)\lambda_j(q)^n.\]
If there does not exist a constant $\omega$ such that $|\omega|=1$ and $\lambda_j=\omega\lambda_k$ for $j\neq k$, and for some $l\geq 2$
\[ \left| \lambda_1(x) \right| = \left| \lambda_2(x) \right| = \cdots = \left| \lambda_l(x) \right| > \left| \lambda_j(x) \right|, \]
for all  $l+1 \leq j \leq k$, then $x$ is a limit point of the zeroes of $\{ P_n(q)\}$ -- i.e., there exists a sequence $q_n$ converging to $x$ such that $P_n\left(q_n\right)=0$ for all $n$.
\end{prop}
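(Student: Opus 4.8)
The plan is to reproduce the local argument of Beraha, Kahane and Weiss \cite{BeKaWe78}; its entire mechanism takes place in a small disk around the candidate point $x$. First I would localize. Since $|\lambda_1(x)| = \cdots = |\lambda_l(x)| > |\lambda_j(x)|$ for $j>l$ and each dominant $\lambda_i$ is nonzero at $x$, I can fix a closed disk $D$ centred at $x$ on which the dominant factors never vanish and on which the subdominant ratios satisfy $|\lambda_j(q)/\lambda_1(q)| \le 1-\delta$ for some fixed $\delta>0$ and all $j>l$. Dividing by $\lambda_1(q)^n$ gives
\[ \frac{P_n(q)}{\lambda_1(q)^n} = \sum_{j=1}^{l} \mu_j(q)\, r_j(q)^n + E_n(q), \qquad r_j := \lambda_j/\lambda_1, \]
where $r_1\equiv 1$, each $|r_j(x)|=1$ for $j\le l$, and the tail $E_n$ tends to $0$ uniformly on $D$. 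Thus, up to a negligible error, the zeros of $P_n$ near $x$ are the zeros of the finitely many competing terms of comparable modulus.

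The heart of the matter is the case $l=2$, which already captures the phenomenon. Here I set $h := r_2 = \lambda_2/\lambda_1$, an analytic function with $|h(x)|=1$. The hypothesis that no unimodular constant $\omega$ satisfies $\lambda_2 = \omega\lambda_1$ guarantees that $h$ is \emph{not} constant, so after shrinking $D$ it is a conformal map onto a neighbourhood of $w_0 := h(x)$, with $|w_0|=1$. In the local coordinate $w = h(q)$ the equation $\mu_1 + \mu_2 h^n = 0$ becomes $w^n = -A(w)$, where $A(w) := \mu_1(q(w))/\mu_2(q(w))$ is analytic and (in the generic situation $\mu_1(x),\mu_2(x)\neq 0$) nonzero near $w_0$. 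Taking logarithms, this is the fixed-point equation $\log w = \frac1n\bigl(\log(-A(w)) + 2\pi i m\bigr)$; for each large $n$ I choose the integer $m$ so that $2\pi m/n$ approximates $\arg w_0$ to within $\pi/n$, and then observe that the right-hand side maps a small disk about $w_0$ into itself with $w$-derivative $O(1/n)$. The contraction mapping principle yields a solution $w_n \to w_0$, hence a genuine zero $q_n = q(w_n) \to x$ of $P_n$ for every sufficiently large $n$ — exactly the desired sequence. (The sub-cases where some $\mu_i(x)=0$ are handled by the same technique after factoring out the common zero.)

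For general $l\ge 2$ the localization is identical, but the dominant part is now a sum of several rotating vectors of comparable size, so the direct fixed-point construction need not produce a solution for \emph{every} $n$. Here I would argue by contradiction via the argument principle: if $P_n$ were zero-free on $D$ for all large $n$, the image of $\partial D$ under $P_n/\lambda_1^n$ would have winding number $0$ about the origin; but with at least two summands of equal maximal modulus competing, and all pairwise ratios $r_i/r_j$ non-constant (again by the unimodularity hypothesis), the identity of the largest summand must change along $\partial D$, forcing nonzero winding for a subsequence of $n$ and hence a contradiction. This produces a subsequence of zeros accumulating at $x$, which already establishes $x$ as a limit point of the zero sets — all that the downstream argument (Step 5) requires. \textbf{The main obstacle} is precisely this general-$l$ bookkeeping: simultaneously controlling the phases of several near-balanced terms and extracting a clean winding estimate. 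Throughout, the role of the non-degeneracy hypothesis is to keep every relevant ratio $\lambda_i/\lambda_j$ non-constant, so that perturbing $q$ inside $D$ genuinely moves these phases and the competition between dominant terms cannot be frozen.
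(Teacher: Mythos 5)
The first thing to say is that the paper contains no proof of this proposition at all: it is imported, in deliberately weakened form, from Beraha--Kahane--Weiss \cite{BeKaWe78}, so your attempt has to be measured against the original argument rather than anything in the text. Judged that way, your $l=2$ analysis is essentially the classical one and is sound in outline: divide by $\lambda_1^n$, observe the subdominant tail is $O((1-\delta)^n)$ uniformly on a small disk, use the non-degeneracy hypothesis to conclude $h=\lambda_2/\lambda_1$ is non-constant, solve $w^n=-A(w)$ by the contraction you describe, and pull back. Two repairs are needed but are routine: conformality of $h$ fails when $h'(x)=0$ (the open mapping theorem suffices to pull $w_n$ back to $q_n\to x$), and transferring zeros from the two-term truncation to $P_n$ itself requires a quantitative Rouch\'e step --- the truncation's zero is simple with derivative of order $n$ while the tail is exponentially small, so a circle of radius, say, $(1-\delta/2)^n$ works. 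You gesture at both points (``negligible error'') without carrying them out.

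The genuine gap is the general-$l$ case, which you correctly flag as ``the main obstacle'' and then do not overcome. The assertion that ``the identity of the largest summand must change along $\partial D$, forcing nonzero winding for a subsequence'' is a heuristic, not an argument: along $\partial D$ the argument of $F_n=\sum_{j\le l}\mu_j r_j^n$ is controlled on each arc by whichever term locally dominates, and the resulting winding contributions are of size $O(n)$ with uncontrolled signs, so nothing you say excludes total winding zero for every large $n$. This is precisely where the actual BKW proof does its work: their sufficiency argument is an induction on the number of dominant terms, reducing to the two-term case at nearby points by exploiting the fact (which you do isolate) that the non-degeneracy hypothesis keeps the harmonic functions $\log\lvert\lambda_i/\lambda_j\rvert$ pairwise distinct. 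The gap is not cosmetic for this paper: in the application (Corollary~\ref{cor:dense}) the $\lambda_j$ are powers of $q$, so at every point of the unit circle \emph{all} terms are simultaneously dominant ($l=k\ge 3$ for each of $\alpha_n,\beta_n,\gamma_n$), i.e.\ the one case you fully prove, $l=2$, is the one case the paper never uses. Finally, note that your contradiction scheme yields zeros only along a subsequence of $n$, whereas the proposition as quoted asserts $P_n(q_n)=0$ for all $n$ (and your $l=2$ construction does achieve this for all large $n$); you are right that a subsequence suffices for the downstream density argument, but it proves a weaker statement than the one displayed.
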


As each of $\alpha_n(q),\beta_n(q),$ and $\gamma_n(q)$ can be decomposed into the required form where the $\lambda_j(q)$ are simply powers of $q$, and thus have the same modulus when $q$ is on the unit circle, this immediately gives the following result.

\begin{cor} The roots of the families of polynomials $\{\alpha_n(q)\},\{\beta_n(q)\},$ and $\{\gamma_n(q)\}$ are dense around the unit circle. \label{cor:dense} \end{cor}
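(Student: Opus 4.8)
The plan is to apply the weakened Beraha--Kahane--Weiss proposition directly to each of the three families, so that the entire task reduces to rewriting $\alpha_n$, $\beta_n$ and $\gamma_n$ in the shape $P_n(q)=\sum_{j=1}^k \mu_j(q)\lambda_j(q)^n$ in which every $\lambda_j$ is a power of $q$, and then checking the two hypotheses of the proposition pointwise on the unit circle. First I would collect each polynomial according to its dependence on $n$. For model \mA, after multiplying through by $q^2$ (which alters the root set only at $q=0$, a point off the unit circle), one obtains $q^2\alpha_n(q)=q^2(q^4)^n+(q^4-4q^2+1)(q^2)^n+q^2$, so the relevant exponential bases are $\lambda_1=q^4$, $\lambda_2=q^2$, $\lambda_3=1$ with nonzero polynomial coefficients. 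For \mC the five groups of terms give bases $q^4,q^3,q^2,q,1$, again with nonzero polynomial coefficients. For \mB I would not treat the product directly: since the zero set of $\beta_n$ contains the zero set of each factor, it suffices to put one factor into the required form, and clearing the negative power of $q$ turns $q^{2n-1}+(q^3-2q^2-2q+1)q^{n-2}+1$ into a sum with bases $q^2,q,1$.

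The two conditions are then immediate and uniform in the base point. Because each $\lambda_j$ is a distinct power of $q$, on the circle $|q|=1$ every base satisfies $|\lambda_j(q)|=1$; hence all $k$ bases are simultaneously of maximal modulus, and the dominant-modulus hypothesis of the proposition holds with $l=k\geq 2$ at every point of the unit circle, there being no strictly smaller bases to exclude. For the non-degeneracy hypothesis I would observe that the ratio $\lambda_j/\lambda_k$ of any two distinct bases is a nonconstant power of $q$, so no relation $\lambda_j=\omega\lambda_k$ with $\omega$ a constant of modulus $1$ can hold; the coefficients $\mu_j$ are likewise nonzero polynomials by inspection. Applying the proposition at an arbitrary point $x$ with $|x|=1$ then shows that $x$ is a limit point of the zeros of the corresponding family, and letting $x$ range over the whole circle gives density.

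Since multiplying by a fixed power of $q$ only adds or removes a root at the origin, the density statement transfers back to $\{\alpha_n\}$, $\{\beta_n\}$ and $\{\gamma_n\}$ themselves, and for \mB the roots of the full product inherit the density of the roots of the single factor. I do not anticipate a genuine obstacle here: the argument is a clean specialization of the proposition, and the only points requiring care are bookkeeping ones, namely writing $\gamma_n$ (which has the most terms) in the correct grouped form, confirming that each coefficient polynomial $\mu_j$ is not identically zero, and noting that the auxiliary multiplications by powers of $q$ cannot create spurious roots on the unit circle. The conceptual content is carried entirely by the proposition; the corollary is essentially a verification that each family meets its hypotheses.
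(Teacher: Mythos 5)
Your proposal is correct and takes essentially the same approach as the paper: the paper's proof likewise observes that each of $\alpha_n$, $\beta_n$, $\gamma_n$ can be written in the form $\sum_j \mu_j(q)\lambda_j(q)^n$ with every $\lambda_j$ a power of $q$, so that all bases share the same modulus on the unit circle and the Beraha--Kahane--Weiss proposition applies at each point of the circle. Your additional bookkeeping (clearing powers of $q$, verifying the non-degeneracy hypothesis, and for model $\mB$ working with a single factor rather than the expanded product) merely fills in details the paper leaves implicit.
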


Furthermore, as our results on the convergence of the series in Equation \eqref{eq:S11-sym-q} is only valid at points off of the unit circle, we use a Lemma of Konvalina and Matache to determine when the roots of $\alpha_n,\beta_n,$ and $\gamma_n$ may lie on the unit circle.

\begin{lemma}[Konvalina and Matache \cite{KoMa04}, Lemma
  1] \label{lem:ucirc} Suppose $F(x)$ is a palindromic polynomial
  (its coefficient sequence is the same when read from the left or
  right) of degree $2N$.  Then the argument of any root of $F(x)$
  which lies on the unit circle satisfies
\[ \phi(\theta) = \epsilon_{N} + 2\sum_{k=0}^{N-1}\epsilon_k \cos\left( (N-k)\theta \right)  \]
where $\epsilon_j$ denotes the coefficient of $x^j$ in $F(x)$.
\end{lemma}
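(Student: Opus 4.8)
The plan is to factor the palindromic structure directly into the value of $F$ on the unit circle, reducing a complex root condition to a single real trigonometric equation. First I would write $F(x)=\sum_{j=0}^{2N}\epsilon_j x^j$ and record the defining palindromy relation $\epsilon_j=\epsilon_{2N-j}$ for $0\le j\le 2N$. The central manipulation is to divide by the middle monomial $x^N$, which is harmless since $x\neq 0$ on the unit circle, and to regroup the resulting Laurent polynomial $F(x)/x^N=\sum_{m=-N}^{N}\epsilon_{m+N}\,x^m$ by pairing the $x^m$ and $x^{-m}$ terms. Palindromy gives $\epsilon_{N+m}=\epsilon_{N-m}$, so these paired coefficients agree and
\[ \frac{F(x)}{x^N}=\epsilon_N+\sum_{m=1}^{N}\epsilon_{N+m}\left(x^m+x^{-m}\right). \]

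Next I would substitute $x=e^{i\theta}$, so that $x^m+x^{-m}=2\cos(m\theta)$, giving
\[ \frac{F(e^{i\theta})}{e^{iN\theta}}=\epsilon_N+2\sum_{m=1}^{N}\epsilon_{N+m}\cos(m\theta). \]
Reindexing with $k=N-m$ and invoking palindromy once more, namely $\epsilon_{N+m}=\epsilon_{2N-k}=\epsilon_k$, converts the right-hand side into exactly $\phi(\theta)=\epsilon_N+2\sum_{k=0}^{N-1}\epsilon_k\cos\!\left((N-k)\theta\right)$. Hence $F(e^{i\theta})=e^{iN\theta}\,\phi(\theta)$.

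Finally, since $e^{iN\theta}$ never vanishes, $e^{i\theta}$ is a root of $F$ precisely when $\phi(\theta)=0$, so the argument of any unit-circle root is governed by the stated relation. The point worth flagging is that this computation also shows $\phi$ is real-valued; the single complex equation $F(e^{i\theta})=0$ therefore collapses to the single real equation $\phi(\theta)=0$. That reality is exactly what the palindromic hypothesis buys, and it is what makes $\phi$ a clean real criterion for unit-circle roots. There is no substantial obstacle in the argument: the only care required is correct bookkeeping in the two reindexings and the observation that the division by $x^N$ is legitimate on the unit circle.
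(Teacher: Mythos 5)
Your proof is correct, but there is nothing in the paper to compare it against: the paper quotes this lemma verbatim from Konvalina and Matache \cite{KoMa04} and gives no proof of its own, so your argument --- divide by $x^N$, pair $x^{m}$ with $x^{-m}$ via the palindromy relation $\epsilon_{N+m}=\epsilon_{N-m}$, substitute $x=e^{i\theta}$ to get $F(e^{i\theta})=e^{iN\theta}\phi(\theta)$, and cancel the nonvanishing factor $e^{iN\theta}$ --- is the standard and essentially the cited one. The bookkeeping checks out: the reindexing $k=N-m$ together with $\epsilon_{N+m}=\epsilon_k$ yields exactly the stated $\phi(\theta)=\epsilon_N+2\sum_{k=0}^{N-1}\epsilon_k\cos\left((N-k)\theta\right)$.

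One small correction to your closing remark: the reality of $\phi$ is not bought by palindromy alone. Palindromy is what collapses the Laurent polynomial into a pure cosine combination, but $\phi$ is real-valued only if the coefficients $\epsilon_j$ are themselves real --- an implicit hypothesis of the lemma (and satisfied in every application in the paper, where $\alpha_n$, $\beta_n$, $\gamma_n$ have integer coefficients). With complex palindromic coefficients the identity $F(e^{i\theta})=e^{iN\theta}\phi(\theta)$ still holds, but $\phi(\theta)=0$ would remain a genuinely complex condition. This is a clarification rather than a gap; the proof itself stands.
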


Applied to our polynomials, it gives the following.

\begin{prop} \label{lem:uargB} 
 For all natural numbers $n$, $\alpha_n(q)$ and $\gamma_n(q)$ have no roots on the unit circle, except possibly $q=\pm1$.  Furthermore, if $q$ is a root of $\beta_n(q)$ on the
  unit circle not equal to 1 then
  \[ \arg{q} \in \left[\pi-\arccos\left(\sqrt{2}-\frac{1}{2}\right),\pi\right) \bigcup \left[-\pi,-\pi+\arccos\left(\sqrt{2}-\frac{1}{2}\right)\right). \]
\end{prop}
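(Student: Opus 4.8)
The plan is to apply the Konvalina--Matache criterion (Lemma~\ref{lem:ucirc}) to each of the three families after putting them in palindromic form, and then to analyze the real trigonometric equation $\phi(\theta)=0$ that the argument $\theta$ of any unit-circle root must satisfy. Writing $q=e^{i\theta}$, for a palindromic $F$ of degree $2N$ one has $F(e^{i\theta})=e^{iN\theta}\phi(\theta)$, so locating unit-circle roots amounts to locating the zeros of the explicit cosine polynomial $\phi$. Each family needs a slightly different final estimate, and I expect $\gamma_n$ to be the delicate case.

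For $\alpha_n$ this is immediate. One checks that $\alpha_n$ is palindromic of degree $4n$ (so $N=2n$), and reading off the nonzero coefficients gives $\phi(\theta)=-4+2\cos(2n\theta)+2\cos(2\theta)$. Since each cosine is at most $1$, we have $\phi(\theta)\le 0$ with equality precisely when $\cos(2n\theta)=\cos(2\theta)=1$; the second equation alone forces $\theta\in\{0,\pi\}$, i.e. $q=\pm1$. Hence $\alpha_n$ has no unit-circle root other than $q=\pm1$, both of which are easily checked to be genuine roots.

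For $\beta_n$ I would exploit the factorisation already displayed in Table~\ref{tab:polys}: writing $R(q)=q^3-2q^2-2q+1$ (itself palindromic), we have $\beta_n=F_1F_2$ with $F_1=q^{2n-1}+Rq^{n-2}+1$ and $F_2=q^{2n+1}+Rq^{n-1}+1$, each palindromic of odd degree. Substituting $\psi=\theta/2$ and dividing $F_j(e^{i\theta})$ by the appropriate central phase, both factors collapse to the same shape,
\[
2\cos\bigl((2n\mp1)\psi\bigr)+2\cos(3\psi)-4\cos\psi=0,
\]
so a unit-circle root forces $\cos\bigl((2n\mp1)\psi\bigr)=5\cos\psi-4\cos^3\psi=:g(\cos\psi)$. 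As the left side has modulus at most $1$, I need $|g(c)|\le 1$ for $c=\cos\psi\in[0,1]$; factoring $4c^3-5c+1=(c-1)(4c^2+4c-1)$ shows this holds only for $c=1$ (giving $q=1$) or $c\le\tfrac{\sqrt2-1}{2}$. The half-angle identity $\cos\theta=2c^2-1$ then converts $c\le\tfrac{\sqrt2-1}{2}$ into $\cos\theta\le\tfrac12-\sqrt2$, which is exactly the stated arc $\arg q\in[\pi-\arccos(\sqrt2-\tfrac12),\pi)\cup[-\pi,-\pi+\arccos(\sqrt2-\tfrac12))$.

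The main obstacle is $\gamma_n$, where a naive triangle-inequality bound on $\phi$ fails because the off-centre coefficients outweigh the central coefficient $14$. Here I would factor out $q^2$ to obtain a genuine palindrome of degree $4n+2$, compute $\phi$, and collapse the cosines with sum-to-product identities until $\phi$ depends on $\theta$ only through $c=\cos\theta$ and $u=\cos(n\theta)$; the crucial step is that $\cos(2n\theta)=2u^2-1$ turns $\phi$ into a quadratic in $u$,
\[
\tfrac14\phi=(2c-1)u^2+(2c-3)(c+1)u+(c-2)(2c^2-3)=:\psi(u,c).
\]
I would then prove $\psi(u,c)>0$ for all $u\in[-1,1]$ and $c\in(-1,1)$, which excludes unit-circle roots other than $q=\pm1$. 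The endpoint values factor as $\psi(1,c)=2(c-1)^2(c+1)$ and $\psi(-1,c)=2(c+1)(c-2)^2$, both positive on $(-1,1)$; a short case split on the sign of the leading coefficient $2c-1$ (the vertex lies outside $[-1,1]$ when $2c-1>0$, and concavity reduces to the endpoints when $2c-1<0$) finishes the box estimate. The delicate point is that $\psi$ degenerates to $(u-1)^2$ as $c\to1$, so the inequality is tight at the corner $(u,c)=(1,1)$ corresponding to $q=1$; the factored endpoint expressions make this tightness transparent. Finally, since the explicit $\phi$ above assumes the three blocks of monomials in $\gamma_n$ do not overlap in degree, the finitely many small $n$ where they do overlap must be verified by direct computation.
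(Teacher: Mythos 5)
Your proposal is correct, and your trigonometric reductions agree exactly with the paper's where they overlap: your $\tfrac14\phi$ for $\gamma_n$ is precisely the paper's $\tfrac12\phi_C$ with $u=\cos(n\theta)$, and your reduction for $\alpha_n$ is twice the paper's $\phi_A$ (your form also makes explicit that $\mathbf{X}$ must be read as $\cos(2n\theta)$ for model $\mathcal{A}$, a point the paper's blanket definition glosses over). Where you genuinely diverge is model $\mathcal{B}$: the paper applies the Konvalina--Matache lemma to the whole degree-$4n$ product, obtains $\phi_B$ as a quadratic in $\mathbf{X}=\cos(n\theta)$, and minimizes over $\mathbf{X}\in[-1,1]$ via endpoints and the vertex, landing on the quartic condition $-4\cos^4\theta+12\cos^3\theta-5\cos^2\theta-10\cos\theta+7\le0$; you instead treat each odd-degree palindromic factor separately via the half-angle substitution $\psi=\theta/2$, arriving at the necessary condition $\cos\bigl((2n\mp1)\psi\bigr)=5c-4c^3$ and the cubic factorization $(c-1)(4c^2+4c-1)$, so the arc boundary $\cos\theta=\tfrac12-\sqrt2$ drops out of a quadratic root in $c$ rather than a quartic in $\cos\theta$ --- arguably cleaner, and since the proposition only asserts a necessary condition you do not even need the lower bound $g(c)\ge-1$ (which is in any case automatic on $[0,1]$, as $4c^3-5c-1=(c+1)(4c^2-4c-1)\le0$ there). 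For $\gamma_n$ your endpoint factorizations $\psi(1,c)=2(c-1)^2(c+1)$ and $\psi(-1,c)=2(c+1)(c-2)^2$, together with the check that the vertex satisfies $u^*>1$ when $c>\tfrac12$, supply exactly the details the paper compresses into ``an analogous argument,'' and both check out. Two small remarks: your caution about degree overlaps in $\gamma_n$ for small $n$ is unnecessary --- the substitution $q=e^{i\theta}$ and division by the central phase are linear in the coefficients, so the collapsed trigonometric identity holds verbatim even when the monomial blocks collide (each block is individually palindromic about $q^{2n+3}$); and you should dispatch the degenerate case $c=\tfrac12$ for $\gamma_n$, where $\psi(u,c)$ is linear in $u$ and hence minimized at an endpoint, which your case split on the sign of $2c-1$ silently omits. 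Note also that your half-angle treatment quietly sidesteps the fact that the quoted lemma is stated only for even-degree palindromes --- a feature, not a bug, of analyzing the factors of $\beta_n$ directly.
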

\begin{proof}
  As $\alpha_n(q),\beta_n(q)$, and $\gamma_n(q)$ are palindromic, Lemma~\ref{lem:ucirc} implies, after some trigonometric simplification, that
  the argument of any root $q$ on the unit circle satisfies
\begin{align*}
  \phi_A(\theta) &= \mathbf{X} + 2\cos^2(\theta)-3 \\
  \phi_B(\theta) &= {\mathbf{X}}^2+\left(2\cos^2(\theta)-\cos(\theta)-3\right){\mathbf{X}}+2\cos^3(\theta)-4\cos^2(\theta)-\cos(\theta)+4  \\
  \phi_C(\theta) &= 2\left(2\cos(\theta)-1\right){\mathbf{X}}^2+\left(4\cos^2(\theta)-2\cos(\theta)-6\right){\mathbf{X}}+4\cos^3(\theta)-8\cos^2(\theta)-6\cos(\theta)+12 
\end{align*}
  respectively, where ${\mathbf{X}}=\cos(n\theta)$.
  
   It is easy to see that $\phi_A(\theta)=0$ only if $\theta=0$ or $\theta=\pi$.  For the other models, in order to give a bound on where the roots of each expression lie we treat~${\mathbf{X}}$ as an independent real variable lying in the range $[-1,1]$ for some fixed value of~$\theta$, and determine where the minimum value of the above expression is at most zero.  

First consider $\phi_B(\theta)$.  As this is a quadratic in
${\mathbf{X}}$ it attains its minimum value either at
${\mathbf{X}}=\pm1$ or when
${\mathbf{X}}=-(2\cos^2(\theta)-\cos(\theta)-3)/2$.

Substituting~${\mathbf{X}}=\pm1$ into $\phi_B(\theta)$ yields expressions which are always
greater than zero when $\theta \notin \{0,\pi\}$.  Furthermore, when ${\mathbf{X}}=-(2\cos(\theta)^2-\cos(\theta)-3)/2$ our expression for
$\phi_B(\theta)$ simplifies to
\[ \frac14 \left( -4\cos^4(\theta)+12\cos^3(\theta)-5\cos^2(\theta)-10\cos(\theta)+7\right). \]
One can verify that this is at most zero only when $\theta=0$ or
\[\theta \in \left[\pi-\arccos\left(\sqrt{2}-\frac{1}{2}\right),\pi\right) \bigcup \left[-\pi,-\pi+\arccos\left(\sqrt{2}-\frac{1}{2}\right)\right). \]

As $\phi_C(\theta)$ is also a quadratic in $\mathbf{X}$ an analogous argument shows that $\phi_C(\theta)=0$ only if $\theta \in \{0,\pi\}$, as desired.
\end{proof}

Thus, every point on the unit circle is a limit point of each of the sets
\[ \{q : \alpha_n(q)=0 \text{ for some $n$}\}, \{q : \beta_n(q)=0 \text{ for some $n$}\}, \text{ and } \{q : \gamma_n(q)=0 \text{ for some $n$}\},\]
but no element of these sets lies on the unit circle (except in a special region when dealing with model $\mathcal{B}$).  In fact, as the polynomials are palindromic, a straightforward application of Rouch\'e's theorem proves that all roots of $\alpha_n,\beta_n,$ and $\gamma_n$ converge to the unit circle as $n$ approaches infinity.

\subsubsection{Step 3: Verify that $Y_n(1)$ has some singularities}
\label{sec:roots}
At this point we have not yet completely established that the $Y_n(1)$
actually have singularities. Theoretically, it is possible that all
the roots were added in our manipulations to determine $\sigma_n(q)$
for the different models (as mentioned above, the roots of
$\sigma_n(q)$ are either singularities of $Y_n(1)$ or singularities of
$Y_{-n}(1)$). Thus, we prove Lemma~\ref{lemma:rootABC} which describes
at least some region where we are certain to find roots of
$\y{n}$. Experimentally, it seems that the roots are evenly
partitioned so that those outside the unit circle belong to~$\y{n}$
and those inside the unit circle belong to~$\y{-n}$, but we do not
prove this.

\begin{lemma} \label{lemma:rootABC} For each model, if $\arg(q) \in (-\pi/2,-3\pi/8)
  \cup (3\pi/8,\pi/2)$ then $\y{n} = \y{-n}|_{q\mapsto 1/q}$ for
  all $n$. Consequently $Y_n$ admits at least one singularity in the complex
  $q$-plane in that region, for an infinite number
  of $n$.
\end{lemma}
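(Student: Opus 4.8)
The plan is to prove the functional identity first and then read the singularity statement off it. For the identity $\y{n}=\y{-n}|_{q\mapsto 1/q}$ I would induct on $n$ using the linear recurrences of Table~\ref{tab:sym-rec}. The structural point is that every one of these recurrences has coefficients — namely $q+1/q$, together with the constant term $-1$ for $\mB$ and $\mC$ — that are invariant under $q\mapsto 1/q$, and that the reflected iterates $\y{-n}$ satisfy the identical recurrence as $\y{n}$, differing only in the initial term $\y{-1}$ (built from $Y_-$ rather than $Y_+$), as recorded in Step~2. Consequently the function $g_n(q):=\y{-n}|_{q\mapsto 1/q}$ obeys the same second-order recurrence in $n$ as $\y{n}(q)$. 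Since a second-order recurrence is pinned down by two initial values, $g_n\equiv\y{n}$ throughout the region as soon as the two base cases match; the case $n=0$ is trivial ($\y{0}\equiv 1$), so everything reduces to the single identity $\y{1}(q)=\y{-1}(1/q)$.

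To establish this base case I would use the explicit solutions for $\y{\pm1}=1/Y_{\pm1}(1;q)$ from Section~\ref{sec:explicit}. In each model these have the common shape $cq/\bigl(N(q)\mp R(q)\bigr)$, where $R(q)=\sqrt{\Delta(q)}$ is the branch of the kernel discriminant analytic at $q=0$ with $R(0)=1$, the non-radical part satisfies $q^2N(1/q)=N(q)$, and $\Delta$ is palindromic so that $\Delta(1/q)=\Delta(q)/q^4$. A short manipulation then reduces $\y{1}(q)=\y{-1}(1/q)$ to the single branch relation $q^2R(1/q)=-R(q)$. Both sides square to $\Delta(q)$, so $q^2R(1/q)=\pm R(q)$ with a sign that is locally constant off the zeros of $\Delta$; I would fix it by evaluating on the imaginary axis $q=it$, where $\Delta>0$ and hence $R>0$ by continuity from $q=0$, forcing the sign $-1$. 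It then remains to check that the whole sector $\arg q\in(3\pi/8,\pi/2)\cup(-\pi/2,-3\pi/8)$ is joined to the imaginary axis by a path avoiding the zeros of $\Delta$ (and those of $\Delta(1/\cdot)$). Here the models differ only mildly: for $\mA$ and $\mC$ all four zeros of $\Delta$ are real, while for $\mB$ two are real and two lie on the unit circle near $\arg q=\pm\bigl(\pi-\arccos(\sqrt2-\tfrac12)\bigr)\approx\pm 156^\circ$ — consistent with Proposition~\ref{lem:uargB}. In every case these zeros stay well clear of the stated sector, which is the role played by the bound $3\pi/8$.

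For the consequence I would track the zero sets $Z_n$ and $Z_{-n}$ of $\y{n}$ and $\y{-n}$ inside the region; these are exactly the singularities of $Y_n$ and $Y_{-n}$ there. The region is invariant under $q\mapsto 1/q$, which swaps its two arcs, and the identity gives $Z_n=1/Z_{-n}$, so $Z_n$ and $Z_{-n}$ are simultaneously empty or nonempty. By Step~2 the roots of $\sigma_n$ lying in the region are precisely $Z_n\cup Z_{-n}$. Finally, Corollary~\ref{cor:dense} makes the roots of $\sigma_n$ dense on the unit circle, and Proposition~\ref{lem:uargB} guarantees that none lie on the circle inside this sector; fixing a target point on the unit circle with argument in $(3\pi/8,\pi/2)$ then yields, for all large $n$, a genuine root of $\sigma_n$ in the region. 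Hence $Z_n\cup Z_{-n}\neq\emptyset$, so $Z_n\neq\emptyset$, for infinitely many $n$, which is exactly the assertion that $Y_n$ has a singularity in the region for infinitely many $n$.

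The step I expect to be the main obstacle is the branch analysis of the second paragraph. The reduction to the base case and the final counting argument are essentially formal, but correctly tracking the sign in $q^2R(1/q)=\pm R(q)$ and verifying that the sector $\arg q\in(3\pi/8,\pi/2)\cup(-\pi/2,-3\pi/8)$ sits in a single branch-cut-free component containing the imaginary axis is delicate, with $\mB$ — whose discriminant has non-real zeros on the unit circle — being the case that governs the precise angular bound.
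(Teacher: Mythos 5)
Your overall architecture is the same as the paper's: the paper likewise reduces everything to the base identity $\y{1}(q)=\y{-1}(1/q)$, using the closed forms of Table~\ref{tab:sym-rec} (whose coefficients, written in terms of $q-q^{-1}$ and $q^n-q^{-n}$, are manifestly invariant under $q\mapsto 1/q$ --- equivalent to your induction), and your reconstruction of the ``consequently'' part from Step~2, Corollary~\ref{cor:dense} and Proposition~\ref{lem:uargB} is a faithful and correctly fleshed-out version of what the paper leaves implicit. The genuine gap is in the step you yourself flag as the crux: the sign anchor. Your claim that $\Delta>0$ on the imaginary axis is true only for model $\mA$, where $\Delta(it)=t^4+6t^2+1$. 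For $\mB$ and $\mC$ the discriminants $q^4-2q^3-5q^2-2q+1$ and $q^4-2q^3-9q^2-2q+1$ contain odd powers of $q$, so $\Delta(it)=t^4+5t^2+1+2it(t^2-1)$, respectively $t^4+9t^2+1+2it(t^2-1)$, which is non-real for all $t\neq 0,1$; the anchor gives no sign information there, and two of the three models are left unproved. A secondary problem: ``sign locally constant off the zeros of $\Delta$'' is valid only for an honest analytic branch $R$, whereas $\y{\pm1}$ are defined via the \emph{principal} square root, which jumps across every curve on which $\Delta(q)$ is negative real, not merely at the zeros of $\Delta$. These curves exist well away from the zeros: for $\mA$, writing $q=x+iy$ one finds $\Im\Delta=4xy(x^2-y^2-3)$ and $\Re\Delta=-4y^4-12y^2-8<0$ along the entire hyperbola $x^2-y^2=3$, so the principal branch is discontinuous there even though all four zeros of $\Delta$ are real. (The sector does avoid this hyperbola, since $|\arg q|<\pi/4$ on it, but that verification is part of the proof, and the analogous curves for $\mB$ and $\mC$ are not located in your write-up.)

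The paper sidesteps both problems with a pointwise argument computation rather than continuation from an anchor: for $\arg q\in(-\pi/2,-3\pi/8)$ it checks $\Re\Delta,\Im\Delta\geq 0$, hence $\arg\Delta\in[0,\pi/2]$, and $\arg(1/q^4)=-4\theta-2\pi\in(-\pi/2,0)$, which legitimizes the principal-branch splitting $\sqrt{\Delta/q^4}=\sqrt{1/q^4}\,\sqrt{\Delta}$ and gives $q^2\sqrt{1/q^4}=e^{2i\theta}e^{i(-4\theta-2\pi)/2}=e^{-i\pi}=-1$ directly. This also explains the precise angular bound $3\pi/8$, which you attributed to the location of the zeros of $\Delta$: in fact it is chosen so that $q^4$ and $-q^2$ lie simultaneously in the closed first quadrant, forcing $\arg\Delta\in[0,\pi/2]$ (for $\mB$ and $\mC$ the extra terms $-2q^3-2q$ must be controlled as well, which is why the paper does the check per model). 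Your proof becomes correct if you replace the imaginary-axis anchor by this per-model argument tracking; as written, the base case fails for $\mB$ and $\mC$ and is incompletely justified for $\mA$.
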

The proof requires only basic manipulations of the formulas. 
We offer the proof for the $\mA$ case, the other two are similar. 

\begin{proof}[$\mA$ case] First, note that it is sufficient to prove the result for $\arg(q) \in (-\pi/2,-3\pi/8)$.  We claim that in this region the identity $\y{1}=\y{-1}|_{q\mapsto 1/q}$ holds, which is equivalent to proving $q^2\sqrt{ (q^4-6q^2+1)/q^4 } = -\sqrt{q^4-6q^2+1}$ (using the standard branch-cut of the square root). 
  
    If $q=re^{i\theta}$ in polar form it is straight forward to verify $\Re\left( q^4-6q+1 \right),\Im\left( q^4-6q+1 \right) \geq 0$ for the values of $\theta$ under consideration, so $\arg(q^4-6q^2+1) \in [0,\pi/2]$.  Furthermore, for these values of $\theta$ we have $\arg(1/q^4) \in (-\pi/2,0)$ so $\arg(q^4-6q^2+1) + \arg(1/q^4) \in (-\pi/2,\pi/2]$, and
    \[ q^2\sqrt{ (q^4-6q^2+1)/q^4 } = q^2\sqrt{1/q^4}\sqrt{(q^4-6q^2+1)}.\]
    By our choice of region, $\arg(1/q^4) = -4\theta-2\pi$ and thus $q^2\sqrt{1/q^4}=r^2e^{2i\theta}\cdot\frac{1}{r^2}e^{i(-4\theta-2\pi)/2} = e^{-\pi i} =-1$, proving the result on $\y{1}$.

Given this, we note
\[ \y{-n} = \frac{(q^2-q^{2n}) + q(q^{2n}-1)\y{-1}}{q^n(q^2-1)}=\frac{q^{1-n}-q^{n-1}}{q-q^{-1}}+\frac{q^n-q^{-n}}{q-q^{-1}}\y{-1},
\]
so that
\[ \left.\y{-n}\right|_{q\rightarrow1/q} =\frac{q^{1-n}-q^{n-1}}{q-q^{-1}}+\frac{q^n-q^{-n}}{q-q^{-1}}\y{1},\]
by the base case, as the rest is invariant, and thus $\y{-n}(1/q) = \y{n}(q)$.
\end{proof}

As the region considered above is disjoint from the region where the roots of $\beta_n(q)$ lie on the unit circle, all the singularities we have found lie off of the unit circle.

%

\subsubsection{Step 4: The singularities are distinct} 
\label{sec:Distinct}
We prove that the poles are distinct when they lie off of the unit
circle by determining expressions for the powers of $q$ at the poles
of the $Y_n$.

\begin{prop} \label{prop:Distinct} For models $\mathcal{A}$ and
  $\mathcal{C}$, if $q_n$ is a pole of $Y_n$ which lies off of the
  unit circle then it is not a pole of $Y_k$ for $k\neq n$. For model
  $\mathcal{B}$, if $q_n$ is a pole of $Y_n$ off of the unit circle
  then it is not a pole of $Y_k$ for $|k-n|>1$.
\end{prop}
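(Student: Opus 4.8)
The plan is to convert ``$\rho$ is a pole of $Y_n$'' into an algebraic statement about the single quantity $\rho^n$. The key structural observation is that the polynomials $\sigma_n$ of Table~\ref{tab:polys}, when read as polynomials in $v=q^n$, have coefficients that are Laurent polynomials in $q$ \emph{independent of $n$}: write $\sigma_n(q)=\Sigma(q;q^n)$ for this fixed bivariate $\Sigma$. Since every pole of $Y_n$ is a root of $\sigma_n$, a point $\rho$ that is simultaneously a pole of $Y_n$ and of $Y_k$ satisfies $\Sigma(\rho;\rho^n)=\Sigma(\rho;\rho^k)=0$; that is, \emph{both} $\rho^n$ and $\rho^k$ are roots of the one‑variable polynomial $\Sigma(\rho;\cdot)$. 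I would then factor $\Sigma(\rho;\cdot)$ into quadratics and extract constraints via Vieta's formulas, using throughout that $\rho$ is off the unit circle, so that $\rho^j=1$ forces $j=0$ and $\rho^j=-1$ is impossible for every integer $j$.

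For model \mA, $\Sigma(q;v)=v^4+(q^2-4+q^{-2})v^2+1$ is a quadratic in $w=q^{2n}$ with constant term $1$, so its two $w$‑roots have product $1$. If $\rho$ is a shared pole then $\rho^{2n}$ and $\rho^{2k}$ are these roots: either $\rho^{2n}=\rho^{2k}$, giving $\rho^{2(n-k)}=1$, or $\rho^{2n}\rho^{2k}=1$, giving $\rho^{2(n+k)}=1$. Both force $|\rho|=1$, a contradiction. This is the clean case and fixes the template.

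For model \mB, the displayed factorization of $\beta_n$ shows that $\Sigma(\rho;\cdot)$ splits, in $v=q^n$, into $F_1=\rho v^2+fv+\rho^2$ and $F_2=\rho^2v^2+fv+\rho$ with $f=\rho^3-2\rho^2-2\rho+1$, whose root‑products are $\rho$ and $1/\rho$. If $\rho^n,\rho^k$ lie in the same factor then $\rho^{n+k}=\rho^{\pm1}$, i.e.\ $\rho^{n+k\mp1}=1$, impossible off the circle as $n+k\ge 2$. If instead $\rho^n$ solves $F_1$ and $\rho^k$ solves $F_2$, I eliminate the shared coefficient $f$ between $\rho^{2n+1}+f\rho^n+\rho^2=0$ and $\rho^{2k+2}+f\rho^k+\rho=0$; clearing denominators factors the outcome as $(\rho^{n+k}-1)(\rho^n-\rho^{k+1})=0$. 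Off the circle the first factor is nonzero, so $\rho^{n-k-1}=1$, forcing $n-k=1$ (and the symmetric assignment forces $k-n=1$). Hence an off‑circle shared pole can occur only when $|n-k|=1$, exactly the stated exception; for $|n-k|>1$ every case is contradictory.

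For model \mC, $\gamma_n$ is palindromic in $v=q^n$, so dividing by $v^2$ and setting $\mu_n=q^n+q^{-n}$ converts $\gamma_n(\rho)=0$ into a \emph{quadratic in $\mu_n$} whose coefficients depend only on $m=q+q^{-1}$, namely $(m-1)\mu_n^2+(m-3)(m+2)\mu_n+(m-4)(m^2-6)=0$. Thus $\mu_n$ and $\mu_k$ are the two roots of one quadratic in $Z$. If $\mu_n=\mu_k$ then $\rho^n=\rho^{\pm k}$, giving $\rho^{n\mp k}=1$ and $|\rho|=1$; otherwise Vieta yields $\mu_n+\mu_k=-(m-3)(m+2)/(m-1)$ and $\mu_n\mu_k=(m-4)(m^2-6)/(m-1)$, and I must show this system has no solution with $m\notin[-2,2]$ (equivalently $|\rho|\ne 1$) and $n\ne k$. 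This is the main obstacle: unlike \mB, the two palindromic factors share no rational coefficient to eliminate, so one instead studies the discriminant, which factors as $-3(m-2)^2(m^2-2m-11)$. This forces $\mu_n-\mu_k$ to equal, up to the rational factor $(m-2)/(m-1)$, a square root of $-3(m^2-2m-11)$; feeding this back through the identity $\mu_n\mu_k=\mu_{n+k}+\mu_{|n-k|}$ and the Chebyshev expression of each $\mu_j$ as a polynomial in $m$ produces a polynomial relation in $m$ that I would argue has no root off the segment $[-2,2]$, completing the \mC\ case. Because passing to $m$ identifies $\rho$ with $1/\rho$ in a faithful way (an off‑circle root of $\gamma_n$ in $q$ corresponds exactly to $m\notin[-2,2]$), this reduction loses nothing; the heavier elimination, not any conceptual gap, is what makes \mC\ carry the bulk of the technical work.
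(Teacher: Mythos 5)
Your arguments for models $\mA$ and $\mB$ are correct and complete, and they are essentially the paper's own mechanism in a slightly different dress: the paper solves the numerators of the explicit expressions for $\y{n}$ as quadratics in $q^n$, obtaining two $n$-independent root functions $r_1(q),r_2(q)$, and then compares $q_n^n$ with $q_n^k$ via the ratio $r_1/r_2$ (which is $-1$ for $\mA$, forcing $|q_n|=1$, and $q$ for $\mB$, forcing $n=k+1$); your Vieta computations on $\Sigma(\rho;\cdot)$ encode exactly the same products and ratios, and your elimination of $f$ for $\mB$ reproduces the paper's conclusion. (Working with $\sigma_n$ rather than with the branch-specific numerators even gives a marginally stronger statement, covering the extraneous roots belonging to $\y{-n}$, which is harmless.)

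The genuine gap is in model $\mC$, and you have located it yourself but not closed it. Your reduction to the quadratic $(m-1)\mu^2+(m-3)(m+2)\mu+(m-4)(m^2-6)=0$ and the discriminant factorization $-3(m-2)^2(m^2-2m-11)$ are correct, but the concluding step --- ``feeding this back through $\mu_n\mu_k=\mu_{n+k}+\mu_{|n-k|}$ \dots\ produces a polynomial relation in $m$ that I would argue has no root off the segment $[-2,2]$'' --- is not an argument. The relation you obtain is not a single polynomial: since each $\mu_j$ is a Chebyshev-type polynomial in $m$ of degree $j$, you get a different relation for every pair $(n,k)$, of degree growing with $n+k$, and you must exclude off-segment roots \emph{uniformly over this infinite family}. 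That uniform exclusion is the entire content of the $\mC$ case, and nothing in your sketch supplies it; a Gr\"obner-basis or resultant computation cannot be run ``generically in $n$'' here because $n$ enters through the degree. The paper resolves precisely this difficulty by a modulus/argument analysis instead of elimination: it computes the explicit ratio
\[
q_n^{\,n-k}=\frac{r_1(q_n)}{r_2(q_n)}=e^{-2\pi i/3}+\sqrt{3}\,\frac{e^{-\pi i/6}}{q-e^{\pi i/3}},
\]
shows the right-hand side has modulus $\geq 1$ on the closed upper half-plane and $\leq 1$ on the lower, and then uses the conjugation symmetry $\y{n}(q)^{*}=\y{n}(q^{*})$ together with a direct check on the real axis to rule out all off-circle solutions at once, uniformly in $n-k$. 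To repair your proof you would need an analogous uniform estimate --- for instance, a bound showing the two roots of your quadratic in $\mu$ (equivalently of the quadratic in $q^n$) have moduli separated from each other in a half-plane-dependent way --- rather than a root-locus claim for an unbounded family of polynomials.
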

\begin{proof}
  For each of the three models we find the roots of the numerators of
  our explicit expressions in Table~\ref{tab:sym-rec} as quadratics in
  $q^n$.  This determines functions $r_1(q)$ and $r_2(q)$, independent
  of $n$, such that $q_n^n=r_1(q_n)$ or $q_n^n=r_2(q_n)$ at any pole
  $q_n$ of $Y_n$.  

  Now, suppose $q_n$ is also a pole of $Y_k$
  for $k \neq n$, so that $q_n^k=r_1(q_n)$ or $q_n^k=r_2(q_n)$.  If 
  $q_n^k = r_1(q_n) = q_n^n$ or $q_n^k = r_2(q_n) =
  q_n^n$ then it is immediate that $q_n$ must be on the unit circle.  Thus we may assume,
  without loss of generality, that $q_n^n = r_1(q_n)$ and
  $q_n^k=r_2(q_n)$ -- we consider each model separately.

\begin{itemize}
\item \textit{(Model \mA)} Here, 
\[ r_1(q),r_2(q) = \pm \frac{2 \sqrt{-q^2}}{q^2+\sqrt{1-6q^2+q^4}-1}, \]
so that $q_n^{n-k} = r_1(q_n)/r_2(q_n) = -1$, implying that $q_n$ must lie on the unit circle.
\item \textit{(Model \mB)} Here, 
\[ r_1(q) = \frac{2q^2}{2q^2+\sqrt{q^4-2q^3-5q^2-2q+1}-2q+q\sqrt{q^4-2q^3-5q^2-2q+1}-1-q^3},\]
and $r_2(q) = r_1(q)/q$, so that $q_n^{n-k} = r_1(q_n)/r_2(q_n) = q_n$.  Thus, either $n = k + 1$ or $q_n$ lies on the unit circle.

\item \textit{(Model \mC)} In this slightly trickier case we have
\begin{align*} 
r_1(q) &= \frac{q(-1-q-i\sqrt{3}+\sqrt{3}q)} {-2q^2-\sqrt{1-2q-9q^2-2q^3+q^4}-2q+q\sqrt{1-2q-9q^2-2q^3+q^4}+1+q^3} \\
r_2(q) &= \frac{q(-1-q+i\sqrt{3}-\sqrt{3}q)} {-2q^2-\sqrt{1-2q-9q^2-2q^3+q^4}-2q+q\sqrt{1-2q-9q^2-2q^3+q^4}+1+q^3},
\end{align*}
which implies
\[ q_n^{n-k} =  r_1(q_n)/r_2(q_n) = e^{-2\pi i/3} + \sqrt{3}\frac{e^{-\pi i/6}}{q-e^{\pi i/3}}.\]
Substituting $q_n=re^{i\theta}$ into this expression allows one to see that the right hand side has modulus greater than or equal to one when $\theta \in [0,\pi)$ and modulus less than or equal to one when $\theta \in (-\pi,0]$.  

Suppose now that $n > k$.  If $|q_n| < 1$ then $|q_n|^{n-k} < 1$ and $q_n$ cannot lie above the real axis (as the modulus of the right hand side would be greater than or equal to 1).  Similarly, if $|q_n| > 1$ then $|q_n|^{n-k} > 1$ and $q_n$ cannot lie beneath the real axis.  As we take the principal branch of the square root in the definition of $\y{n}(q)$, we have $\y{n}(q)^{*} = \y{n}(q^*)$ (where $q^*$ denotes the complex conjugate of $q$) so there are, in fact, no solutions off of the unit circle or real axis.  One can easily verify that there are no non-unit real solutions, and when $n<k$ the argument is analogous.
\end{itemize}
\end{proof}

\subsubsection{Step 5: The generating function is not D-finite}  Now we tie up all the arguments.

\begin{theorem}\label{thm:sym-nondfinite}
  The generating functions $A(t)$, $B(t)$, $C(t)$ of walks in the quarter
  plane with steps from $\mA$, $\mB$, and $\mC$ respectively, are all
  not D-finite.
\end{theorem}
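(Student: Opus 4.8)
The plan is to deduce non-D-finiteness of $S(t)$ from the singularity structure of the companion series $S_{1,0}$. Since the class of D-finite functions is closed under algebraic substitution and $q\mapsto q/(1+q^2)$ is algebraic, it suffices to prove that $S_{1,0}\bigl(q/(1+q^2)\bigr)$, viewed as a function of $q$, has infinitely many singularities: a D-finite function of one variable has only finitely many. Once $S_{1,0}(t)$ is known to be non-D-finite, the symmetry relation $S_{0,1}=S_{1,0}$ together with \eqref{eq:S11-sym}, which expresses $S_{1,0}$ as a rational-function combination of $S(t)$, forces $S(t)$ itself to be non-D-finite by closure of the D-finite class under the rational operations. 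So the entire burden is to exhibit infinitely many genuine singularities of the sum in \eqref{eq:S11-sym-q}.

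First I would assemble infinitely many distinct candidate singular points. By Lemma~\ref{lemma:rootABC}, for each model there is an infinite set of indices $n$ for which $Y_n(1)$ has a pole $q_n$ with $\arg(q_n)\in(-\pi/2,-3\pi/8)\cup(3\pi/8,\pi/2)$, and the remark following that lemma places every such $q_n$ off the unit circle; write $R$ for this angular region. Proposition~\ref{prop:Distinct} then shows these points are essentially distinct: for $\mA$ and $\mC$ no $q_n$ is a pole of any other $Y_k$, while for $\mB$ any single value can be a pole of at most two of the iterates, so only finitely many indices share a given value. In every case I can therefore extract an infinite sequence of pairwise distinct points $q_n\in R$.

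The heart of the argument, and the step I expect to be the main obstacle, is to show that $q_n$ is a genuine (non-removable) singularity of the sum rather than an artifact cancelled by the other terms -- exactly the "quirk of the expression" warned about in the introduction. Fix such a $q_n$. Since $q_n$ lies off the unit circle, Proposition~\ref{thm:converge} and its ratio-test estimate give local uniform convergence of \eqref{eq:S11-sym-q} on a small punctured disc about $q_n$ once the finitely many summands whose factors are singular at $q_n$ are removed; hence the remaining tail sums to a function analytic at $q_n$. For $\mA$ and $\mC$ the only singular factor is $Y_n(1)$, which appears in the two consecutive summands $m=n-1$ and $m=n$, so the entire singular contribution is
\[
(-1)^{n-1}Y_{n-1}(1)Y_n(1)+(-1)^{n}Y_n(1)Y_{n+1}(1)=(-1)^{n-1}Y_n(1)\bigl(Y_{n-1}(1)-Y_{n+1}(1)\bigr).
\]
Because $\y{n}=1/Y_n(1)$ satisfies the linear recurrence of Table~\ref{tab:sym-rec} and $q_n$ is a zero of $\y{n}$, evaluating the recurrence at $q_n$ gives, for model $\mA$, $\y{n+1}(q_n)=-\y{n-1}(q_n)$, that is $Y_{n+1}(q_n)=-Y_{n-1}(q_n)$; the bracket is then $2Y_{n-1}(q_n)$, which is nonzero because the explicit formula for $\y{n-1}$ has poles only at $q\in\{0,\pm1\}$ and along the real axis and so stays finite on $R$, forcing $Y_{n-1}(q_n)\neq0$. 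The pole of $Y_n(1)$ therefore survives the sum, and $q_n$ is a genuine singularity.

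It remains to dispose of the degenerate cases for $\mC$ and $\mB$, which is where the real technical work lies. For $\mC$ distinctness again isolates $Y_n(1)$ as the only singular factor, but the recurrence now yields $\y{n+1}(q_n)=-\y{n-1}(q_n)-1$, so with $a=\y{n-1}(q_n)$ (finite and $\neq 0,-1$ by distinctness) the bracket equals $\tfrac{2a+1}{a(a+1)}$, which vanishes only when $a=-\tfrac12$; I would rule this out for all but finitely many $n$ by feeding the extra constraint $2\y{n-1}(q_n)+1=0$ together with $\y{n}(q_n)=0$ into the explicit formulas of Table~\ref{tab:sym-rec} and checking that this over-determined system is incompatible for the roots $q_n$, which approach the unit circle. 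For $\mB$, where $q_n$ may also be a pole of $Y_{n\pm1}$, up to four consecutive summands are singular at $q_n$ and the non-cancellation condition becomes a longer but still elementary expression in $\y{n-2}(q_n),\dots,\y{n+1}(q_n)$ obtained from the same recurrence, which likewise can fail for only finitely many $n$. Discarding these finitely many exceptional indices still leaves infinitely many genuine singularities of $S_{1,0}\bigl(q/(1+q^2)\bigr)$, completing the chain of implications and proving that $A(t)$, $B(t)$ and $C(t)$ are not D-finite.
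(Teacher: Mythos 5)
Your architecture coincides with the paper's own proof: reduce via closure of D-finite functions under algebraic substitution, extract infinitely many pairwise distinct off-circle poles $q_n$ of $Y_n(1)$ from Lemma~\ref{lemma:rootABC} and Proposition~\ref{prop:Distinct}, split the series \eqref{eq:S11-sym-q} at $q_n$ into a finite head, the singular summands, and a tail convergent by the ratio-test argument of Proposition~\ref{thm:converge}, and certify non-cancellation by evaluating the recurrence of Table~\ref{tab:sym-rec} at a zero of $\y{n}$. For model $\mA$ your argument is complete and matches the paper's. But in the two places you yourself flag as ``the real technical work,'' you leave genuine gaps. For model $\mC$ you correctly note that the recurrence alone gives $\y{n+1}(q_n)=-\y{n-1}(q_n)-1$, so the bracket equals $\frac{2a+1}{a(a+1)}$ with $a=\y{n-1}(q_n)$ and could vanish at $a=-\tfrac12$ (a degeneracy the paper's proof silently asserts away, so flagging it is to your credit); however, your proposed repair --- feeding $2\y{n-1}(q_n)+1=0$ and $\y{n}(q_n)=0$ into the explicit formulas and ``checking that this over-determined system is incompatible'' --- is never executed, and the finiteness of the exceptional set (i.e., that eliminating $q^n$ yields a nontrivial condition on $q$ rather than an identity) is precisely what would need proof.

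The more serious defect is model $\mB$. At a point where $q_n$ is a pole of both $Y_n$ and $Y_{n+1}$ --- which Proposition~\ref{prop:Distinct} permits --- your plan of expressing the ``non-cancellation condition'' as an elementary expression in $\y{n-2}(q_n),\dots,\y{n+1}(q_n)$ is ill-posed: some of these reciprocals vanish there, the difference $Y_{n-1}-Y_{n+1}$ is itself infinite, and pointwise evaluation cannot detect cancellation among Laurent principal parts. Moreover your assertion that this condition ``can fail for only finitely many $n$'' is offered without any argument. The paper disposes of this case by a pole-order comparison instead: by Proposition~\ref{prop:Distinct} (applied at indices $n$ and $n\pm1$) at most \emph{three} summands, namely $Y_{n-1}Y_n$, $Y_nY_{n+1}$, $Y_{n+1}Y_{n+2}$, can be singular at $q_n$ (not four, as you state, since $Y_{n-1}$ and $Y_{n+1}$ cannot share a pole off the circle), and the middle summand has pole order equal to the \emph{sum} of the orders of $Y_n$ and $Y_{n+1}$, strictly exceeding that of either neighbour, so cancellation is impossible for every $n$ with no exceptional indices. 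Substituting this order argument for $\mB$, and actually carrying out the elimination for $\mC$, would be required to turn your proposal into a complete proof.
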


\begin{proof} 
We now show that the infinite set of poles described in
Proposition~\ref{prop:Distinct} are indeed poles of the generating
function. The D-finiteness result follows from this and the fact that D-finite
functions have only a finite number of singularities.

Fix a model. For each $n$, there is at least one choice of $q_n$
amongst the poles of $Y_n(1)$ which is not a pole of any $Y_k(1)$ for
$|n-k|>1$. This is a direct consequence of
Proposition~\ref{prop:Distinct}. 

Next, we break the main sum of Equation~\eqref{eq:S11-sym-q} into
three parts, and examine the behaviour at $q_n$ -- let us first consider the cases of models $\mA$ and $\mC$. The sum
is decomposed as follows:
\[ \frac{q}{1+q^2}\cdot S_{0,1} = \underbrace{\sum_{k=0}^{n-2}(-1)^k
  Y_kY_{k+1}}_{\text{a finite sum}} +
\underbrace{(-1)^{n-1}Y_n(Y_{n-1}-Y_{n+1})}_{\text{pole contribution}}+
\underbrace{\sum_{k\geq n+1}(-1)^k Y_kY_{k+1}}_{\text{convergent at
  }q=q_n}.\]

The initial
and terminal sums do not admit poles at $q_n$ since
Proposition~\ref{prop:Distinct} implies in these two cases  $Y_k$ does not have a pole at
$q_n$ for $k \neq n$, and an argument identical to the proof of
Proposition~\ref{thm:converge} implies the second summation is convergent at
this point.  Furthermore, if we substitute $q_n$ into the
corresponding recurrence from Table~\ref{tab:sym-rec}, and recall it
is a zero of $\overline{Y}_n$,
we derive $\overline{Y}_{n+1}(q_n) = - \overline{Y}_{n-1}(q_n) + \epsilon$ (where $\epsilon=0$ for model \mA~and $\epsilon=1$ for models \mB~and \mC), and so $Y_{n-1}-Y_{n+1}\neq0$. We can then conclude that
$q_n$ is a pole of the series, for all $n\geq 1$. 

Thus, we have shown that both  $A\left(\frac{q}{1+q^2}\right)$ and $C\left(\frac{q}{1+q^2}\right)$
have an infinite number of poles, and are not D-finite. The stated
result follows immediately from the fact that the class of D-finite functions is closed under algebraic substitution.

The remaining case of model~\mB~is almost identical, save for the fact
that $Y_n$ and $Y_{n-1}$ share some, but not all, of their poles.  If $q_n$ is not a pole
of $Y_{n+1}$, then the argument above shows that it is a pole of
$B\left(\frac{q}{1+q^2}\right)$.  If $q_n$ is a pole of $Y_{n+1}$,
then the summand $(-1)^nY_nY_{n+1}$ has a pole of a larger order than
the other two summands in which that pole appears, and it
cannot be cancelled by the rest of the summation. This means that 
$q_n$ is again a pole of $B\left(\frac{q}{1+q^2}\right)$, and the
remainder of the argument is as for the other two cases. 
\end{proof}

\subsection{Return to the $t$-plane}
It is useful to visualize the singularities in the $t$-plane as well,
as they control the sub-dominant asymptotics. Figure~\ref{fig:singsT}
contains precisely such a plot. 

The sub-dominant singularities in the $t$-plane appear to converge to
$t=1/2$; in fact for model $\mC$ there are two singularities of
$Y_2(t)$ which have modulus exactly 1/2: $t = -1/4\pm\sqrt{3}i/4$.

\begin{center}\begin{figure}
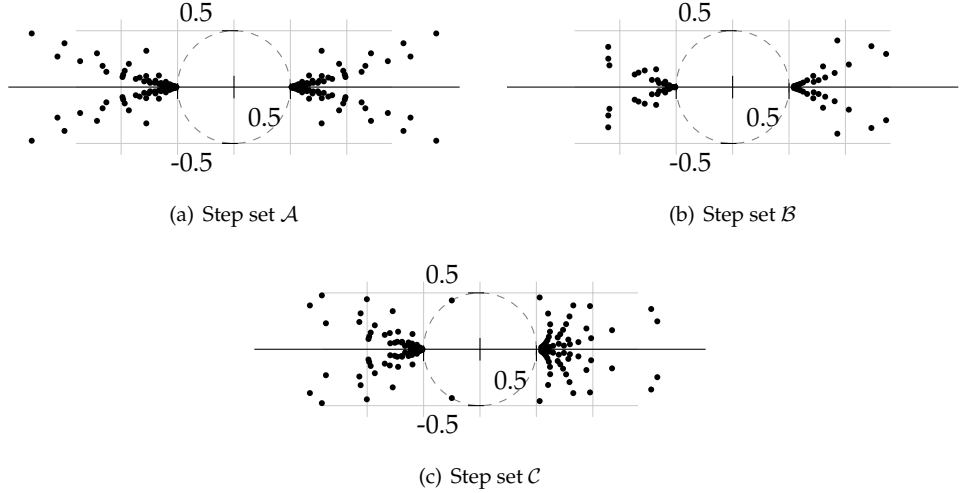

\mbox{ \subfigure[Step set \mA]{\pointplotT{\aTpts}} \quad 
       \subfigure[Step set \mB]{\pointplotT{\bTpts}}}
\mbox{ \subfigure[Step set \mC]{\pointplotT{\cTpts}} }
\caption{All singularities from $Y_1(t),\dots,Y_{15}(t)$ for the symmetric models; these form the sub-dominant singularities of $S_{1,1}(t)$.  The curve $|t|=1/2$ is sketched.}
\label{fig:singsT}
\end{figure}\end{center}

\section{Asymmetric models} 
\label{sec:asymmodels}
The asymmetric models are not substantially different, but when we
iterate we have more functions to track. 
Aside from some irritating bookkeeping, there is no main
obstacle to following the strategy of the symmetric models.
  
\subsection{An explicit generating function expression}
To obtain the generating function expressions we follow the same path
as in the symmetric case: we generate a sequence of equations, each
which annihilates the kernel. This opens up the possibility of a
telescoping sum expression from which we can find an expression for
the generating function of walks returning to the axis in terms of iterates. 
An explicit expression for these iterates is obtained
by solving some very simple recurrences. We complete these steps for
the asymmetric models in this section.

As before, we begin with the main functional
equation~\eqref{eqn:fund}, make the substitution~$t=q/(1+q^2)$, and
re-arrange to get the kernel equations: 
\begin{align*}
    \left(xy(1+q^2)-qy^2-qxy^2-qx^2\right)D_{x,y}(t) &= xy(1+q^2) - qx^2D_{x,0}(t)- qy^2D_{0,y}(t) \\
    \left(xy(1+q^2)-qy^2-qxy^2-qx^2y^2-qx^2\right) E_{x,y}(t) &= xy(1+q^2) - qx^2E_{x, 0}(t) - qy^2E_{0, y}(t),
\end{align*}
with kernels
\begin{align*}
   \text{Model } \mD:  \qquad K(x,y) &= -q(1+x)y^2+(1+q^2)xy-qx^2 \\
      \text{Model } \mE:  \qquad K(x,y) &= -q(1+x+x^2)y^2+(1+q^2)xy-qx^2.
\end{align*}
As there is no longer an $x=y$ symmetry, we solve the
kernels as functions of both $x$ and $y$; that is, we find $Y(x) $
satisfying $K(x,Y(x))=0$ and also $X(y)$ satisfying $K(X(y), y)=0$.
We have some choice over how we split the solutions over different
branches.  One such choice of branches is: 
{ \begin{align*}
\text{Model } \mD:\quad\\
     {X}_{\pm}(y;q) &= \frac{y}{2q} \cdot \left( 1-qy+q^2\mp\sqrt{q^4-2q^3y+(y^2-2)q^2-2qy+1} \right)\\
     {Y}_{\pm}(x;q) &= \frac{x}{2q(1+x)}\cdot \left( 1+q^2\mp\sqrt{
         q^4-4q^2x-2q^2+1} \right), \\
\text{Model } \mE:\quad\\
     {X}_{\pm}(y;q) &=  \frac{y}{2q(1+y^2)} \cdot \left( 1-qy+q^2 \mp\sqrt{ q^4-2q^3y-(3y^2+2)q^2-2qy+1 } \right)\\
     {Y}_{\pm}(x;q) &= \frac{x}{2q(1+x+x^2)} \cdot \left(
       1+q^2\mp\sqrt{ q^4-2(2x^2+2x+1)q^2+1} \right).
  \end{align*}}
Next, as we described in the introductory summary, we repeatedly alternate the substitution of the $X$ and $Y$ and create two related sequences of functions:
\begin{equation*}
 \chi_n(x) = X_+ (Y_+ (\chi_{n-1}(x);q);q), \quad \chi_0(x)=x \quad\text{and}\quad
 \Upsilon_n(y) = Y_+ (X_+(\Upsilon_{n-1}(y);q);q),  \quad \Upsilon_0(y)=y.
\end{equation*}
Simple substitutions yield the kernel relations
\[K(\chi_n(x),
Y_+(\chi_n(x)))=K(X_+(\Upsilon(y)),\Upsilon(y))=0,\] amongst others. As
before, we generate an infinite list of relations by substituting
$x=\chi_n(x)$, $y= Y_+(\chi_n(x))$, and then a second infinite list
using the substitutions~$x=X_+(\Upsilon(y))$, $y=\Upsilon(y)$. Again,
we form a telescoping sum, and after some manipulation this results in
an expression for the generating functions of the walks returning to the axis. For
$S\in\{D, E\}$ we have:
\begin{align}
S_{x,0}\left(\frac{q}{1+q^2}\right) &=
\frac{q}{1+q^2}\Sum_{n\geq 0}\chi_n(x)\cdot
\underbrace{\left(Y_+\circ\chi_n(x)-Y_+\circ\chi_{n-1}(x)\right)}_{\Delta_{L,n}(x)} \label{eq:Sx0}\\
S_{0,y}\left(\frac{q}{1+q^2}\right) &= 
\frac{q}{1+q^2}\Sum_{n\geq 0}X_+\circ\Upsilon_n(y)\cdot
\underbrace{\left(\Upsilon_n(y)-\Upsilon_{n+1}(y)\right)}_{\Delta_{R,n}(y)}.
\end{align}
The two models have identical structure in their generating function, and differ only in their respective functions $X_+$ and $Y_+$. 
Our greatest challenge at this point is keeping track of the various parts:
\begin{align*}
 \Delta_{L,n}(x) &= Y_+\circ\chi_n(x)-Y_+\circ\chi_{n-1}(x)&&\Delta_{R,n}(y) = \Upsilon_n(y) - \Upsilon_{n+1}(y)&&\\
 \Delta_{L,0}(x) &= Y_+(x) &&\Delta_{R,0}(y) = \Upsilon_0(y).
  \end{align*}
For each model we isolate the left and right hand sides, defining $L(x,q) = qx^2S_{x,0}(q/(1+q^2))$ and $R(y,q) = qy^2S_{0,y}(q/(1+q^2))$, so that
\[ S_{x,y}(q/(1+q^2)) = \frac{xy(1+q^2) - L(x,q) -
  R(y,q)}{K(x,y)}, \]
and the counting generating function has the form
\begin{equation}\label{eq:asymgf-q}
S(q/(1+q^2))= \frac{(1+q^2) - L(1,q) -R(1,q)}{1-K(1,1)}.
\end{equation}
For both asymmetric models we find an infinite set of points at which $L(1,q)$ is
singular, but~$R(1,q)$ is convergent.

Similar to previous cases, we can use the coefficients of $K(x,y)$
and the facts that
\[ Y_{\pm}\left(X_{\mp}(y)\right) = y \qquad X_{\pm}\left(Y_{\mp}(x)\right) = x, \] 
to form paired up recurrences
for the multiplicative inverses of these functions. Here we again use the
notation that $\overline{F}=\frac{1}{F}$:
\begin{equation}\label{eq:initialsystem}
\begin{aligned}
    \overline{\chi}_{n} &= (q+1/q)\overline{Y_+\circ\chi}_{n-1} -
    \overline{\chi}_{n-1}-1,&& 
 \qquad \overline{Y_+\circ\chi}_{n} = (q+1/q)\overline{\chi}_{n} - \overline{Y_+\circ\chi}_{n-1} \\
    \overline{\Upsilon}_{n} &=
    (q+1/q)\overline{X_+\circ\Upsilon}_{n-1} -
    \overline{\Upsilon}_{n-1},&& \qquad
    \overline{X_+\circ\Upsilon}_{n} = (q+1/q)\overline{\Upsilon}_{
      n}-\overline{X_+\circ\Upsilon}_{n-1}-1.
 \end{aligned}
\end{equation}

  Solving these recurrences, we obtain the closed form expressions: 
\begin{equation}\label{eq:closedform}
\begin{aligned}  
    \overline{\chi}_n &= \frac{ (q^{4n+3}-q^{4n+1}-q^3+q)\overline{Y}_+-2q^{4n+2}+q^{4n}+2q^{2n+2}+q^4-2q^2 }{ q^{2n}(q^2-1)^2 } \\[2mm]
    \overline{Y_+\circ\chi}_n &= \frac{ (q^{4n+4}-q^{4n+2}-q^2+1)\overline{Y}_+-2q^{4n+3}+q^{4n+1}+q^{2n+3}+q^{2n+1}+q^3-2q }{ q^{2n}(q^2-1)^2 } \\[2mm]
    \overline{\Upsilon}_n &= \frac{ (q^{4n+3}-q^{4n+1}-q^3+q)\overline{X}_+-q^{4n+2}-q^{4n+1}+q^{4n}+q^{2n+3}+q^{2n+1}+q^4-q^3-q^2 }{ q^{2n}(q^2-1)^2 } \\[2mm]
    \overline{X_+\circ\Upsilon}_n &= \frac{ (q^{4n+4}-q^{4n+2}-q^2+1)\overline{X}_+-q^{4n+3}-q^{4n+2}+q^{4n+1}+2q^{2n+2}+q^3-q^2-q }{ q^{2n}(q^2-1)^2 }.
 \end{aligned}
\end{equation} 

We next show that our expressions for $L(1,q)$ and $R(1,q)$ in terms of $\chi_n,  Y_+\circ\chi_n, \Upsilon_n,$ and $X_+\circ\Upsilon_n$ are valid for almost all of the complex plane.
  \begin{prop} \label{prop:AsymConv}
   For either
   $\mD$ or $\mE$, let $q \in \mathbb{C}$ such that $|q|\neq1$ and $\chi_n,\Delta_{L,n},\Delta_{R,n},$
   and $X_+\circ\Upsilon_n$ are all analytic. Then the related series $L(1,q)$ and $R(1,q)$ both converge for~$q\in\mathbb{C}$.
  \end{prop}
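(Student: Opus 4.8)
The plan is to mirror the argument given for the symmetric models in Proposition~\ref{thm:converge}: express both $L(1,q)$ and $R(1,q)$ as explicit series in the iterates and apply the ratio test, reading off the exponential order of the summands from the closed forms in~\eqref{eq:closedform}. Setting $x=1$ in~\eqref{eq:Sx0} and in its companion for $S_{0,y}$, and recalling $L(x,q)=qx^2S_{x,0}(q/(1+q^2))$ and $R(y,q)=qy^2S_{0,y}(q/(1+q^2))$, one has
\[
L(1,q)=\frac{q^2}{1+q^2}\sum_{n\geq0}\chi_n(1)\,\Delta_{L,n}(1),\qquad
R(1,q)=\frac{q^2}{1+q^2}\sum_{n\geq0}\bigl(X_+\circ\Upsilon_n\bigr)(1)\,\Delta_{R,n}(1).
\]
The hypotheses make every summand finite: the analyticity of $\chi_n$ and of $X_+\circ\Upsilon_n$ forces $\overline{\chi}_n$ and $\overline{X_+\circ\Upsilon}_n$ to be nonzero at $q$, so the reciprocals $\chi_n(1)=1/\overline{\chi}_n$ etc.\ are well defined, and the analyticity of $\Delta_{L,n},\Delta_{R,n}$ does the same for the differences. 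It therefore suffices to prove absolute convergence of the two series.

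First I would extract the exponential order of the iterates from~\eqref{eq:closedform}. The numerator of $\overline{\chi}_n$ is a combination $A\,q^{4n}+B\,q^{2n}+C$ over $q^{2n}(q^2-1)^2$, with $n$-independent coefficients depending only on $q$ and $\overline{Y}_+$, namely $A=(q^3-q)\overline{Y}_+-2q^2+1$, $B=2q^2$, and $C=(q-q^3)\overline{Y}_++q^4-2q^2$; the numerator of $\overline{Y_+\circ\chi}_n$ has the identical shape with leading coefficient $qA$. When $|q|>1$ the $q^{4n}$ term dominates the numerator, giving $\overline{\chi}_n\sim Aq^{2n}/(q^2-1)^2$ and hence $\chi_n(1),(Y_+\circ\chi_n)(1)=O(q^{-2n})$; when $|q|<1$ the constant term $C$ dominates, giving $\overline{\chi}_n\sim C\,q^{-2n}/(q^2-1)^2$ and hence $\chi_n(1),(Y_+\circ\chi_n)(1)=O(q^{2n})$. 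In either regime the dominant part of $\Delta_{L,n}(1)=(Y_+\circ\chi_n)(1)-(Y_+\circ\chi_{n-1})(1)$ is carried by whichever adjacent iterate has larger modulus and has that same order. Multiplying through, the general term $\chi_n(1)\Delta_{L,n}(1)$ has order $q^{-4n}$ for $|q|>1$ and $q^{4n}$ for $|q|<1$, so the ratio of consecutive terms tends to $|q|^{-4}$, respectively $|q|^{4}$, both strictly below $1$ once $|q|\neq1$. The ratio test then gives absolute convergence. The closed forms for $\overline{\Upsilon}_n$ and $\overline{X_+\circ\Upsilon}_n$ have exactly the same structure with $\overline{X}_+$ in place of $\overline{Y}_+$, so the identical computation handles the series defining $R(1,q)$.

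The one point requiring care --- and the step I expect to be the main obstacle --- is that the leading numerator coefficient controlling each asymptotic ($A$ when $|q|>1$, the constant term $C$ when $|q|<1$) is itself an algebraic function of $q$ and may vanish on some exceptional locus with $|q|\neq1$, which would invalidate the naive order estimate. I would dispatch this by passing to the next coefficient: if the leading coefficient vanishes, the order of $\overline{\chi}_n$ drops to that set by $B=2q^2$, which is nonzero for $q\neq0$, and an expansion of the reciprocals then shows $\chi_n(1)\Delta_{L,n}(1)$ still decays geometrically, now at rate $|q|^{-2}$ (resp.\ $|q|^{2}$). Thus no degeneracy in the leading coefficients can break convergence: the successive-term ratio remains bounded away from $1$ for every $q$ with $|q|\neq1$, and both $L(1,q)$ and $R(1,q)$ converge. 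This simultaneously confirms that the telescoped expressions feeding~\eqref{eq:Sx0}, and hence the generating-function formula~\eqref{eq:asymgf-q}, are valid off the unit circle.
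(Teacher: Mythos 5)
Your proposal is correct and takes essentially the same route as the paper: the paper's proof is precisely an application of the ratio test to the closed forms in Equation~\eqref{eq:closedform}, yielding the limiting ratio $1/|q|^4$ when $|q|>1$ and $|q|^4$ when $|q|<1$ for both series. Your additional treatment of the exceptional locus where the leading numerator coefficient vanishes (falling back on the middle coefficient $2q^2$ and the rate $|q|^{\mp2}$) is a careful supplement to the paper's terse ``it can easily be shown,'' but it does not alter the method.
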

  \begin{proof}
      Using our explicit expressions above, it can easily be shown that for both models,
      \[ \lim_{n\rightarrow\infty} \left|\frac{\chi_n(x)\Delta_{L,n}(x)}{\chi_{n-1}(x)\Delta_{L,n-1}(x)} \right| = \lim_{n\rightarrow\infty} \left|\frac{X_+\circ\Upsilon_n(x)\Delta^S_{R,n}(x)}{X_+\circ\Upsilon_{n-1}(x)\Delta_{R,n-1}(x)} \right| =  \begin{cases}  1/|q|^4 & : |q|>1  \\  |q|^4 & : |q|<1 \end{cases}, \]
      which proves the convergence where the functions $\chi_n,\Delta_{L,n},\Delta_{R,n},$ and $X_+\circ\Upsilon_n$ are analytic.
  \end{proof}
\subsection{Asymptotic enumeration of models $\mD$ and $\mE$}  
\label{sec:aasym}
    
Both of the generating functions for the asymmetric models have a dominant singularity at $t=1/|\mS|$, although proving this is more complicated than in the symmetric case.  For model~\mD, the numerator of the generating function has a residue of zero (indeed a square-root singularity appears) and one must do a more careful analysis. For model~\mE, the numerator of the generating function has a non-zero residue as before, but we must consider two series in the proof.  Because of this, we simply get a bound on the growth constant at the dominant singularity -- we do not provide a mechanism for its calculation to arbitrary precision as in the symmetric cases. We make use of combinatorial arguments, so we return to the $t$-plane for the remainder of this section.

\subsubsection{Model \mD}
This case was completely considered by Mishna and Rechnitzer, and we restate their results. 

\greybox{
\begin{theorem}[Mishna and Rechnizter \cite{MiRe09}; Proposition 16]
  If $D_n$ denotes the number of walks with steps from \mD~and staying
  in the positive quarter plane, then $D_n \sim \kappa_D
  \frac{3^n}{\sqrt{n}}(1+o(1))$, where $\kappa_D \in
  \left[0,\sqrt{\frac{3}{\pi}}\right]$.
\end{theorem}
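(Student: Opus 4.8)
The plan is to read the dominant singularity of $S(t)$ off the functional equation~\eqref{eqn:fund} rather than the $q$-plane sum, so I first return to the $t$-plane. Evaluating the kernel definition $K_S(x,y)=xy-txy\sum_{(i,j)\in\mD}x^iy^j$ gives $K(1,1)=1-3t$ and $K(1,0)=K(0,1)=-t$, so~\eqref{eqn:fund} at $x=y=1$ reads
\[ S(t)=\frac{1-t\,S_{1,0}(t)-t\,S_{0,1}(t)}{1-3t}. \]
The growth rate is governed by the apparent pole at $t=1/3$, but whether it is genuine depends on the numerator $N(t)=1-t\,S_{1,0}(t)-t\,S_{0,1}(t)$. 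The structural observation driving everything is that the two boundary series are singular in different places. Projecting each step onto its $x$-increment sends ${\sf NW},{\sf N},{\sf SE}$ bijectively to $-1,0,+1$, so the walks counted by $S_{0,1}$ (returning to the $y$-axis) are dominated by \emph{Motzkin excursions}, whose Banderier--Flajolet generating function~\cite{BaFl02} has a square-root singularity at exactly $t=1/(1+2\sqrt{1\cdot 1})=1/3$; projecting onto the $y$-increment sends ${\sf NW},{\sf N}$ to $+1$ and ${\sf SE}$ to $-1$, so $S_{1,0}$ is dominated by an excursion class with singularity at $1/(2\sqrt 2)>1/3$ and is therefore analytic at $t=1/3$.

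First I would establish that $t=1/3$ is dominant and simultaneously extract the upper bound on $\kappa_D$. Since the step-to-$x$-increment map is a bijection, the quarter-plane walks counted by $D_n$ form a subset of the walks that merely stay in $\{x\ge 0\}$, i.e.\ of the Motzkin prefixes of length $n$; the Banderier--Flajolet asymptotics of this half-plane class give $D_n=O(3^n/\sqrt n)$, which pins the growth rate, forces $D_n=o(3^n)$, and furnishes the upper bound $\sqrt{3/\pi}$ for $\kappa_D$. The estimate $D_n=o(3^n)$ is precisely the vanishing of the residue of $S(t)$ at $t=1/3$, i.e.\ $N(1/3)=0$: the naive simple pole is cancelled. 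As in the symmetric case, one must also confirm that the infinitely many further singularities contributed by the iterates $\chi_n,\Upsilon_n$ all have modulus strictly larger than $1/3$, so that $1/3$ is genuinely dominant.

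With $N(1/3)=0$ secured, the square-root singularity of $S_{0,1}$ at $t=1/3$ is what survives. Writing $S_{0,1}(t)=g(t)+h(t)\sqrt{1-3t}$ near $t=1/3$ with $g,h$ analytic, and $S_{1,0}$ analytic there, the analytic part of $N$ acquires a simple zero at $1/3$ that cancels the pole, leaving
\[ S(t)=\text{(analytic)}-\frac{t\,h(t)}{\sqrt{1-3t}}, \]
a pure $(1-3t)^{-1/2}$ singularity with coefficient $-\tfrac13 h(1/3)$. A single application of the transfer theorem of singularity analysis~\cite{FlSe09} then yields $D_n\sim \kappa_D\,3^n/\sqrt n$ with $\kappa_D=-h(1/3)/(3\sqrt\pi)\ge 0$, the lower bound being immediate from $D_n\ge 0$.

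The main obstacle is the one point I have taken for granted: showing that the coefficient $h(1/3)$ is genuinely nonzero, so that the singularity is honestly of order $(1-3t)^{-1/2}$ and is not weakened or removed by a further cancellation. The subset comparison only bounds $S_{0,1}$ from above by the Motzkin-excursion series and does not by itself exclude analyticity at $1/3$. Establishing the genuine presence of the singularity requires returning to the explicit iterated-kernel expression~\eqref{eq:Sx0} for $S_{0,1}$ and the closed forms~\eqref{eq:closedform}, and verifying that the contribution at the branch point $q=(3-\sqrt 5)/2$ (the preimage of $t=1/3$) does not vanish. This is exactly the delicate bookkeeping flagged above, the place where the careful choice of branches matters, and the step at which the argument follows~\cite{MiRe09} most closely.
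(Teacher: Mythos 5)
You should first note what you are up against: the paper offers no proof of this statement at all --- it is imported verbatim from \cite{MiRe09} (Proposition~16), the text saying only that ``the numerator of the generating function has a residue of zero (indeed a square-root singularity appears) and one must do a more careful analysis.'' Your outline reconstructs exactly that intended route, and your supporting computations all check out: $K(1,1)=1-3t$ and $K(1,0)=K(0,1)=-t$ for $\mD$; the $y$-projection gives $p_1=2,p_0=0,p_{-1}=1$, so $S_{1,0}$ is analytic on $|t|<1/(2\sqrt{2})$; the $x$-projection is the Motzkin step set, so $S_{0,1}$ is coefficient-wise dominated by Motzkin excursions; the zero-drift meander asymptotics of \cite{BaFl02} do give $D_n\le M_n\sim\sqrt{3/\pi}\,3^n/\sqrt{n}$, which is exactly where the endpoint $\sqrt{3/\pi}$ comes from; your deduction that $N(1/3)=0$ (positivity of coefficients plus Abel continuity forces $(1-3t)S(t)\to 0$) is sound; and $q=(3-\sqrt{5})/2$ is indeed the relevant branch point --- one can check that $q^2-3q+1$ divides the discriminant of the kernel solved for $x$ at $y=1$.

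The genuine gap sits exactly at the step carrying the theorem's content. You posit $S_{0,1}(t)=g(t)+h(t)\sqrt{1-3t}$ with $g,h$ analytic near $t=1/3$, but nothing in your argument supplies this: the subset comparison with Motzkin excursions bounds coefficients from above, hence gives finiteness of $S_{0,1}(1/3)$, and says nothing about local analytic structure. Since $S_{0,1}$ is not algebraic --- indeed the paper's main theorem is that these series are not even D-finite, with infinitely many singularities accumulating on a curve --- you cannot invoke a Puiseux expansion for free; a priori the singularity at $1/3$ could carry logarithmic or worse behaviour, in which case the transfer step, and with it the very existence of $\lim_n D_n\sqrt{n}/3^n$, fails. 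Establishing the square-root-type expansion, together with a $\Delta$-domain continuation past $|t|=1/3$ (which requires showing that the singularities of all the iterates $\chi_n$ and $X_+\circ\Upsilon_n$ stay uniformly away from $t=1/3$, and then summing the infinitely many $\sqrt{1-3t}$-contributions arising in expression~\eqref{eq:Sx0} via the closed forms~\eqref{eq:closedform}), is precisely the ``more careful analysis'' of \cite{MiRe09} --- that is, essentially the entire proof, which your proposal defers. Note also that you misplace the residual difficulty: proving $h(1/3)\neq 0$ is \emph{not} required by the statement --- the interval $\left[0,\sqrt{3/\pi}\right]$ includes $0$ precisely because that nonvanishing was left open --- whereas the existence of the square-root singular expansion, which you assume rather than prove, is what the statement does require.
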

}

\subsubsection{Model \mE}
In this case, we separately consider the two generating functions of
walks returning to the axis, and bound their convergence at the point
$t=1/4$.
\begin{lemma}\label{lem:E-bound}
The function $E_{1,0}(t)$ is analytic for $|t| \leq \frac{1}{2\sqrt{3}}$, while the function $E_{0,1}(t)$ is analytic for $|t| \leq \frac{1}{1+2\sqrt{2}}$. 
\end{lemma}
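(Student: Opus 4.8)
The plan is to reuse, for each of the two boundary generating functions, the half-plane comparison already exploited in the proof of Theorem~\ref{thm:dom-sing}: a quarter-plane walk returning to a given axis is in particular a half-plane walk returning to that axis, so coefficient-wise domination by a Banderier--Flajolet generating function will bound the radius of convergence from below. The only genuinely new feature, forced by the loss of $x\leftrightarrow y$ symmetry, is that the two axes now require two \emph{different} half-plane models, one obtained by projecting onto the vertical coordinate and one onto the horizontal coordinate.

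For $E_{1,0}(t)$, I would first observe that it enumerates walks with steps from $\mE$ that stay in $\mathbb{N}^2$ and end on the $x$-axis; these form a subclass of the walks that merely stay in the upper half plane $y\geq 0$ and return to the $x$-axis. The latter class is governed by the half-plane generating function $H(t)$ appearing in the proof of Theorem~\ref{thm:dom-sing}, with $p_r=P_r(1)$ recording the number of steps of $\mE$ with vertical displacement $r$. For $\mE=\{\mathsf{NW},\mathsf{N},\mathsf{NE},\mathsf{SE}\}$ one reads off $p_1=3$, $p_0=0$, $p_{-1}=1$, so this series has dominant singularity at $\tfrac{1}{p_0+2\sqrt{p_1p_{-1}}}=\tfrac{1}{2\sqrt3}$. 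Since $0\leq[t^n]E_{1,0}(t)\leq[t^n]H(t)$, the radius of convergence of $E_{1,0}$ is at least $\tfrac{1}{2\sqrt3}$; and because the dominating singularity is an algebraic square root, its coefficients are summable at the radius, so $H$ --- and hence $E_{1,0}$ by domination --- converges on the whole closed disk $|t|\leq\tfrac{1}{2\sqrt3}$, giving analyticity there.

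The argument for $E_{0,1}(t)$ is identical after rotating one's viewpoint: these walks stay in $\mathbb{N}^2$ and end on the $y$-axis, hence form a subclass of the walks confined to the right half plane $x\geq0$ returning to the $y$-axis. Projecting onto the horizontal coordinate (the vertical motion being unconstrained in this half plane) turns these into Banderier--Flajolet excursions whose step with horizontal displacement $c$ carries multiplicity $q_c=Q_c(1)$, the number of steps of $\mE$ with that horizontal displacement. Here $q_1=2$, $q_0=1$, $q_{-1}=1$, so the governing half-plane series has its dominant singularity at $\tfrac{1}{q_0+2\sqrt{q_1q_{-1}}}=\tfrac{1}{1+2\sqrt2}$, and the same coefficient-domination-plus-summability argument yields analyticity of $E_{0,1}$ on $|t|\leq\tfrac{1}{1+2\sqrt2}$.

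I do not expect a serious obstacle here; the work is in correctly extracting the two displacement profiles and in checking that the quarter-plane classes really do inject into the respective half-plane classes. The one point meriting care is the passage from an open disk of convergence to the closed disk asserted in the statement, which relies on the dominating Banderier--Flajolet functions having square-root rather than pole singularities, so that their coefficients remain summable at the radius. This closed-disk analyticity is exactly what is needed downstream to certify that the pole at $t=\tfrac14$ coming from $1-|\mE|t$ is the dominant singularity of $E(t)$, since both $\tfrac{1}{2\sqrt3}$ and $\tfrac{1}{1+2\sqrt2}$ exceed $\tfrac14$.
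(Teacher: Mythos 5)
Your proposal is, in substance, exactly the paper's proof: the paper likewise bounds $[t^n]E_{1,0}(t)$ and $[t^n]E_{0,1}(t)$ by the Banderier--Flajolet counts of half-plane walks returning to the $x$-axis and the $y$-axis respectively, reads off the growth constants $2\sqrt{3}$ and $1+2\sqrt{2}$ from the two displacement profiles, and concludes from the coefficient bounds. Your profiles $(3,0,1)$ vertically and $(2,1,1)$ horizontally are correct (though note your labels are swapped relative to the paper's conventions, where $P_i$ collects steps of \emph{horizontal} displacement $i$ and $Q_j$ of vertical displacement $j$; the paper's own proof of Theorem~\ref{thm:dom-sing} is loose about this too, harmlessly, because those models are symmetric).

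The one step you add beyond the paper --- the passage from the open disk to the closed disk via summability --- is invalid. Absolute convergence of a power series on the closed disk of convergence gives continuity there, not analyticity at boundary points: your own dominating function $H(t)$ is the counterexample, since its coefficients are summable at its radius $\tfrac{1}{2\sqrt{3}}$ and yet it has a square-root branch point exactly there. Indeed, no argument based purely on coefficient domination can certify analyticity on the boundary circle, because domination is compatible with $E_{1,0}(t)$ having radius of convergence exactly $\tfrac{1}{2\sqrt{3}}$, in which case Pringsheim's theorem (the coefficients being nonnegative) would force a singularity at that very point. This imprecision originates in the lemma's statement; the paper's proof simply does not address it, asserting only the exponential growth bounds and that ``the result follows.'' What is actually used downstream, in Lemma~\ref{thm:E-sing} and the corollary, is analyticity of $E_{1,0}$ and $E_{0,1}$ on a disk of radius strictly larger than $\tfrac14$, and that much your open-disk argument (like the paper's) establishes correctly.
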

\begin{proof} We use the same approach as in
  Theorem~\ref{thm:dom-sing}, bounding the exponential growth factor
  by considering walks in the half plane that end at the $x$ and $y$
  axis respectively. This proves the coefficient of $t^n$ in $E_{1,0}(t)$
  has growth bounded above by $O\left((2\sqrt{3})^n\right)$ and
  the coefficient of $t^n$ in $E_{0,1}(t)$ has growth bounded above by
  $O\left((1+2\sqrt{2})^n\right)$.  The exponential growth in the
  asymptotic expression corresponds to the inverse of the dominant
  singularity, and the result follows.
\end{proof}

\begin{lemma} \label{thm:E-sing} The function $E(t)$ has a simple
  singularity at $t=1/4$ where it has
  a residue of value $\kappa_E \in
  \left[\frac{122}{525},\frac{7}{10}\right]$.
\end{lemma}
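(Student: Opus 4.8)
The plan is to read the pole off the functional equation and then reduce the residue to a bound on two explicit numbers. Setting $x=y=1$ in the model-$\mE$ functional equation~\eqref{eqn:fund} (with the unscaled kernel, for which $K(1,1)=1-4t$ and $K(1,0)=K(0,1)=-t$, since the only step surviving at $y=0$, resp.\ $x=0$, is $\sf SE$, resp.\ $\sf NW$) gives $(1-4t)E(t)=1-t\,E_{1,0}(t)-t\,E_{0,1}(t)$, so that
\[
E(t)=\frac{1-t\bigl(E_{1,0}(t)+E_{0,1}(t)\bigr)}{1-4t},
\qquad
g(t):=(1-4t)E(t)=1-t\bigl(E_{1,0}(t)+E_{0,1}(t)\bigr).
\]
By Lemma~\ref{lem:E-bound}, $E_{1,0}$ and $E_{0,1}$ are analytic on disks of radius $\tfrac{1}{2\sqrt3}>\tfrac14$ and $\tfrac{1}{1+2\sqrt2}>\tfrac14$ respectively, so $g$ is analytic in a neighbourhood of $t=\tfrac14$. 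Hence $E$ is meromorphic there with at worst a simple pole at $\tfrac14$, and the pole is genuine (not removable) exactly when $\kappa_E:=g(\tfrac14)=1-\tfrac14\bigl(E_{1,0}(\tfrac14)+E_{0,1}(\tfrac14)\bigr)\neq0$. This identity for $\kappa_E$ is the whole object to be estimated.

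Writing $U:=E_{1,0}(\tfrac14)+E_{0,1}(\tfrac14)$, the claim $\kappa_E\in[\tfrac{122}{525},\tfrac{7}{10}]$ is equivalent to $U\in[\tfrac65,\tfrac{1612}{525}]$. Both series have nonnegative coefficients and constant term $1$ (the empty walk), so the lower bound $U\ge 2\ge\tfrac65$—which already gives $\kappa_E\le\tfrac7{10}$—is immediate. The substance lies entirely in the upper bound $U\le\tfrac{1612}{525}$, equivalently $\kappa_E\ge\tfrac{122}{525}$.

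For the upper bound I would compare with the half-plane models appearing in the proof of Lemma~\ref{lem:E-bound}. Every quarter-plane walk returning to the $x$-axis is in particular an upper-half-plane excursion, and every walk returning to the $y$-axis is a right-half-plane excursion, so the Banderier--Flajolet excursion generating functions $H_x$ (projection $p_1=3,p_0=0,p_{-1}=1$) and $H_y$ (projection $p_1=2,p_0=1,p_{-1}=1$) dominate $E_{1,0}$ and $E_{0,1}$ coefficientwise; they evaluate to $H_x(\tfrac14)=\tfrac43$ and $H_y(\tfrac14)=2$. The crude bound $U\le\tfrac43+2=\tfrac{10}{3}$ is, however, just too weak, since $\tfrac{10}{3}>\tfrac{1612}{525}$. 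The remedy is to retain the \emph{exact} quarter-plane counts $a_n=[t^n]E_{1,0}$, $b_n=[t^n]E_{0,1}$ up to some order $N$ and only bound the tail by the half-plane counts $h_n=[t^n]H_x$, $k_n=[t^n]H_y$:
\[
U\;\le\;\sum_{n=0}^{N}(a_n+b_n)4^{-n}+\Bigl(H_x(\tfrac14)+H_y(\tfrac14)\Bigr)-\sum_{n=0}^{N}(h_n+k_n)4^{-n}.
\]
Since $a_n\le h_n$ and $b_n\le k_n$, with strict inequality already at $n=2$, the right-hand side is non-increasing in $N$ and converges to $U$; hence a finite computation of initial boundary counts drives it below $\tfrac{1612}{525}$ and yields $\kappa_E\ge\tfrac{122}{525}$.

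The main obstacle is precisely this quantitative last step: the half-plane comparison alone does not separate $\kappa_E$ from $0$ tightly enough, so one must compute enough genuine quarter-plane boundary counts to close the gap to $\tfrac{1612}{525}$, while checking that the tail comparison $a_n\le h_n$, $b_n\le k_n$ is legitimate. Once $U\le\tfrac{1612}{525}$ is secured, the bound $\kappa_E\ge\tfrac{122}{525}>0$ simultaneously confirms that $t=\tfrac14$ is a true simple pole and pins its residue in the stated interval.
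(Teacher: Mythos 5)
Your proposal is correct in substance but takes a genuinely different route from the paper. The paper's proof works inside the iterated kernel machinery: it substitutes $t=1/4$ (i.e.\ the corresponding $q$) into the explicit closed forms for $\chi_n$, $Y_+\circ\chi_n$, $\Upsilon_n$, and $X_+\circ\Upsilon_n$ from Equation~\eqref{eq:closedform}, establishes convergence of the two boundary series by the ratio test, and pins down the numerator $1-\tfrac14E_{1,0}(1/4)-\tfrac14E_{0,1}(1/4)$ using a few initial terms plus two telescoping tail series; this buys arbitrary-precision evaluation of $\kappa_E$ ($\approx 0.2636$) from expressions already built for the non-D-finiteness argument. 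You instead stay purely combinatorial: the same specialization $(1-4t)E(t)=1-tE_{1,0}(t)-tE_{0,1}(t)$ and analyticity of the numerator at $1/4$ via Lemma~\ref{lem:E-bound}, the trivial bound $U=E_{1,0}(1/4)+E_{0,1}(1/4)\ge 2$ for the $\tfrac{7}{10}$ side (your implied $\kappa_E\le\tfrac12$ is in fact sharper than the stated endpoint), and for the $\tfrac{122}{525}$ side a coefficientwise domination of the boundary series by the Banderier--Flajolet half-plane excursion functions, keeping exact quarter-plane counts up to order $N$ and bounding only the tail. Your evaluations $H_x(1/4)=\tfrac43$ and $H_y(1/4)=2$ are correct --- they come from the excursion equation $H=1+tp_0H+t^2p_1p_{-1}H^2$, so implicitly you have also repaired a typo in the formula displayed in the paper's proof of Theorem~\ref{thm:dom-sing}, whose denominator should read $2t^2p_1p_{-1}$ --- and your certified upper bound is legitimately non-increasing in $N$ and converges to $U$, since $a_n\le h_n$ and $b_n\le k_n$ hold by the subset relation between quarter-plane and half-plane walks ending on the respective axis. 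Two caveats: first, the termination of your scheme presupposes the strict inequality $U<\tfrac{1612}{525}\approx 3.07$, which is true ($U\approx 2.95$) but is exactly the content being certified, so the proof is only complete once the finite computation is actually exhibited --- though the paper's own proof likewise asserts its computed interval without printing the numbers; second, the convergence rate of your tail bound is governed by $(1+2\sqrt2)/4\approx 0.957$, so one should check that a modest $N$ (a rough estimate suggests a few dozen terms, obtainable by dynamic programming) suffices, which it does. What your route buys is independence from the closed forms~\eqref{eq:closedform} and a transparent one-sided error certificate; what it gives up is the mechanism for computing $\kappa_E$ to arbitrary precision that the paper's telescoping-series approach provides.
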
 

\begin{proof}
  To compute the residue, it suffices to
  substitute the required value into the explicit expressions for
  $\chi_n,Y_+\circ\chi_n, \Upsilon_n,$ and $X_+\circ\Upsilon_n$ for all $n$.  We
  treat the generating functions returning to the axes separately,
  with convergence established by the ratio test.  Furthermore, we can
  tightly bound the series in the numerator using the values of some
  initial terms and two telescoping series, and compute that, as desired:
\[ 1-\frac14E_{1,0}(1/4)-\frac14E_{0,1}(1/4) \in \left[\frac{122}{525},\frac{7}{10}\right] \subset [0.232,0.7].
\]
\end{proof}

    One may note that the location of the singularity is predicted, but not proven,
  by the results of~\cite{FaRa12}.  Lemmas~\ref{lem:E-bound} and~\ref{thm:E-sing} combine to give us the leading term asymptotics of Model~\mE.

\greybox{
    \begin{cor} The number, $E_n$, of walks taking steps in \mE~and staying in the positive quarter plane grows asymptotically as
    \[ E_n = \kappa_E\cdot 4^n + O\left( (1+2\sqrt{2})^n \right),\]
    where $\kappa_E \in \left[\frac{122}{525},\frac{7}{10}\right] \subset [0.232,0.7]$.
    \end{cor}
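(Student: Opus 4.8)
The plan is to obtain the asymptotics of $E_n$ by a routine meromorphic singularity analysis of an explicit rational-in-$E_{1,0},E_{0,1}$ form of $E(t)$, feeding in the two preceding lemmas as the only nontrivial inputs. First I would record the full generating function directly in the $t$-plane. Evaluating the functional equation~\eqref{eqn:fund} at $x=y=1$ and using that for model~$\mE$ one has $K(1,1)=1-4t$ and $K(1,0)=K(0,1)=-t$ gives
\[ E(t)=\frac{N(t)}{1-4t},\qquad N(t):=1-t\,E_{1,0}(t)-t\,E_{0,1}(t). \]
This is the $t$-plane analogue of~\eqref{eq:asymgf-q}, and it isolates the only candidate for a dominant singularity, namely the simple pole contributed by the factor $1-4t$ at $t=1/4=1/|\mE|$.

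Next I would pin down the domain of analyticity of the numerator. By Lemma~\ref{lem:E-bound}, $E_{1,0}$ is analytic on $|t|\le\tfrac{1}{2\sqrt3}$ and $E_{0,1}$ on $|t|\le\tfrac{1}{1+2\sqrt2}$. Since $1+2\sqrt2>2\sqrt3$, the binding radius is $\tfrac{1}{1+2\sqrt2}$, so $N(t)$ is analytic for $|t|<\tfrac{1}{1+2\sqrt2}$ and its Taylor coefficients satisfy $[t^k]N(t)=O\big((1+2\sqrt2)^k\big)$, the rate being inherited (via the half-plane comparison used in Theorem~\ref{thm:dom-sing}) from the slower-decaying $E_{0,1}$ term. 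Because $\tfrac14<\tfrac{1}{1+2\sqrt2}$, the point $t=1/4$ lies strictly inside the disk of analyticity of $N$, which confirms both that the pole at $1/4$ is dominant and that it is the unique singularity of $E(t)$ of modulus $\le 1/4$.

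With analyticity secured, Lemma~\ref{thm:E-sing} supplies the constant: $N(1/4)=1-\tfrac14E_{1,0}(1/4)-\tfrac14E_{0,1}(1/4)=\kappa_E\in[\tfrac{122}{525},\tfrac{7}{10}]$. The load-bearing consequence is $\kappa_E>0$, so $N$ does not vanish at $t=1/4$ and the pole is genuine rather than removable. Coefficient extraction is then a short convolution: writing $\tfrac{1}{1-4t}=\sum_m 4^m t^m$,
\[ E_n=[t^n]E(t)=4^n\sum_{k=0}^{n}[t^k]N(t)\,4^{-k}=\kappa_E\,4^n-4^n\sum_{k>n}[t^k]N(t)\,4^{-k}. \]
Since $[t^k]N(t)=O\big((1+2\sqrt2)^k\big)$ and $\tfrac{1+2\sqrt2}{4}<1$, the tail is $O\big((\tfrac{1+2\sqrt2}{4})^n\big)$; multiplying by $4^n$ gives the error $O\big((1+2\sqrt2)^n\big)$, which is exactly the stated estimate with $\kappa_E\in[\tfrac{122}{525},\tfrac{7}{10}]$.

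Most of the conceptual work already sits inside the two lemmas, so I expect no serious obstacle to remain. The only points demanding care are the two inequalities $2\sqrt3<1+2\sqrt2<4$, which simultaneously guarantee that the subdominant exponential rate is $1+2\sqrt2$ rather than $2\sqrt3$ and that this rate lies strictly below the dominant rate $4$, together with the strict positivity $\kappa_E\ge\tfrac{122}{525}>0$ that rules out a removable singularity. If one only knew $\kappa_E\in[0,\,\cdot\,]$, the $4^n$ term could collapse and the analysis would be forced down to the next singularity; hence the \emph{lower} bound in Lemma~\ref{thm:E-sing} is what genuinely makes the argument work.
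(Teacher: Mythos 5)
Your proposal is correct and takes essentially the same route as the paper, which derives the corollary exactly by combining Lemma~\ref{lem:E-bound} (placing all singularities of the numerator $1-tE_{1,0}(t)-tE_{0,1}(t)$ at modulus at least $\frac{1}{1+2\sqrt{2}}>\frac14$) with Lemma~\ref{thm:E-sing} (giving $\kappa_E\geq\frac{122}{525}>0$, so the pole at $t=\frac14$ is not removable). Your explicit convolution estimate for the tail merely spells out the standard singularity-analysis step that the paper leaves implicit, and your closing observation about the necessity of the strict lower bound on $\kappa_E$ matches the paper's reason for proving Lemma~\ref{thm:E-sing} in that form.
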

  } 

  Computational evidence given by calculating the series for
  $E_{1,0}(1/4)$ and $E_{0,1}(1/4)$ to a large number of terms implies
  that the value of the growth constant is approximately $0.2636$,
  which is consistent with the growth of computationally generated
  values of $E_n$ for large $n$.

  \subsection{The generating functions $D(t)$ and $E(t)$ are not D-finite}
  \label{sec:anonD}
  The additional sums that arise in our expressions for~$D(t)$
  and~$E(t)$ do not change our fundamental argument. In the symmetric
  examples we found a set of singularities associated to each $Y_n$
  and proved that they do not cancel. Here, although the same
  structure is undoubtedly present, we prove the existence and non 
  cancellation of singularities along a
  single line. The set of singularities is infinite, and thus the
  generating functions are not D-finite. Indeed, this argument is
  simpler and we would have emulated it in the symmetric cases had we
  found a ray or line which contained an infinite number of
  singularities.
  
  More specifically, for both asymmetric models, we demonstrate an
  infinite source of singularities in $L(1,q)$ and prove that $R_1q)$
  converges at those points.  As in the previous cases, we find
  polynomials $\omega^1_n, \omega^2_n, \omega^3_n,$ and $\omega^4_n$
  that the roots of $\overline{\chi}_n, \overline{Y_+\circ\chi}_n,
  \overline{\Upsilon}_n,$ and $\overline{X_+\circ\Upsilon}_n $ must satisfy --
  note that the orders of the roots of the polynomials match the
  orders of the roots of our functions.  These polynomials are
  summarized in Tables~\ref{tab:D-polys} and~\ref{tab:E-polys}. 
  To be more precise, $\omega^1_n(q)$ contains the poles of
  $\chi_n(q)$, $\omega^2_n(q)$ contains the poles of $Y_+ \circ \chi_n(q)$,
  etc.

\begin{table}\small
  \[
 \begin{array}{l|l}
    \chi_n& \omega^1_n = \left(q^{4n+2}+q^{2n+4}-4q^{2n+2}+q^{2n}+q^2\right)^2\\[2mm]
     Y_+\circ\chi_n& \omega^2_n = \left(q^{4n+2}+q^{2n+4}-4q^{2n+2}+q^{2n}+q^2\right) \left(q^{4n+4}+q^{2n+4}-4q^{2n+2}+q^{2n}+1\right)\\[2mm]
     \Upsilon_n& \omega^3_n=
     \left(q^{4n+3}+q^{2n+4}-q^{2n+3}-2q^{2n+2}-q^{2n+1}+q^{2n}+q\right)\\
&\hfill
 \cdot\left(q^{4n+1}+q^{2n+4}-q^{2n+3}-2q^{2n+2}-q^{2n+1}+q^{2n}+q^3\right)\\[2mm]
      X_+\circ\Upsilon_n& \omega^3_n = \left(  q^{4n+3}+q^{2n+4}-q^{2n+3}-2q^{2n+2}-q^{2n+1}+q^{2n}+q \right)^2\\
    \end{array}
\]
\caption{The minimal polynomials of the singularities for functions
  defined by Eqn.~\eqref{eq:closedform} (Model \mD).}
\label{tab:D-polys}
\end{table}

\begin{table}[ht]\small
\[
  \begin{array}{l|ll}
  \chi_n&    \omega^1_n = q^2\left(q^4-q^2+1\right)\left(q^{8n}+1\right)+2q^2(q^4-4q^2+1)\left(q^{6n}+q^{2n}\right)+(q^8-10q^6+24q^4-10q^2+1)q^{4n}\\[2mm]
    Y_+\circ\chi_n&  \omega^2_n = (q^4-q^2+1)\left(q^{8n+4}+1\right)+(q^6-3q^4-3q^2+1)\left(q^{6n+2}+q^{2n}\right)+(q^8-9q^6+22q^4-9q^2+1)q^{4n}\\[2mm]
      \Upsilon_n&\omega^3_n = q^2(q^4-q^2+1)\left(q^{8n}+1\right)+q(q^6-q^5-q^4-2q^3-q^2-q+1)\left(q^{6n}+q^{2n}\right)\\
                               &\hfill+(q^8-2q^7-4q^6+2q^5+12q^4+2q^3-4q^2-2q+1)q^{4n} \\[2mm]
      X_+\circ\Upsilon_n&\omega^4_n = (q^4-q^2+1)\left(q^{8n+4}+1\right)+2q(q^4-q^3-2q^2-q+1)\left(q^{6n+2}+q^{2n}\right)\\
                          &\hfill+(q^8-2q^7-5q^6+2q^5+14q^4+2q^3-5q^2-2q+1)q^{4n}\\
  \end{array}
\]
\caption{The minimal polynomials of the singularities for functions
  defined by Eqn.~\eqref{eq:closedform} (Model \mE).}
\label{tab:E-polys}
\end{table}
We next prove that for even $n$, each $\chi_n$ has a distinct
singularity on the imaginary axis, and we prove that it is indeed a
singularity of the generating function. We prove this separately for
each model, but the arguments (indeed the computations!) are almost
identical in both cases. In order to manipulate the unwieldy formulas
which arise we used the Groebner package in Maple version 16 to
calculate the relevant Gr\"obner bases.

Both cases also invoke $\chi_{-n}(q)$ to
prove that certain solutions of the polynomial are actually solutions
of the model. These are defined by rolling the recurrence in reverse,
as before. 
\subsection{Model \mD}
First, note that in the case of model~\mD~the poles of $\chi_n(q)$ are contained in the roots of 
$\omega^1_n = \left(q^{4n+2}+q^{2n+4}-4q^{2n+2}+q^{2n}+q^2\right)^2$, by Table~\ref{tab:D-polys}. 
\begin{lemma} The function $\overline{\chi}_n(q)$ has a root on the imaginary axis between $i$ and $2i$, when $n$ is even.
\end{lemma}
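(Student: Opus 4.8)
The plan is to produce the root by an intermediate value argument on the imaginary axis and then certify that the point it produces is a genuine pole of $\chi_n$. First I would recall from Table~\ref{tab:D-polys} that the poles of $\chi_n$ lie among the roots of $\omega^1_n = (q^{4n+2}+q^{2n+4}-4q^{2n+2}+q^{2n}+q^2)^2$. Factoring $q^2$ out of the inner polynomial gives $q^2(q^{4n}+q^{2n+2}-4q^{2n}+q^{2n-2}+1)=q^2\alpha_n(q)$, where $\alpha_n$ is exactly the model-$\mA$ polynomial; since $\omega^1_n$ is a perfect square, its nonzero roots are precisely the roots of $\alpha_n$, so it suffices to locate a genuine pole of $\chi_n$ among the zeros of $\alpha_n$ on the segment.

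Next I would substitute $q=iy$ with $y$ real. Every exponent appearing in $\alpha_n$ is even, and because $n$ is even the even powers of $i$ collapse to definite signs, yielding the real polynomial $\alpha_n(iy)=y^{4n}-y^{2n+2}-4y^{2n}-y^{2n-2}+1$. Evaluating at the endpoints gives $\alpha_n(i)=1-1-4-1+1=-4<0$, while $\alpha_n(2i)=16^n-8\cdot 4^n-\tfrac14 4^n+1$, which is dominated by the $y^{4n}$ term and hence positive for every even $n\geq 2$. The intermediate value theorem then produces $y_0\in(1,2)$ with $\alpha_n(iy_0)=0$, so $q_0=iy_0$ lies strictly between $i$ and $2i$ and is a root of $\omega^1_n$. (This is also where the hypothesis that $n$ is even is used: for odd $n$ one instead finds $\alpha_n(i)=0$, so the clean sign change is lost.)

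The principal obstacle is the last step: certifying that $q_0$ is a true zero of $\overline{\chi}_n$ rather than an extraneous root introduced when the square root was cleared, i.e.\ a pole of the reverse iterate $\chi_{-n}$. Here I would imitate Lemma~\ref{lemma:rootABC}, exploiting that model $\mD$ shares model $\mA$'s discriminant $q^4-6q^2+1$. Along the imaginary axis this quantity equals $y^4+6y^2+1>0$, so the principal square root stays real and positive throughout the segment, no branch cut is crossed, and the branch-resolved value of $\overline{Y}_+$ makes $\overline{\chi}_n(iy)$ a genuinely real function of $y$. Substituting this value into the numerator of $\overline{\chi}_n$ from Eqn~\eqref{eq:closedform}, the condition $\overline{\chi}_n(q_0)=0$ becomes a single sign condition on the square-root term, and I would verify that the sign occurring on the segment is the one for which the numerator of $\overline{\chi}_n$—not that of $\overline{\chi}_{-n}$—vanishes at $q_0$; invoking $\chi_{-n}$, defined by running the recurrence backwards, to pin down this branch is exactly the mechanism flagged before the lemma statement. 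I expect the intermediate value computation to be routine and this argument-of-the-square-root bookkeeping to be the hard part, since it is precisely what rules out $q_0$ belonging only to $\chi_{-n}$.
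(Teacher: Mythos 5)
Your proposal follows essentially the same route as the paper: an intermediate value argument on the imaginary axis applied to the real polynomial obtained by restricting $\omega^1_n$ there, followed by a positivity argument on the numerator of $\overline{\chi}_n(ri)$ (which contains $\sqrt{1+6r^2+r^4}$) to rule out the extraneous branch. Your factorization of the inner polynomial as $q^2\alpha_n(q)$, with $\alpha_n$ the model-$\mA$ polynomial, is exactly what the paper evaluates: it finds the inner polynomial positive at $q=i$ and negative at $q=2i$, the sign flip relative to your $\alpha_n(i)=-4$ coming from the factor $q^2=-r^2$. The only genuine difference is the branch certification. The paper notes that $\omega^1_n(ri)/r^2$ is palindromic in $r$, so roots pair up as $r_c,1/r_c$; it then uses $\chi_n(i/r)=\chi_{-n}(ri)$ and shows the numerator $4R^4r^2+(r^4+1)(1+R^4)+(r^2+1)(1-R^4)\sqrt{1+6r^2+r^4}$ is strictly positive for $0<r<1$, forcing the root in $(1,2)$ to belong to $\overline{\chi}_n$. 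You instead check the sign directly at $r\in(1,2)$, where $1-R^4<0$ makes the $\overline{\chi}_n$ branch the only one whose numerator can vanish while the $\overline{\chi}_{-n}$ numerator (opposite sign on the radical) stays strictly positive; this is the same computation reflected through $r\mapsto 1/r$, and it does close, with the marginally nicer feature of not needing the palindromic pairing. One small slip in your aside about odd $n$: there one gets $\alpha_n(iy)=y^{4n}+y^{2n+2}+4y^{2n}+y^{2n-2}+1>0$ for all real $y$, so $\alpha_n(i)=8$, not $0$; your conclusion that the sign change (indeed any imaginary root) is lost for odd $n$ still stands.
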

  \begin{proof}
    Suppose $r\in\mathbb{R}$ and substitute~$q=ri$ into $\omega^1_n(q)$:
    \[ \omega^1_n(ri) = R^2-r^2+4r^2R^2-r^2R^4+r^4R^2,\]
    where~$R=r^n$.  We remark that this is a real valued function of $r$, and if $r=1$ then $R=1$ and $\omega^1_n(i)=4$.  However, $\omega^1_n(2i)=33R^2-4-4R^4$ which is negative for $R\geq 2$. Thus, the Intermediate Value Theorem implies $\omega^1_n(ri)$ has a zero on the imaginary axis between $i$ and $2i$. Denote this value by $r_c$.  The expression for $\omega^1_n(ri)$ is palindromic, so $r=1/r_c$ is also a root of $\omega^1_n(ri)$.

 For $r>0$ it can easily be shown that $\chi_n\left(i/r\right) = \chi_{-n}(ri)$, and thus one of $ir_c$ and $i/r_c$ must be a root of $\overline{\chi}_n$.    As the numerator of $\overline{\chi}_n(ri)$ is 
    \[ 4R^4r^2+(r^4+1)(1+R^4)+(r^2+1)(1-R^4)\sqrt{1+6r^2+r^4}+4r^2(1-R^2) ,\]
    which is strictly positive for $0<r<1$, the root of $\omega^1_n$ between $i$ and $2i$ is in fact a root of $\overline{\chi}_n$.
  \end{proof}

Furthermore, the poles of $\chi_n$ and the poles of $\chi_k$ are distinct when $n\neq k$.  To see this, we note that we can re-write the expression above for $\omega^1_n(ri)$ as $-R^4r^2+(1+4r^2+r^4)R^2-r^2$.  Treating $r$ and $R$ as independent variables, for a fixed positive value of $r$ this expression is a polynomial in $R$ whose coefficients have signs negative, positive, negative, respectively.  Descartes's Rule of Signs then implies that there are at most 2 roots to this equation, one of which we know to be inside the unit circle and one of which lies outside of it.  The result then follows from the observation that different values of $n$ each yield a different value of $(ri)^n=R^n$ for $r>1$.

Next we show that these singularities are present in the term $\chi_n \Delta_{L, n}(q)$.  To do this, we let $o\geq1$ denote the multiplicity of the root $q=ri$ of $q^{4n+2}+q^{2n+4}-4q^{2n+2}+q^{2n}+q^2$ we have found above, so that  $\chi_n(q)$ has a pole of order $2o$ at $q=ri$.

\begin{lemma}  \label{lem:Ddistinctsing} 
  For an infinite number of $n$ there exists a distinct
  purely imaginary number $ri$ with $1<r<2$, such that
  $\chi_n\left(Y_+\circ\chi_n-Y_+\circ\chi_{n-1}\right)$ has a pole
  (of order $3o\geq3$) at $q=ri$.  Furthermore, for $k \neq n$ the
  summand $\chi_k\left(Y_+\circ\chi_k-Y_+\circ\chi_{k-1}\right)$ of
  $L(1,q)$ is analytic at $ri$.\end{lemma}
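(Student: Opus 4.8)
Since the groundwork is already laid — the preceding lemma exhibits $q=ri$ with $1<r<2$ as a genuine pole of $\chi_n$ of order $2o$, where $o$ is the multiplicity of $ri$ as a root of $f_n(q):=q^{4n+2}+q^{2n+4}-4q^{2n+2}+q^{2n}+q^2$, and the Descartes-rule argument makes these poles distinct across $n$ — the plan is purely to track orders of vanishing at $ri$ summand by summand, and to decide which candidate roots from Table~\ref{tab:D-polys} are genuine. Throughout I write $g_n(q):=q^{4n+4}+q^{2n+4}-4q^{2n+2}+q^{2n}+1$, so that $\omega^1_n=f_n^2$ and $\omega^2_n=f_ng_n$.

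First I would fix the order of $Y_+\circ\chi_n$ at $ri$. Reading off $Y_+(x;q)$ for model $\mD$, one has $Y_+(x;q)\sim c(q)\sqrt{x}$ as $x\to\infty$ with $c(ri)\neq0$; since $\chi_n$ has a pole of order $2o$ at $ri$, composing gives a genuine pole of $Y_+\circ\chi_n$ of order exactly $o$ — and here the perfect-square shape $\omega^1_n=f_n^2$ is essential, as it forces the order $2o$ of the pole of $\chi_n$ to be even, so that the square root yields an honest pole rather than a branch point. Next, because $ri$ is not a pole of $\chi_{n-1}$ (distinctness), $\chi_{n-1}(ri)$ is finite, and I would check that this value is neither $-1$ (the pole of $Y_+$) nor a branch point of $Y_+$, so that $Y_+\circ\chi_{n-1}$ is regular at $ri$. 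Granting this, $\Delta_{L,n}=Y_+\circ\chi_n-Y_+\circ\chi_{n-1}$ has a pole of order exactly $o$, and multiplying by the order-$2o$ pole of $\chi_n$ yields the asserted order $3o\ge3$ for the distinguished summand $\chi_n\Delta_{L,n}$.

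For the remaining summands I would use the same composition viewpoint: $Y_+\circ\chi_k$ is regular at $ri$ as soon as $\chi_k$ is regular there and $\chi_k(ri)$ is neither $-1$ nor a branch point of $Y_+$. By distinctness this handles every $k$ with $|k-n|\ge2$ outright, since then $\chi_k$ and $\chi_{k-1}$ are both regular at $ri$. The delicate indices are $k=n\pm1$, where the minimal polynomials do vanish at $ri$ through their $g$-factors: the palindromic structure of $f_n$ forces $g_{n-1}(ri)=(ri)^{4n+2}f_n(-i/r)=0$, because $-i/r=1/(ri)$ is the reciprocal root of $f_n$. To decide that such a root is extraneous for the forward function $Y_+\circ\chi_{n-1}$ rather than a true singularity, I would mimic the preceding lemma, using the reversed iterate via $\chi_n(i/r)=\chi_{-n}(ri)$ together with a sign check on the explicit numerators of $\overline{\chi}_n$ and $\overline{Y_+\circ\chi}_n$ from Eqn.~\eqref{eq:closedform} at $q=ri$; this is precisely where the Gr\"obner-basis computations enter.

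The main obstacle is exactly this separation of genuine from extraneous roots: the reciprocal pairing guarantees that $ri$ reappears among the roots of the neighbouring $\omega^2$-polynomials, so regularity of $Y_+\circ\chi_{k\pm1}$ cannot be read off the polynomial data and must be extracted from the reversed-iterate identity and the closed forms. One genuinely surviving subtlety is the index $k=n+1$: there $Y_+\circ\chi_n$ reappears (with the opposite sign), so that summand does in fact carry a pole — but only of order $o$, which is dominated by the order-$3o$ pole of the distinguished summand and therefore cannot cancel it. Hence $ri$ survives as a pole of $L(1,q)$ of order $3o$, which is what the non-D-finiteness argument needs. Model $\mE$ is treated identically, replacing $f_n,g_n$ and $Y_+,X_+$ by their counterparts in Table~\ref{tab:E-polys}.
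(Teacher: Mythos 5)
Your proposal has a genuine internal inconsistency at its central step, and it follows from the paper's own recurrences. By \eqref{eq:initialsystem} we have the identity $\overline{Y_+\circ\chi}_{n} = (q+1/q)\overline{\chi}_{n} - \overline{Y_+\circ\chi}_{n-1}$, and at $q=ri$ the function $\overline{\chi}_n$ vanishes (to order $2o$). Hence $\overline{Y_+\circ\chi}_n$ and $\overline{Y_+\circ\chi}_{n-1}$ vanish at $ri$ to the \emph{same} order (or both to order $\geq 2o$): either both $Y_+\circ\chi_n$ and $Y_+\circ\chi_{n-1}$ have poles at $ri$, or neither does. Your proof asserts simultaneously that $Y_+\circ\chi_n$ has a pole of order exactly $o$ (via the $Y_+(x)\sim c\sqrt{x}$ composition heuristic) and that $Y_+\circ\chi_{n-1}$ is regular at $ri$ (via finiteness of $\chi_{n-1}(ri)$); this configuration is impossible. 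The deeper reason the heuristic fails is that the objects entering $L(1,q)$ near $ri$ are the global closed forms \eqref{eq:closedform} (rational in the fixed algebraic function $\overline{Y}_+$), not principal-branch compositions applied independently at each index, so you cannot read off regularity of $Y_+\circ\chi_{n-1}$ from the finiteness of $\chi_{n-1}(ri)$. This also undermines your treatment of $k=n-1$: your own (correct, and genuinely sharp) observation that $g_{n-1}(q)=q^{4n+2}f_n(1/q)$ vanishes at $ri$ cannot simply be dismissed as an extraneous root, because the recurrence forces the pole of $Y_+\circ\chi_{n-1}$ to be genuine exactly when the pole of $Y_+\circ\chi_n$ is. If instead neither has a pole, the distinguished summand carries only an order-$2o$ pole, not $3o$. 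Either way the order bookkeeping you give does not close.

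Separately, your conclusion that the $k=n+1$ summand ``does in fact carry a pole of order $o$'' contradicts the very statement you were asked to prove, which asserts analyticity of \emph{every} summand with $k\neq n$ at $ri$; at best you would have proved a weaker variant of the lemma (sufficient for non-D-finiteness, but not Lemma~\ref{lem:Ddistinctsing}). The paper's proof proceeds quite differently and never invokes composition asymptotics: it writes $\Delta_{L,n}$ as $\bigl(\overline{Y_+\circ\chi}_{n-1}-\overline{Y_+\circ\chi}_n\bigr)\big/\bigl(\overline{Y_+\circ\chi}_{n-1}\,\overline{Y_+\circ\chi}_n\bigr)$, extracts exact zero and pole polynomials for this quotient from the closed forms (the $f_n$-factor appearing to the first power among the zeroes and squared among the poles, giving the net pole order $2o-o=o$ and hence $3o$ for the summand), and then, for $k\neq n$, substitutes $q=ri$, sets $P=r^k$, factors, and runs a Gr\"obner-basis elimination against $\omega^1_n(ri)$ to show the index-$k$ pole polynomial cannot vanish at $ri$ for $|r|>1$. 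You should also not take on faith that this last step silently covers $k=n\pm1$ --- your reciprocal-pairing observation shows the neighbouring polynomials do vanish at $ri$, so the genuine-versus-extraneous question there must be settled by the multiplicity accounting in the paper's quotient identity, not by the branch-tracking argument you propose.
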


\begin{proof} Let $ri$ be the root of $\overline{\chi}_n$ described
  above.  Then using the identity
     \[ Y_+\circ\chi_n - Y_+\circ\chi_{n-1} = \frac{\overline{Y_+\circ\chi}_{n-1} - \overline{Y_+\circ\chi}_n}{\overline{Y_+\circ\chi}_{n-1}\overline{Y_+\circ\chi}_n}, \]
     we find that the zeroes of $Y_+\circ\chi_n - Y_+\circ\chi_{n-1} $ are also roots (with the same multiplicity) of the polynomial equation

{\small      \begin{equation*}
\left(q^{2n+1}+q^{n+2}-q^n+q\right)
\left(q^{2n+1}+q^n+q-q^{n+2}\right)
\left(q^{4n+2}+q^{2n+4}-4q^{2n+2}+q^{2n}+q^2\right)=0,
\end{equation*}}

     while the poles of $Y_+\circ\chi_n - Y_+\circ\chi_{n-1} $ are also roots (with
     the same multiplicity) of the polynomial equation
{\small      \begin{equation*}
  \left(q^{4n+4}+q^{2n+4}-4q^{2n+2}+q^{2n}+1\right)
  \left(q^{4n}+q^{2n+4}-4q^{2n+2}+q^{2n}+q^4\right)
  \left(q^{4n+2}+q^{2n+4}-4q^{2n+2}+q^{2n}+q^2\right)^2=0.
\end{equation*}}
     Note that the last factor of each of these polynomials is the
     factor that appears in $\omega^1_n$!  In fact, by treating $q^n$
     as an independent variable $Q$ and taking a Gr\"obner basis with
     respect to a lexicographical ordering one can show that for all
     values of $q$ not satisfying $(q^4-4q^2+1)(q^2+3q+1)(q^2-3q+1)=0$
     these are the only factors of the above polynomials that can
     share roots with $\omega^1_n$.  As
     $(q^4-4q^2+1)(q^2+3q+1)(q^2-3q+1)$ has no zeroes on the imaginary
     axis, we can see that $Y_+\circ\chi_n-Y_+\circ\chi_{n-1}$ has a pole of order
     $o$ at $ri$, which combines with the pole of $\chi_n$, which has
     order $2o$, to yield the result.

     We have already noted above that $\chi_k$ shares no poles with
     $\chi_n$ when $k \neq n$, so it is sufficient to show
     $Y_+\circ\chi_k-Y_+\circ\chi_{k-1}$ does not have a pole at $ri$.  Indeed,
     since the poles of $Y_+\circ\chi_k-Y_+\circ\chi_{k-1}$ are roots of
     \[\left(q^{4k+4}+q^{2k+4}-4q^{2k+2}+q^{2k}+1\right)   \left(q^{4k}+q^{2k+4}-4q^{2k+2}+q^{2k}+q^4\right)  \left(q^{4k+2}+q^{2k+4}-4q^{2k+2}+q^{2k}+q^2\right)^2,\]
     we can substitute $q=ri$, set $P = r^k$, and factor the result to
     see that the only possible way this polynomial can be zero for
     $|r|>1$ is if $rP^2-P-r-Pr^2=0$.  Taking a Gr\"obner basis (again
     with respect to a lexicographical ordering) of this polynomial
     with the one obtained by substituting $R=r^n$ in
     $\omega^1_n(ri)$, we get $(PR-1)(PR+1)(R-P)(R+P)$ as a generator,
     which cannot equal 0 when $|r|>1$.
   \end{proof}

\greybox{
\begin{theorem}
The generating function~$D(t)$ for walks in the quarter plane with steps from~$\mD$ is not D-finite. 
\end{theorem}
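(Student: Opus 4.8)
The plan is to mirror the proof of Theorem~\ref{thm:sym-nondfinite}: exhibit an infinite family of genuine poles of $S(q/(1+q^2))$ and then invoke the facts that a D-finite function has only finitely many singularities and that D-finiteness is preserved under the algebraic substitution $t=q/(1+q^2)$. By Equation~\eqref{eq:asymgf-q} we have $S(q/(1+q^2)) = \bigl((1+q^2)-L(1,q)-R(1,q)\bigr)/(1-K(1,1))$, and for model~$\mD$ one computes $1-K(1,1)=q(3-q)$, a fixed polynomial whose only zeros are $q=0,3$. Since the candidate singularities I produce are purely imaginary with modulus in $(1,2)$, this denominator is analytic and nonvanishing near each of them, so it suffices to show that $L(1,q)$ is singular at infinitely many such points while $R(1,q)$ stays finite there.

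First I would locate the poles of $L(1,q)$. Recalling from Equation~\eqref{eq:Sx0} that $L(1,q)=\frac{q^2}{1+q^2}\sum_{n\ge0}\chi_n(1)\Delta_{L,n}(1)$, and that the prefactor $\frac{q^2}{1+q^2}$ is analytic and nonzero at any $q=ri$ with $1<r<2$ (there $1+q^2=1-r^2\neq0$), the poles of $L(1,q)$ are exactly the poles of the series $\sum_{n\ge0}\chi_n(1)\Delta_{L,n}(1)$. Lemma~\ref{lem:Ddistinctsing} supplies, for infinitely many (even) $n$, a distinct purely imaginary $q_n=r_ni$ with $1<r_n<2$ at which the single summand $\chi_n\Delta_{L,n}$ has a pole (of order $3o\ge3$), while every other summand $\chi_k\Delta_{L,k}$, $k\neq n$, is analytic. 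Because $|q_n|=r_n>1$, Proposition~\ref{prop:AsymConv} guarantees that the tail $\sum_{k\neq n}\chi_k\Delta_{L,k}$ converges to a finite, analytic value in a neighbourhood of $q_n$. Hence $L(1,q)$ has an honest pole at $q_n$, and since the $q_n$ are distinct these furnish infinitely many poles.

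It remains to rule out cancellation by $R(1,q)$, which is the technical heart of the argument. Here I would show that $R(1,q)$ is analytic at each $q_n$: by Proposition~\ref{prop:AsymConv} (again using $|q_n|>1$) this reduces to checking that none of the $\Upsilon$-side functions $\Upsilon_k$ and $X_+\circ\Upsilon_k$ has a pole at $q_n$, that is, that the imaginary point $q_n$ is not a root of $\omega^3_k$ or $\omega^4_k$ for any $k$. This can be carried out exactly as in the distinctness computations above: substitute $q=ri$, treat $r^n$ and $r^k$ as independent variables, and use a Gr\"obner basis with respect to a lexicographic ordering to verify that the roots of the $\Upsilon$-polynomials on the imaginary axis cannot coincide with the $\chi_n$-poles $q_n$ when $|r|>1$. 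With $R(1,q)$ finite and the denominator nonvanishing at each $q_n$, the pole of $L(1,q)$ survives in $S(q/(1+q^2))$, so this function has infinitely many singularities and is not D-finite, whence $D(t)=S(t)$ is not D-finite by closure under algebraic substitution. The main obstacle is precisely this last non-cancellation step: ensuring that the $\Upsilon$-branch contributes no pole (or at worst a pole of strictly smaller order, unable to cancel the order-$3o$ pole coming from $L$) at the imaginary points $q_n$.
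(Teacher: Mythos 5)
Your proposal is correct and follows essentially the same route as the paper's own proof: Lemma~\ref{lem:Ddistinctsing} supplies infinitely many distinct purely imaginary poles of single summands of $L(1,q)$, Proposition~\ref{prop:AsymConv} shows the remaining tail is convergent so the pole survives in $L(1,q)$, Gr\"obner-basis computations on $\omega^3_n$ and $\omega^4_n$ at $q=ri$ show $R(1,q)$ is analytic at these points, and closure of D-finite functions under algebraic substitution finishes the argument. The only (harmless) differences are cosmetic: the paper verifies outright that the $\Upsilon$-side polynomials have \emph{no} purely imaginary roots with $r>1$ (its Gr\"obner bases contain $r^5(r-1)(r+1)(r^2+1)^6$ and $r^5(r^2+1)^6$), which is slightly simpler than your proposed joint noncoincidence computation, and your denominator $1-K(1,1)=q(3-q)$ follows Equation~\eqref{eq:asymgf-q} literally while the paper's proof writes $q^2-3q+1$ --- in either case the denominator is nonvanishing at the relevant imaginary points, as you correctly checked.
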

}
\begin{proof} First, recall that $D(q/(1+q^2)) = \frac{(1+q^2) -
    L(1,q) - R(1,q)}{q^2-3q+1}.$ The proof is similar to the symmetric
  case, except we restrict our attention to poles located on the
  imaginary axis. For each~$n$, Lemma~\ref{lem:Ddistinctsing}
  describes $q_n$, a purely imaginary pole of the $n$-th summand in
  $L(1,q)$. We show that it is also a pole of $L(1,q)$, and then
  verify that it is not a pole of $R(1,q)$. Consequently it is not
  cancelled in the full expression, and is also a pole of
  $D(q/(1+q^2))$.

  Lemma~\ref{lem:Ddistinctsing} proves that the purely imaginary poles
  of the term $\chi_n\left(Y_+\circ\chi_n-Y_+\circ\chi_{n-1}\right)$
  are not poles of any other term of the main summation in our
  expression for $D_{1,0}\left(\frac{q}{1+q^2}\right)$. The arguments
  of the symmetric case can be used almost verbatim to show that the
  poles are also poles of $L(1,q)$.

  Next, we show that none of the summands of $R(1,q)$ have purely
  imaginary poles. We consider the family of polynomials that the poles satisfy,
  and show that none of them have purely imaginary roots via a
  Gr\"obner basis computation, where we replace the $q^n$ terms by a
  single variable in order to make the computation generic. 

  Specifically, the computation is done as follows: 
  substitute $q=ri$ into $\omega^3_n$, set $R=r^n$, and take a
  Gr\"obner basis of the real and imaginary parts of the resulting
  polynomial with respect to a lexicographical ordering.  The result
  has $r^5(r-1)(r+1)(r^2+1)^6$ as an element, so $\omega^3_n$ has no
  root of the form $q=ri$ with $r>1$.  Similarly, the analogous
  Gr\"obner basis computation on the real and imaginary parts of
  $\omega^4_n(ri)$ has $r^5(r^2+1)^6$ as an element.  Thus,
  $\Upsilon^S_n$ and $X_+\circ\Upsilon^S_n$ have no poles of the form
  $q=ri$ with $r>1$, so $R(1,q)$ contains no pole located at any of the singularities of $L(1,q)$ 
  described above.

  By now we are almost on autopilot: any singularity of $L(1,q)$ not
  cancelled by $R(1,q)$ is a singularity of the complete
  expression. Since we have an infinite family of poles,
  $D(1/(1+q^2))$ is not D-finite since it has an infinite family of
  singularities on the imaginary axis. As a consequence, $D(t)$ is
  also not D-finite since it can be obtained from this function by
  algebraic substitution. 
   \end{proof}
 
 \subsubsection{Model $\mE$}
The argument to show that this model is not D-finite is identical, save for the actual location of the singularities on the imaginary axis. We highlight the differences. 
\begin{lemma} 
The function $\overline{\chi}^E_n(q)$ has a root on the imaginary axis between $i$ and $2i$. 
\end{lemma}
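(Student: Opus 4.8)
The plan is to transcribe the Model~$\mD$ argument, replacing the polynomial $\omega^1_n$ by its Model~$\mE$ counterpart from Table~\ref{tab:E-polys},
\[ \omega^1_n = q^2(q^4-q^2+1)(q^{8n}+1)+2q^2(q^4-4q^2+1)(q^{6n}+q^{2n})+(q^8-10q^6+24q^4-10q^2+1)q^{4n}, \]
whose roots contain the poles of $\chi^E_n(q)$. First I would substitute $q=ri$ with $r\in\mathbb{R}$ and write $R=r^n$; because every power of $q$ occurring is even, the result is a real-valued function of $r$, and since $\omega^1_n$ is palindromic its roots occur in reciprocal pairs $r_c,1/r_c$.

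Next I would exhibit a sign change on the segment $r\in(1,2)$. A direct evaluation gives $\omega^1_n(i)=40-24(-1)^n$, which is positive for every $n$, whereas $\omega^1_n(2i)=-84(R^8+1)-264(-1)^n(R^6+R^2)+1321R^4$; here the leading term $-84R^8$ dominates and makes the expression negative for all $n\geq 2$, independent of parity. (This is a small simplification over the Model~$\mD$ lemma, where $n$ had to be taken even.) The Intermediate Value Theorem then yields a root $r_c\in(1,2)$, and palindromy supplies its reciprocal $1/r_c\in(0,1)$.

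It remains to check that the root between $i$ and $2i$ really belongs to $\overline{\chi}^E_n$ and not to its reversed-recurrence partner $\overline{\chi}^E_{-n}$. Exactly as in the $\mD$ case, the identity $\chi^E_n(i/r)=\chi^E_{-n}(ri)$ shows that precisely one of $ir_c$ and $i/r_c$ is a pole of $\overline{\chi}^E_n$. To decide which, I would write out the numerator of $\overline{\chi}^E_n(ri)$ from the closed form in Equation~\eqref{eq:closedform}, using the Model~$\mE$ branch of $\overline{Y}_+$, and verify that it is strictly positive for $0<r<1$; this excludes $i/r_c$ and forces the pole to sit at $ir_c$ with $1<r_c<2$.

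The main obstacle will be the two sign verifications — the negativity of $\omega^1_n(2i)$ and, more delicately, the strict positivity of the numerator of $\overline{\chi}^E_n(ri)$ on the interval $(0,1)$ — since the Model~$\mE$ closed forms carry the additional $q^{6n}$ and $q^{8n}$ terms together with the bulkier square root $\sqrt{q^4-10q^2+1}$, making them heavier than their Model~$\mD$ analogues. Everything else is structurally identical to the lemma already established for $\overline{\chi}_n$ in the $\mD$ case.
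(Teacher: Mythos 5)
Your proposal is correct and its endpoint computations check out (indeed $\omega^1_n(i)=40-24(-1)^n>0$ and $\omega^1_n(2i)=-84(R^8+1)-264(-1)^n(R^6+R^2)+1321R^4<0$ once $R=2^n\geq4$), but it is not the route the paper takes: you transcribe the Model~$\mD$ template --- Intermediate Value Theorem applied to the pole polynomial $\omega^1_n(ri)$, followed by a separate branch-disambiguation via $\chi_n(i/r)=\chi_{-n}(ri)$ and positivity of the numerator of $\overline{\chi}_n(ri)$ on $(0,1)$ --- whereas for Model~$\mE$ the paper applies the Intermediate Value Theorem directly to (the numerator of) the reciprocal function itself, radical included:
\[ \overline{\chi}_n(ri) = 4R^4r^2+4r^2-4R^2r^2+R^4r^4+R^4+r^4+1+\left(r^2+1-R^4-R^4r^2\right)\sqrt{1+10r^2+r^4}, \]
which equals $8$ at $r=R=1$ and equals $33R^4+33-16R^2+(5-5R^4)\sqrt{57}<0$ at $r=2$, $R\geq2$. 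Working with $\overline{\chi}_n$ rather than $\omega^1_n$ buys the conclusion in one stroke: the root produced by the IVT automatically belongs to $\overline{\chi}_n$, so your entire third step --- the reciprocal-pair bookkeeping and the exclusion of $\overline{\chi}_{-n}$ --- simply disappears. What your route buys in exchange is a radical-free polynomial IVT and, as you note, endpoint signs valid for both parities of $n$; but that extra generality is moot, since the section only ever needs even $n$ (the paper's displayed formula above already assumes it, as $q^{2n}=(-1)^nR^2$), and your deferred positivity check, while it does go through for even $n$ (the coefficient of the radical is $(1+r^2)(1-R^4)>0$ for $0<r<1$, $R=r^n<1$, and the remaining terms are nonnegative since $4r^2-4R^2r^2\geq0$), would have to be redone with flipped signs for odd $n$, which you have not verified. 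So your argument is sound but carries more machinery than necessary; the paper's direct-IVT shortcut is exactly the simplification one would have liked in the Model~$\mD$ lemma, where the paper, like you, was forced to disambiguate between $\overline{\chi}_n$ and $\overline{\chi}_{-n}$.
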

\begin{proof}
    As before, we substitute $q=ri$ into the equation for $\overline{\chi}_n(q)$ to get
    \[ \overline{\chi}_n(ri) = 4R^4r^2+4r^2-4R^2r^2+R^4r^4+R^4+r^4+1+(r^2+1-R^4-R^4r^2)\sqrt{1+10r^2+r^4},\]
    where $R=r^n$.  If we substitute $r=R=1$ into this expression, we get the value 8.  If we substitute $r=2$, we get the expression $33R^4+33-16R^2+(5-5R^4)\sqrt{57}$ which is negative for $R\geq2$.  Thus, the Intermediate Value Theorem implies that $\overline{\chi}_n(ri)$ has a root for $r \in (1,2)$.
    \end{proof}
As was the case with Model~\mD, Descartes's Rule of Signs allows us to
conclude this root is unique as
{\small\begin{equation*}
      \overline{\chi}_n(ri) =
    -r^2(1+r^2+r^4)R^8-2r^2(1+4r^2+r^4)R^6+(1+10r^2+24r^4+10r^6+r^8)R^4-2r^2(1+4r^2+r^4)R^2-r^2(1+r^2+r^4) 
\end{equation*}
}
    has two sign changes in its coefficients when viewed as a polynomial in $R$.  Now, we prove that the other functions under consideration have no imaginary poles.
    \begin{lemma} The functions $Y_+\circ\chi_n(q)$, $\Upsilon_n(q)$, and $X_+\circ\Upsilon_n(q)$ have no poles $q$ on the imaginary axis. \end{lemma}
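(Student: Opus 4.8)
The plan is to follow, essentially verbatim, the computation carried out for the $\Upsilon$-functions in the Model~$\mD$ case, using the fact that the poles of $Y_+\circ\chi_n$, $\Upsilon_n$, and $X_+\circ\Upsilon_n$ are contained in the roots of the polynomials $\omega^2_n$, $\omega^3_n$, and $\omega^4_n$ recorded in Table~\ref{tab:E-polys}. Because the poles form a \emph{subset} of these root sets, it suffices to show that each of $\omega^2_n$, $\omega^3_n$, $\omega^4_n$ has no purely imaginary root off the unit circle; unlike the analysis of $\chi_n$, no companion $\chi_{-n}$ argument is needed here, since we are proving the \emph{absence} of roots rather than locating a genuine one.

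First I would substitute $q=ri$ with $r\in\mathbb{R}$ into each $\omega^j_n$ and set $R=r^n$, tracking the sign factors $i^{kn}$ (depending on $n\bmod 4$) produced by the blocks $q^{2n},q^{4n},q^{6n},q^{8n}$ and their shifts $q^{6n+2}$, exactly as in the preceding lemma. The polynomial $\omega^2_n$ is the genuinely new case relative to Model~$\mD$ (there $Y_+\circ\chi_n$ \emph{did} contribute an imaginary pole, via the factor shared with $\omega^1_n$); here it is a polynomial in $q^2$, so $\omega^2_n(ri)$ is real and can be dispatched by a direct sign/Descartes argument of the same flavour used to prove uniqueness of the $\overline{\chi}_n$-root. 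For $\omega^3_n$ and $\omega^4_n$ the odd-degree terms survive, so $\omega^j_n(ri)=A_j(r,R)+iB_j(r,R)$ with $A_j,B_j$ real, and a purely imaginary root corresponds to a common real zero of $A_j$ and $B_j$ subject to $R=r^n$.

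Next, treating $R$ as a free variable to make the argument uniform in $n$, I would compute a Gr\"obner basis of $\langle A_j,B_j\rangle$ with respect to a lexicographic order eliminating $R$, precisely mirroring the Model~$\mD$ calculation that produced $r^5(r-1)(r+1)(r^2+1)^6$ and $r^5(r^2+1)^6$ as generators. I expect each basis to contain a univariate generator in $r$ of the form $r^a(r^2+1)^b$, up to a factor $(r-1)(r+1)$, whose only real roots are $r=0$ and possibly $r=\pm1$. These correspond to $q=0$ and $q=\pm i$: the origin is not a pole (all four iterate-functions are analytic there), and $\pm i$ lie on the unit circle and so fall outside the regime $|q|\neq1$ of Proposition~\ref{prop:AsymConv}. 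Hence none of the three functions has a pole on the imaginary axis off the unit circle, which is exactly what is needed to conclude that the summand pole contributed by $\chi_n$ survives and that $R(1,q)$ is analytic there.

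The main obstacle is the genericity justification rather than any single computation: one must be certain that replacing the coupled quantity $r^n$ by an unconstrained variable $R$ cannot discard a solution, i.e.\ that the elimination-ideal generator in $r$ genuinely contains the $r$-coordinate of every actual imaginary root simultaneously for all $n$. This in turn hinges on getting the $i^{kn}$ sign bookkeeping consistent across all four power-blocks, and on verifying that the generator returned by the Groebner package factors as claimed, with no stray real root of modulus exceeding $1$. Once these checks are in place the argument is purely mechanical and, as noted, essentially identical to the $\omega^3_n$ and $\omega^4_n$ computations already performed for Model~$\mD$.
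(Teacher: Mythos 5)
Your proposal matches the paper's proof essentially verbatim: the paper also disposes of $\omega^2_n$ by substituting $q=ri$ and observing the resulting (real, since all exponents are even) expression is strictly positive for $r>1$, and handles $\omega^3_n$ and $\omega^4_n$ by taking a Gr\"obner basis of the real and imaginary parts with $R=r^n$ treated as a free variable to eliminate $R$, exactly as in the Model~$\mD$ computations you cite. Your genericity worry is also resolved the right way: freeing $R$ can only enlarge the solution set, so a univariate elimination generator with no real roots of modulus exceeding $1$ certifies the absence of imaginary poles.
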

    \begin{proof} Substituting $q=ri$ into our expression for $\omega^2_n(q)$ gives
    \begin{align*}
    \omega^2_n(ri) &=  r^8R^8+r^4+r^6R^8+r^2+r^4R^8+1+R^2+3R^2r^2+R^4+9R^4r^2+22R^4r^4 \\
                                       &+ 3R^6r^4(r^2-1) + 3R^2r^4(3R^2r^2-1) + R^6r^2(r^6-1) + R^2r^6(R^2r^2-1),
     \end{align*}
     which is strictly positive for $r>1$.  To prove that $\overline{\Upsilon}_n(q)$ and $\overline{X_+\circ\Upsilon}_n(q)$ have no roots on the imaginary axis, we substitute $q=ri$ into $\omega^3_n(q)$ and $\omega^4_n(q)$ obtaining expressions which have real and imaginary components which are non-zero polynomials in $r$ and $R$.  Taking a Gr\"obner basis of these real and imaginary components allows us to eliminate $R$ in each case and prove the result. \end{proof}

This has the immediate corollary that $R(1,q)$ admits no polar singularities at these poles of $\chi_n(q)$, of which we have found an infinite number.  Next, we show the poles do not cancel.

\begin{lemma} At each of the poles found above, $Y_+\circ\chi_n(q) - Y_+\circ\chi_{n-1}(q) \neq 0.$ \end{lemma}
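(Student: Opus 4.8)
The plan is to exploit the fact, established in the preceding lemma, that $Y_+\circ\chi_n$ has no poles on the imaginary axis; this makes the present step genuinely simpler than the corresponding one for model $\mD$, where Lemma~\ref{lem:Ddistinctsing} showed the analogous difference actually \emph{acquired} a pole at $ri$. The poles in question here are the roots $q=ri$, $1<r<2$, of $\overline{\chi}_n$, and $\overline{\chi}_n(ri)=0$ means precisely that $\chi_n$ has a pole there, i.e. $\chi_n(ri)=\infty$. Since $Y_+\circ\chi_n$ is analytic at $ri$, its value is the finite limit $\lim_{q\to ri}Y_+(\chi_n(q);q)$, and because $\chi_n(q)\to\infty$ this equals $Y_+(\infty;ri):=\lim_{w\to\infty}Y_+(w;ri)$.

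First I would carry out this limit from the explicit formula for $Y_+$ in model $\mE$. The prefactor $w/\bigl(2q(1+w+w^2)\bigr)$ decays like $1/(2qw)$, while the radical $\sqrt{q^4-2(2w^2+2w+1)q^2+1}$ grows like $2\sqrt{-q^2}\,w$; their product tends to the finite value $\mp\sqrt{-q^2}/q$ (for the branch of the root fixed by analyticity of $Y_+$ at $0$). For $q=ri$ this is $\pm i$, a number of modulus one and in particular nonzero. Thus $Y_+\circ\chi_n(ri)=\pm i\neq 0$.

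Next I would invoke the recurrence of Equation~\eqref{eq:initialsystem}, namely $\overline{Y_+\circ\chi}_n+\overline{Y_+\circ\chi}_{n-1}=(q+1/q)\overline{\chi}_n$, which holds identically in $q$. Evaluating at $q=ri$ and using $\overline{\chi}_n(ri)=0$ gives $\overline{Y_+\circ\chi}_n(ri)=-\overline{Y_+\circ\chi}_{n-1}(ri)$; taking reciprocals (legitimate, since we have just shown $Y_+\circ\chi_n(ri)\neq 0$) yields $Y_+\circ\chi_{n-1}(ri)=-Y_+\circ\chi_n(ri)$. Consequently
\[ Y_+\circ\chi_n(ri)-Y_+\circ\chi_{n-1}(ri)=2\,Y_+\circ\chi_n(ri)=\pm 2i\neq 0, \]
which is the desired conclusion; in fact it shows the residual factor in $\chi_n\bigl(Y_+\circ\chi_n-Y_+\circ\chi_{n-1}\bigr)$ is finite and nonzero, so the pole of $\chi_n$ at $ri$ survives in this summand.

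The only delicate point, and hence the step I would check most carefully, is the limit computation of $Y_+(\infty;ri)$: one must track the branch of the square root to be sure the value is $\pm i$ rather than $0$, and one must justify taking the joint limit $q\to ri$, $\chi_n(q)\to\infty$ in $Y_+(\chi_n(q);q)$. Both are routine given the explicit formulas and the established absence of an imaginary pole, but they are where the argument could go astray. Should a purely algebraic verification be preferred, the same conclusion follows by relating the zeroes of $Y_+\circ\chi_n-Y_+\circ\chi_{n-1}$ to an explicit polynomial, via the identity $Y_+\circ\chi_n-Y_+\circ\chi_{n-1}=(\overline{Y_+\circ\chi}_{n-1}-\overline{Y_+\circ\chi}_n)/(\overline{Y_+\circ\chi}_{n-1}\,\overline{Y_+\circ\chi}_n)$, and checking by a Gr\"obner basis computation in $R=r^n$ after the substitution $q=ri$ that this polynomial shares no root with $\omega^1_n$ for $1<r<2$, exactly as in the model $\mD$ case.
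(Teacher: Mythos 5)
Your proof is correct, but it takes a genuinely different route from the paper's. The paper argues purely algebraically: from the explicit expression for $\overline{Y_+\circ\chi}_n$ it derives that any zero of $Y_+\circ\chi_n-Y_+\circ\chi_{n-1}$ at $q=ri$ must satisfy $(4r^6+9r^4+4r^2)R^8-(r^8+1)R^4+4r^6+9r^4+4r^2=0$ with $R=r^n$, then eliminates $R$ against the polynomial satisfied by $\overline{\chi}_n(ri)$ via a Gr\"obner basis, obtaining the generator $r^2(9r^{16}+236r^{14}+\cdots+9)(1+r^2)^4$, which has no positive real roots --- so the zero set of the difference and the pole set of $\chi_n$ are simply disjoint. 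You instead evaluate the difference at the pole: the recurrence \eqref{eq:initialsystem} gives $\overline{Y_+\circ\chi}_n(ri)=-\overline{Y_+\circ\chi}_{n-1}(ri)$ at any zero of $\overline{\chi}_n$, and your limit computation $Y_+(\infty;ri)=\pm i$ pins the difference at the explicit nonzero value $\pm 2i$. This is exactly the mechanism the paper itself uses in the symmetric case (the step $\overline{Y}_{n+1}(q_n)=-\overline{Y}_{n-1}(q_n)+\epsilon$ in the proof of Theorem~\ref{thm:sym-nondfinite}), transplanted to model \mE; your fallback paragraph is, in substance, the paper's actual proof. What your version buys: an explicit value (so the full pole order of $\chi_n$ survives in the summand $\chi_n\Delta_{L,n}$), tighter reuse of the preceding no-imaginary-poles lemma, and no Gr\"obner computation; what the paper's buys: uniformity with the model-\mD treatment (Lemma~\ref{lem:Ddistinctsing}) and no branch bookkeeping. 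Your flagged delicate point can in fact be closed without tracking branches at all: along the path $q\to ri$, $w=\chi_n(q)\to\infty$, one has $\bigl|\sqrt{q^4-2(2w^2+2w+1)q^2+1}\bigr|\sim 2|q||w|$ whichever branch is carried, so $|Y_+\circ\chi_n(ri)|=1$ regardless, and nonvanishing is all the recurrence step needs (the preceding lemma supplying finiteness); the joint limit is legitimate precisely because that lemma makes $Y_+\circ\chi_n$ analytic at $ri$, and the recurrence may be evaluated there since it holds identically for the closed forms \eqref{eq:closedform}, all of whose terms are finite at $q=ri$.
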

\begin{proof}  Substituting $q=ri$ into our explicit expression for $\overline{Y_+\circ\chi}_n(q)$ allows us to determine that any root of $Y_+\circ\chi_n(q) - Y_+\circ\chi_{n-1}(q)$ must satisfy the polynomial equation
    \[ (4r^6+9r^4+4r^2)R^8-(r^8+1)R^4+4r^6+9r^4+4r^2=0. \]
    The Gr\"obner Basis of this polynomial and the one we found for $\overline{\chi}_n(ri)$ with respect to a lexicographical ordering has
    \[ r^2(9r^{16}+236r^{14}+2148r^{12}+7684r^{10}+11974r^8+7684r^6+2148r^4+236r^2+9)(1+r^2)^4 \]
    as one of its generators.  This has no positive real roots in~$r$, so the result holds.
    \end{proof}
This allows us to conclude our study of Model~\mE~with the
    following theorem.

\greybox{
\begin{theorem}
The generating function~$E(t)$ for walks in the quarter plane with steps from $\mE$ is not D-finite. 
\end{theorem}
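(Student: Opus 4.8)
The plan is to reproduce the Model~$\mD$ argument almost verbatim, now anchored at the purely imaginary poles of $\chi_n$ isolated in the three lemmas immediately above. First I would recall, exactly as in the Model~$\mD$ proof, that
\[ E\!\left(\frac{q}{1+q^2}\right) = \frac{(1+q^2) - L(1,q) - R(1,q)}{q^2 - 4q + 1}, \]
whose denominator has the real roots $2\pm\sqrt3$ and hence does not vanish anywhere on the imaginary axis. Thus a singularity of the numerator on the segment $\{ri : 1 < r < 2\}$ cannot be cancelled by the denominator, and it suffices to exhibit infinitely many such numerator singularities.

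Next I would fix the family of candidate poles. The lemma producing a root $r_n i$ of $\overline{\chi}_n$ with $1 < r_n < 2$, together with the Descartes's-Rule-of-Signs computation showing this root is unique and (as in Model~$\mD$) that distinct $n$ yield distinct values $(r_n i)^n$, supplies for infinitely many $n$ a purely imaginary pole $q_n = r_n i$ of $\chi_n$ that is a pole of no other $\chi_k$. Writing $L(1,q) = \frac{q}{1+q^2}\sum_{n\geq 0}\chi_n(1)\,\Delta_{L,n}(1)$ as in Equation~\eqref{eq:Sx0}, I would localize this sum at $q_n$. Since $Y_+\circ\chi_n$ has no pole on the imaginary axis, the difference $\Delta_{L,n} = Y_+\circ\chi_n - Y_+\circ\chi_{n-1}$ is analytic at $q_n$; the lemma giving $\Delta_{L,n}(q_n)\neq 0$ then forces $\chi_n\Delta_{L,n}$ to keep the full pole order of $\chi_n$ at $q_n$. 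For every $k\neq n$ the summand $\chi_k\Delta_{L,k}$ is analytic at $q_n$ --- the $\chi_k$ are pole-free there by distinctness, and $\Delta_{L,k}$ is pole-free there because $Y_+\circ\chi_k$ has no imaginary poles --- while the infinite tail converges locally uniformly near $q_n$ by the ratio-test estimate of Proposition~\ref{prop:AsymConv}, exactly as in the proof of Proposition~\ref{thm:converge}. Hence $L(1,q)$ has a genuine pole at $q_n$.

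It then remains to rule out cancellation from the companion series $R(1,q)$. Because $\Upsilon_n$ and $X_+\circ\Upsilon_n$ have no poles on the imaginary axis (the corollary drawn from the preceding lemma), $R(1,q)$ is analytic at each $q_n$ and so cannot annihilate the pole of $L(1,q)$ in the numerator. Combining these observations, $E(q/(1+q^2))$ has a pole at each $q_n$; as the $q_n$ are distinct for infinitely many $n$, this function possesses infinitely many singularities and is therefore not D-finite. Closure of the D-finite class under the algebraic substitution $t = q/(1+q^2)$ finally yields that $E(t)$ itself is not D-finite.

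The main obstacle, precisely as in Model~$\mD$, is the delicate bookkeeping that guarantees the pole of the $n$-th summand actually survives in $L(1,q)$: one must simultaneously control that $\Delta_{L,n}$ neither vanishes nor introduces its own pole at $q_n$, that every other summand is analytic there, and that the infinite tail still converges at a point where the $n$-th term blows up. All three ingredients have already been isolated in the lemmas above, so the residual task is to assemble them and track pole orders carefully rather than to carry out any genuinely new computation.
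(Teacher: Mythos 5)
Your proposal is correct and follows essentially the same route as the paper, which proves the Model~$\mE$ theorem by declaring the argument identical to the Model~$\mD$ proof and supplying exactly the lemmas you invoke: the root of $\overline{\chi}_n$ on the segment between $i$ and $2i$, its uniqueness via Descartes's Rule of Signs, the absence of imaginary-axis poles for $Y_+\circ\chi_n$, $\Upsilon_n$, and $X_+\circ\Upsilon_n$ (so $R(1,q)$ cannot cancel), and the non-vanishing of $Y_+\circ\chi_n-Y_+\circ\chi_{n-1}$ at these poles. You even correctly track the one structural difference from Model~$\mD$ (there $\Delta_{L,n}$ contributes additional pole order, whereas for $\mE$ it is analytic and nonzero at $q_n$), so your assembly matches the paper's intended proof.
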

}

\section{Conclusion}
This work addresses a family of lattice path models that have resisted
other powerful approaches.  There could also be other models, with
larger step sizes or in higher dimensions, to which this method is
suitable. In three dimensions the challenge is to set up the equations
in such a way that unknowns are canceled at the same rate in which
they are generated.  Ideally, we would like to automate as much as is
possible.

In this model, the connection between the infiniteness of the group
and the infinite number of singularities is quite transparent. Is
there any hope to transport this concept to the remaining small step
models in the quarter plane, and show that their counting functions
are also not D-finite (in addition to the provably non-D-finite
multivariate generatingfunction)?

\section{Acknowledgments}
We are indebted to Alin Bostan, Mireille Bousquet-M{\'e}lou, Manuel Kauers, 
Pierre Lairez, Kilian Raschel, and Andrew Rechnitzer for some key
discussions and insights. We are also grateful to the referees for their thoughtful comments and remarks. 


\begin{thebibliography}{10}

\bibitem{BaFl02}
Cyril Banderier and Philippe Flajolet.
\newblock Basic analytic combinatorics of directed lattice paths.
\newblock {\em Theoret. Comput. Sci.}, 281(1-2):37--80, 2002.
\newblock Selected papers in honour of Maurice Nivat.

\bibitem{BeKaWe78}
S.~Beraha, J.~Kahane, and N.~Weiss.
\newblock Limits of zeros of recursively defined families of polynomials.
\newblock {\em Advances in Math., Supplementary Studies}, Studies in
  Foundations and Combinatorics(1):213 –-- 232, 1978.

\bibitem{BoKa09}
Alin Bostan and Manuel Kauers.
\newblock Automatic classification of restricted lattice walks.
\newblock In {\em 21st {I}nternational {C}onference on {F}ormal {P}ower
  {S}eries and {A}lgebraic {C}ombinatorics ({FPSAC} 2009)}, Discrete Math.
  Theor. Comput. Sci. Proc., AK, pages 201--215. Assoc. Discrete Math. Theor.
  Comput. Sci., Nancy, 2009.

\bibitem{BoRaSa13}
Alin Bostan, Kilian Raschel, and Bruno Salvy.
\newblock Non-{D}-finite excursions in the quarter plane.
\newblock {\em J. Combin. Theory Ser. A}, 2013.
\newblock in press.

\bibitem{Bous05}
Mireille Bouquet-M\'elou.
\newblock Walks in the quarter plane: Kreweras' algebraic model.
\newblock {\em Annals of Applied Probability}, 15(2):2005, 2005.

\bibitem{BoMi10}
Mireille Bousquet-M{\'e}lou and Marni Mishna.
\newblock Walks with small steps in the quarter plane.
\newblock In {\em Algorithmic probability and combinatorics}, volume 520 of
  {\em Contemp. Math.}, pages 1--39. Amer. Math. Soc., Providence, RI, 2010.

\bibitem{BoPe03}
Mireille Bousquet-M{\'e}lou and Marko Petkov{\v{s}}ek.
\newblock Walks confined in a quadrant are not always {D}-finite.
\newblock {\em Theoret. Comput. Sci.}, 307(2):257--276, 2003.
\newblock Random generation of combinatorial objects and bijective
  combinatorics.

\bibitem{Chri90}
Gilles Christol.
\newblock Globally bounded solutions of differential equations.
\newblock In {\em Analytic number theory ({T}okyo, 1988)}, volume 1434 of {\em
  Lecture Notes in Math.}, pages 45--64. Springer, Berlin, 1990.

\bibitem{FaIaMa99}
G.~Fayolle, R.~Iasnogorodski, and V.A. Malyshev.
\newblock {\em Random walks in the quarter-plane: algebraic methods, boundary
  value problems and applications}.
\newblock Applications of mathematics. Springer, 1999.

\bibitem{FaRa11}
G.~Fayolle and K.~Raschel.
\newblock Random walks in the quarter-plane with zero drift: an explicit
  criterion for the finiteness of the associated group.
\newblock {\em Markov Process. Related Fields}, 17(4):619--636, 2011.

\bibitem{FaRa10}
Guy Fayolle and Kilian Raschel.
\newblock On the holonomy or algebraicity of generating functions counting
  lattice walks in the quarter-plane.
\newblock {\em Markov Process. Related Fields}, 16(3):485--496, 2010.

\bibitem{FaRa12}
Guy Fayolle and Kilian Raschel.
\newblock Some exact asymptotics in the couting of walks in the quarter plane.
\newblock In {\em AofA}, pages 109 -- 124. Discrete Mathematics and Theoretical
  Computer Science, 2012.

\bibitem{FlGeSa04}
Philippe Flajolet, Stefan Gerhold, and Bruno Salvy.
\newblock On the non-holonomic character of logarithms, powers, and the {$n$}th
  prime function.
\newblock {\em Electron. J. Combin.}, 11(2):Article 2, 16, 2004/06.

\bibitem{FlSe09}
Philippe Flajolet and Robert Sedgewick.
\newblock {\em Analytic combinatorics}.
\newblock Cambridge University Press, Cambridge, 2009.

\bibitem{JaPrRe08}
E.~J. Janse~van Rensburg, T.~Prellberg, and A.~Rechnitzer.
\newblock Partially directed paths in a wedge.
\newblock {\em J. Combin. Theory Ser. A}, 115(4):623--650, 2008.

\bibitem{KaKoZe09}
Manuel Kauers, Christoph Koutschan, and Doron Zeilberger.
\newblock Proof of {I}ra {G}essel's lattice path conjecture.
\newblock {\em Proc. Natl. Acad. Sci. USA}, 106(28):11502--11505, 2009.

\bibitem{KoMa04}
John Konvalina and Valentin Matache.
\newblock Palindrome-polynomials with roots on the unit circle.
\newblock {\em C. R. Math. Acad. Sci. Soc. R. Can.}, 26(2):39--44, 2004.

\bibitem{KuRa11}
Irina Kurkova and Kilian Raschel.
\newblock Explicit expression for the generating function counting {G}essel's
  walks.
\newblock {\em Adv. in Appl. Math.}, 47(3):414--433, 2011.

\bibitem{MiRe09}
Marni Mishna and Andrew Rechnitzer.
\newblock Two non-holonomic lattice walks in the quarter plane.
\newblock {\em Theoret. Comput. Sci.}, 410(38-40):3616--3630, 2009.

\bibitem{OEIS}
OEIS Foundation Inc. 
\newblock The On-Line Encyclopedia of Integer Sequences.
\newblock http://oeis.org, 2011.
\end{thebibliography}

\end{document}